\newcommand{\bbN}{\mathbb{N}}
\newcommand{\bbZ}{\mathbb{Z}}
\newcommand{\bbQ}{\mathbb{Q}}
\newcommand{\bbR}{\mathbb{R}}
\newcommand{\Pc}{\mathcal{P}}
\newcommand{\Qc}{\mathcal{Q}}
\newcommand{\Lc}{\mathcal{L}}
\newcommand{\Hc}{\mathcal{H}}
\newcommand{\Gc}{\mathcal{G}}
\newcommand{\Fc}{\mathcal{F}}
\newcommand{\Dc}{\mathcal{D}}
\newcommand{\Cc}{\mathcal{C}}
\newcommand{\Uc}{\mathcal{U}}
\theoremstyle{plain}
\newtheorem{main-theorem}{Theorem}
\newtheorem{theo}{Theorem}[section]
\newtheorem{lemm}[theo]{Lemma}
\newtheorem{deff}[theo]{Definition}
\newtheorem{exam}[theo]{Example}
\newtheorem{rema}[theo]{Remark}
\newtheorem*{clai-nn}{Claim}
\newtheorem*{theorem*}{Theorem}
\newtheorem*{remark*}{Remark}
\newtheorem*{question*}{Question}
\newtheorem*{schoencond*}{Sch\"onflies Condition}
\begin{document}
\begin{CJK}{GBK}{kai}
\title{A Classification of Fractal Squares}
\date{}
 \author[1]{Gregory Conner}
\author[2]{Curtis Kent}
\author[3]{Jun Luo}
\author[4]{Yi Yang}


\affil[1]{Department of Mathematics,    Brigham Young University, Provo, UT 84602, USA \newline conner@mathematics.byu.edu}

\affil[2]{Department of Mathematics,    Brigham Young University, Provo, UT 84602, USA \newline curtkent@mathematics.byu.edu}

\affil[3]{School of Mathematics,     Sun Yat-sen University, Guangzhou 510275, China \newline luojun3@mail.sysu.edu.cn}

\affil[4]{School of Mathematics (Zhuhai),     Sun Yat-sen University, Zhuhai 519082, China\newline yangy699@mail.sysu.edu.cn {\rm(corresponding author)} }
\maketitle
\begin{abstract}
Let  $\lambda_K:\bbR^2\rightarrow\{0,1,\ldots\}\cup\{\infty\}$ be the lambda function of a planar comapctum $K$, as defined in MR4488162. It is known that a planar continuum  is locally connected if and only if its lambda function vanishes everywhere, or equivalently, $\lambda_K(K)=\{0\}$. In this article we show  that every fractal square $K$ satisfies  $\lambda_K(K)\subset\{0,1\}$ and find criterions to classify when  $\lambda_K(K)$  equals $\{0\}$, $\{1\}$ or $\{0,1\}$.  Here for any integer  $N\ge2$ and  any set $\Dc=\left\{(i,j): 0\le i,j\le N-1\right\}$ with cardinality $\ge2$, if we set $K^{(0)}=[0,1]^2$ and 
$\displaystyle K^{(n)}=\left\{\frac{x+d}{N}: x\in K^{(n-1)}, d\in\Dc\right\}(n\ge1)$ then   $K=\bigcap_nK^{(n)}$ is called a fractal square. 
\end{abstract}

{\footnotesize \tableofcontents }
\newpage
\section{Introduction and Main Results}

Let $K$ be a planar compactum with locally connected components and without isolated points.  In three cases,  $K$ is well understood. First, if it is connected, $K$ is a Peano continuum; second,  if all components are points, $K$ is a Cantor set; third, $K$ is the product of $[0,1]$ and a Cantor set. Such  $K$ may arise as an invariant set  of certain dynamical system, like rational  Julia sets or  self-affine sets.  The first category includes Julia sets  of hyperbolic rational maps $f$. In such a case, every component of the Julia set is locally connected \cite[Corollary 1.3]{PilgrimTan00}; moreover, with the possible exception of finitely many periodic components $J_0$ satisfying $f^p(J_0)=J_0$ for some $p\ge1$ and their preimages, every other component of the Julia set is either a single point or a Jordan curve \cite[Theorem 1.2]{PilgrimTan00}.  
The second category includes fractal squares. By  \cite[Theorem 2]{LRX-2022}, every component of a fractal square is locally connected.  

An issue of interest is to classify such  $K$ from certain viewpoints.  We pick a topological point of view and use the lambda function $\lambda_K:\bbR^2\rightarrow\bbN\cup\{\infty\}$ introduced in \cite{FLY-2022}. See also Definition \ref{def:lambda}. Notice that,  if  $x\notin K$ then $\lambda_K(x)=0$. This means that a single point off $K$ is irrelevant to the topology of $K$. So the key is to investigate the set $\lambda_K(K)$. When $K$ is a continuum, $\lambda_K(K)=\{0\}$ if and only if $K$ is a Peano continuum. When $K$ is the product of $[0,1]$ with a Cantor set, $\lambda_K(K)=\{1\}$. When $K$ is disconnected, for $\lambda_K(K)$ to equal $\{0\}$ it is necessary and sufficient that every component of $K$ is a Peano continuum and for any constant $C>0$ at most finitely many components of $K$ are of diameter  $>C$. To illustrate how large the set $\lambda_K(K)$ can become, we refer to Examples \ref{key_example_2} and \ref{key_example} for the contruction of  simple planar compacta $K$ with locally connected components and without isolated points such that $\lambda_K(K)=\{0,1,\ldots,n\}$ for some $n\ge2$. 

The main goal of this paper is to show that if $K$ is a fractal square $\lambda_K(K)$ equals $\{0\}$, $\{1\}$, or $\{0,1\}$. On the one hand, each of these cases is possible. On the other, we can find criterions to classify which case happens for a concrete fractal square. For the moment, hyperbolic rational Julia sets are untouched hereafter, although we think that  similar results still hold for them. 

A fractal square may be defined as follows.
Given $N\ge2$ and $\Dc\subset\{0,1,\ldots,N-1\}^2$ with  cardinality $\#\Dc\ge2$, we obtain a finite family  $\Fc_\Dc=\Big\{f_d: d\in\Dc\Big\}$, where each $\displaystyle f_d(x)=\frac{x+d}{N}$ is a contraction of ration $\frac1N$. Following Hutchison \cite{Hutchinson81}, we call $\Fc_\Dc$  an {\bf iterated function system} (shortly,  IFS).
It is known that there is a unique nonempty compactum $K=K(N,\Dc)$ satisfying the set equation $K=\bigcup_{d\in\Dc}f_d(K)$. See \cite[p.124, Theorem 9.1]{Falconer90} or \cite[$\S3.1$]{Hutchinson81}.

\begin{deff}\label{def:FS}
We call $K=K(N,\Dc)$ a {\bf fractal square} of order $N$ and $\Dc$ the {\bf digit set}. 
\end{deff}

A fractal square is a  self-similar set satisfying the open set condition \cite[pp.128-129]{Falconer90}. 
 There are many interesting results about fractal squares  from the literature. See \cite{DaiLuoRuanWang_23, Lau-Luo-Rao13, LRX-2022, Luo-Liu16, Rao-Wang-Wen17,Ruan-Wang17,RuanWangXiao_23,Xiao21} for recent articles that focus on topological properties. See also the survey \cite{Akiyama-Thuswaldner04} and the references therein.

Given a  fractal square $K$,   all its components are locally connected. Such a component may be a point, a line segment, a dendrite which then has a trivial  fundamental group, a Peano conitnuum with non-trivial fundamental group or one with no local cut point, which is then homeomorphic to Sierp\'nski's universal plane curve \cite[Theorem 3]{Whyburn58}.
 The diversities demonstrated in the topology of $K$  have made the study of fractal squares  well worth the effort.

On the other hand, the topology of $K$ also appears to be restricted in one way or another. For instance, by the discussions in \cite{Xiao21} one can infer two simple dichotomies. The first one states that $K$  has either finitely or uncountably many components. The second says that   $K$ has either  none or uncountably many point components.  Actually, if $K$ has both point components and non-degenerate ones,  the non-degenerate components consitute a set, say $K_c$, whose Hausdorff dimension $\dim_HK_c$ is strictly smaller than that  of $K$ \cite[Theorem 1.1]{Huang-Rao21}.

The topology of fractal squares is also interesting from a dynamic point of view, since fractal squares $K$ are often related to the action on $\bbR^2$ of certain transformation semi-groups. This viewpoint was taken in \cite{Lau-Luo-Rao13}, though the authors did not make explicit statements concerning such dynamical systems. To illustrate that, we address the necessary settings below.

Given  $K=K(N,\Dc)$, one may consider  $\bbZ^2$ as a group of translations $x\mapsto x+d$ acting on $\bbR^2$, with $d\in\bbZ^2$,  and look at the semi-group $\mathcal{G}_N$  generated by  group $\bbZ^2$ and the homothety $\psi_N(x)=Nx$. The action of this semi-group on $\bbR^2$ gives rise to a discrete dynamical system.  Thus a set $X\subset\bbR^2$ is said to be {\em invariant under $\mathcal{G}_N$} if $g(X)\subset X$ for all $g\in\mathcal{G}_N$. In particular,  $H=K+\bbZ^2=\{x+d: x\in K, d\in\bbZ^2\}$  is invariant under $\mathcal{G}_N$, since $H=H+\bbZ^2$ and $\psi_N(H)\subset H$.
In what follows, we set $K^{(0)}=[0,1]^2$. Following \cite{Lau-Luo-Rao13},  for $n\ge1$ we further set
\begin{equation}\label{eq:approx}
\displaystyle K^{(n)}=\bigcup_{d\in\Dc}\frac{K^{(n-1)}+d}{N}.
\end{equation}
Then $K^{(n)}=\bigcup_{d\in\Dc_n}\frac{[0,1]^2+d}{N^n}$, where $\Dc_1=\Dc$ and 
\begin{equation}\label{eq:D_n}
\Dc_{n}=\Dc_1+N\Dc_{n-1}=\left\{\sum_{i=0}^{n-1}N^id_i:\ d_i\in\Dc\right\}.
\end{equation}

\begin{deff}\label{def:approx}
Call $K^{(n)}$ the {\bf $n$-th approximation} of $K=K(N,\Dc)$. 
\end{deff}

\begin{rema}
To emphasize  $N$ and $\Dc$,  we  often write $K^{(n)}(N,\Dc)$ instead of $K^{(n)}$ and $H(N,\Dc)$ instead of $H$.
As $\left\{K^{(n)}: n\ge1\right\}$ is a decreasing sequence with $K=\bigcap_nK^{(n)}$, one may reveal certain properties of $K$  by analyzing  $K^{(n)}$. For instance, if  the number $\beta_0\Big(K^{(2)}\Big)$ of components of $K^{(3)}$ equals that of $K^{(2)}$, then $\beta_0(K)=\beta_0\left(K^{(3)}\right)$. See Theorem \ref{theo:finite_tau_extended}. 
Here $\beta_0(\cdot)$ usually  denotes the number of path components. Since every component of a fractal square $K$ is locally connected (hence path connected), we are safe to denote by $\beta_0(K)$ the number of components of $K$.
\end{rema}

To facilitate our discussions, we introduce more terminology below.
Pick an infinite line $L$, let $\overline{{\rm Orb}(L)}$ be the closure of 
${\rm Orb}(L)=\left\{g(L): g\in\mathcal{G}_N\right\}$,
the orbit of $L$ under  $\mathcal{G}_N$.  Let 
\begin{equation}\label{eq:half-open}
\Dc(L)=\left\{d\in\{0,1,\ldots,N-1\}^2: \ 
\frac{[0,1)^2+d}{N}\ \bigcap\ \overline{{\rm Orb}(L)}\ne\emptyset\right\}.
\end{equation}
Instead of using just one line $L$, one may start from a closed set $\Lc$ which consists of two or more or even infinitely many lines, of the same slope or of different slopes, and identify the set $\Dc(\Lc)$ in a similar way. 
Then $\Lc\mapsto\Dc(\Lc)$ is  well-defined and satisfies $\Dc(\Lc)\subset\{0,1,\ldots,N-1\}^2$. 

\begin{deff}\label{def:digit_operator}
Call $\Lc\mapsto\Dc(\Lc)$ the {\bf digit operator}.
\end{deff}

 For any closed set $X$ invariant under $\mathcal{G}_N$, let $\frak{L}(X)$ be the union of  all the infinite lines lying in $X$.  
Then $\Dc\mapsto\frak{L}\Big(H(N,\Dc)\Big)$ is well-defined. So is the composite  
$\Lc\mapsto\frak{L}\Big(K(N,\Dc(\Lc))\Big)$, whose output  is a closed set invariant under $\mathcal{G}_N$ containing $\overline{{\rm Orb}(\Lc)}$. Moreover, for a totally disconnected fractal square $K(N,\Dc)$ there is no set $\Lc$ (consisting of infinite lines) with 
$\Dc=\Dc(\Lc)$. There are other  $K(N,\Dc)$, which are not totally disconnected, for which no $\Lc$ with $\Dc=\Dc(\Lc)$ can be found. See $K(5,\Dc_0)$ in Example \ref{exmp:slope_1} for such fractal square. In Examples \ref{exmp:no_line_dendrite} and \ref{exmp:no_line} we give two fractal squares, one is a dendrite and the other is not. Neither of them contains a line segment. Therefore, their digit sets cannot be obtained by using the digit operator $\Lc\mapsto\Dc(\Lc)$.
Here the containment  $\frak{L}\Big(H(N,\Dc(\Lc))\Big)\supset \overline{{\rm Orb}(\Lc)}$ may be proper. This is the case, if we set $N=3$ and $\Lc=L_1\cup L_2$, where $L_1:x_2=\frac{x_1}{2}$ and $L_2:x_2=\frac23$, since $\Dc(\Lc)=\{0,1,2\}^2\setminus\{(1,1)\}$ and $\frak{L}\Big(H(N,\Dc(\Lc))\Big)$ contains all the vertical (or horizontal) lines that cross $(\omega+i,0)\ \text{or}\ (0,\omega+i)$ with $i\in\bbZ$ and $\omega$ belonging to Cantor's ternary set.

\begin{exam}\label{exmp:slope_1}
Fix $N=5$. Let $L_1: x_2=x_1$, $L_2: x_2=x_1-\frac15$ and $L_3: x_2=x_1+\frac15$ be infinite lines of slope $1$ with intercepts $0,-\frac15$ and $\frac15$. Setting $\Dc_1=\Dc(L_1)$, $\Dc_3=\Dc(L_1\cup L_2)$, $\Dc_3=\Dc(L_1\cup L_2\cup L_3)$ and $\Dc_0=\Dc_2\setminus\{(0,4)\}$.  Figure \ref{PC_or_NOT} illustrates $K^{(n)}(5,\Dc_i)(n=2,4)$ for $0\le i\le3$. 
\begin{figure}[ht] \begin{center} 
\begin{tabular}{cccc}
\includegraphics[width=3.5cm]{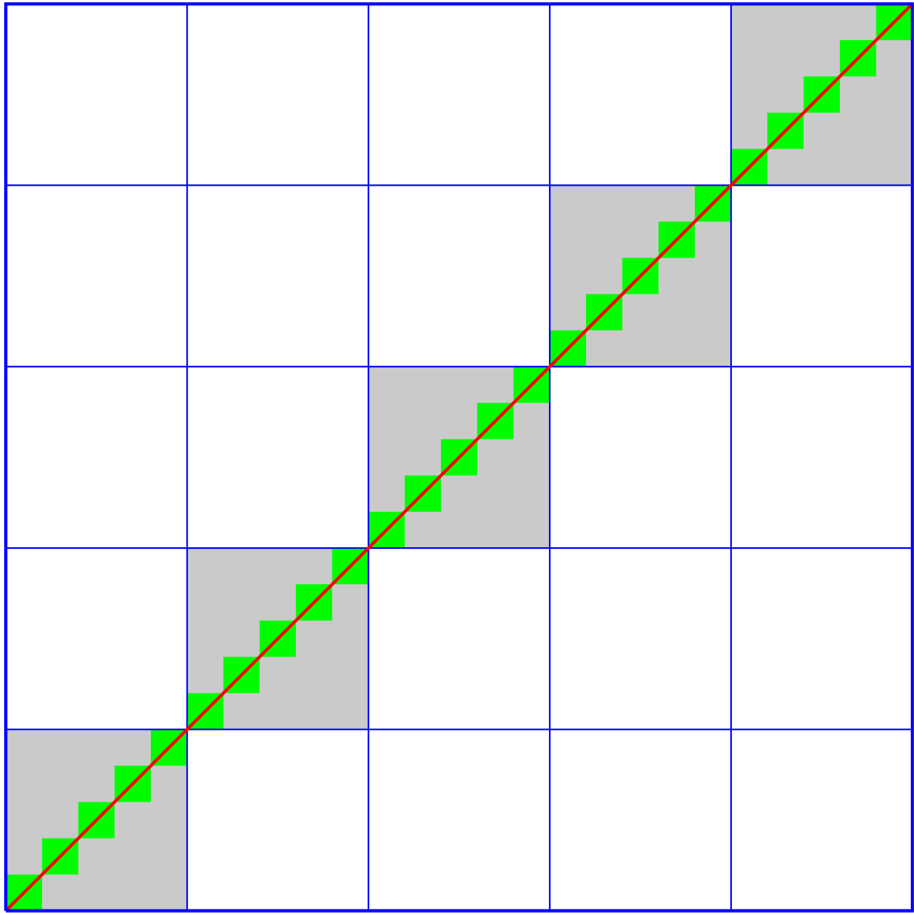} &
 \includegraphics[width=3.5cm]{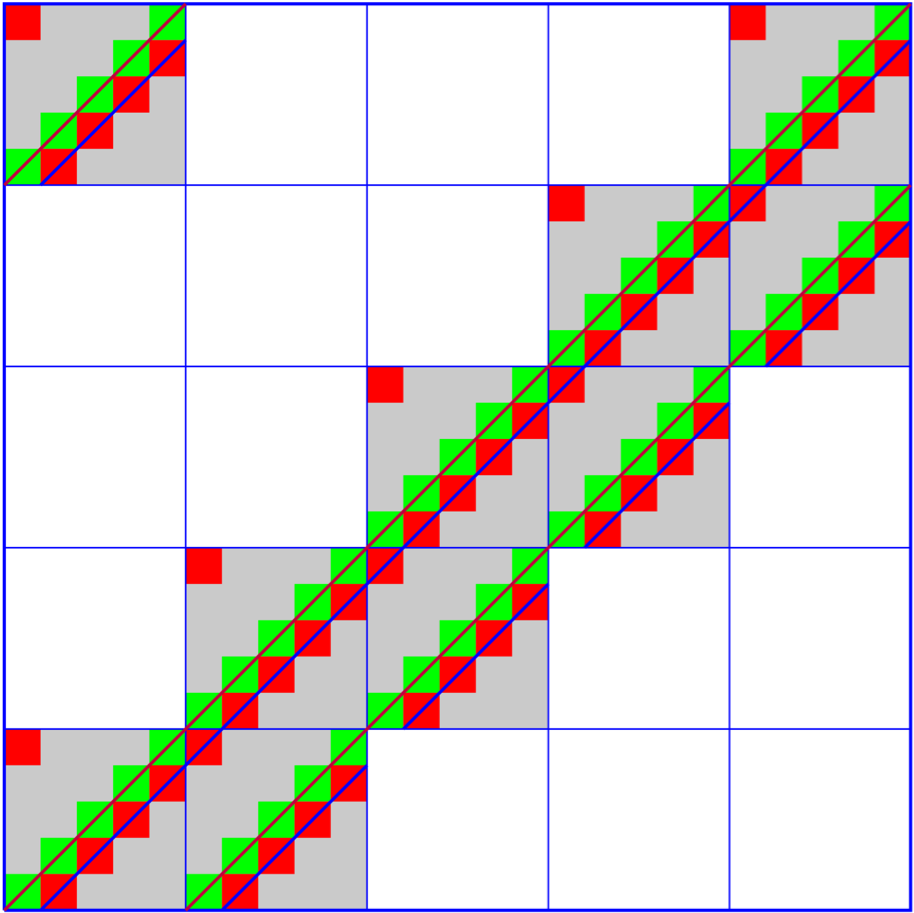}&
 \includegraphics[width=3.5cm]{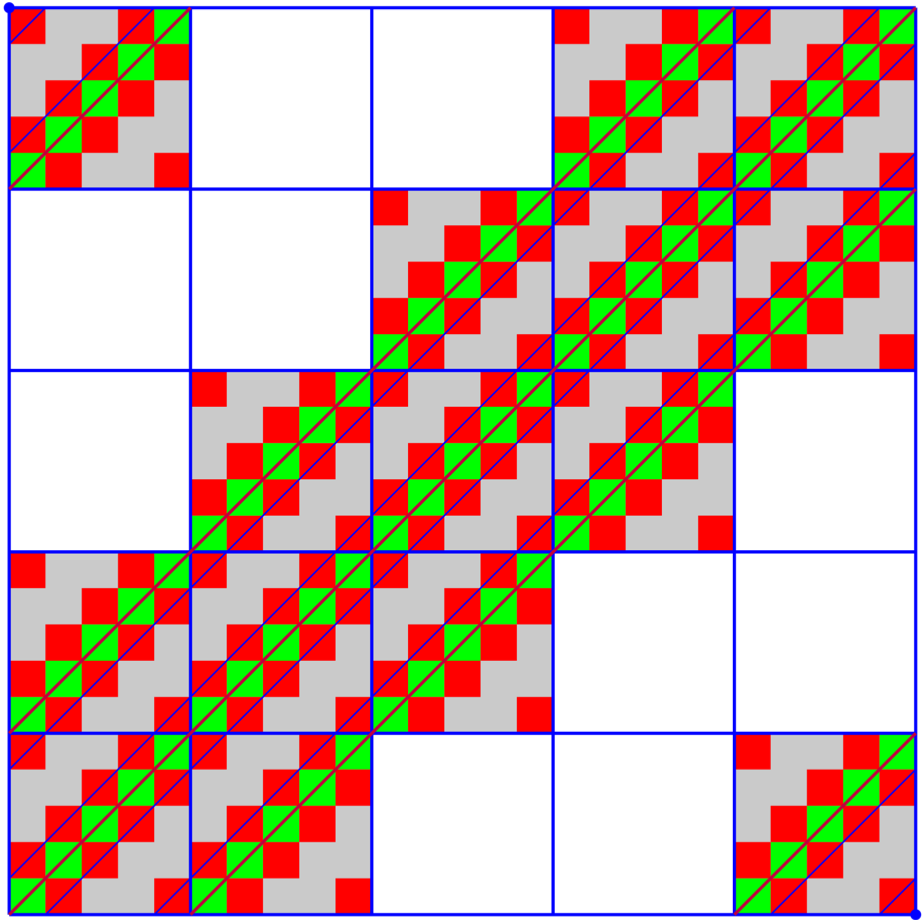} &
 \includegraphics[width=3.5cm]{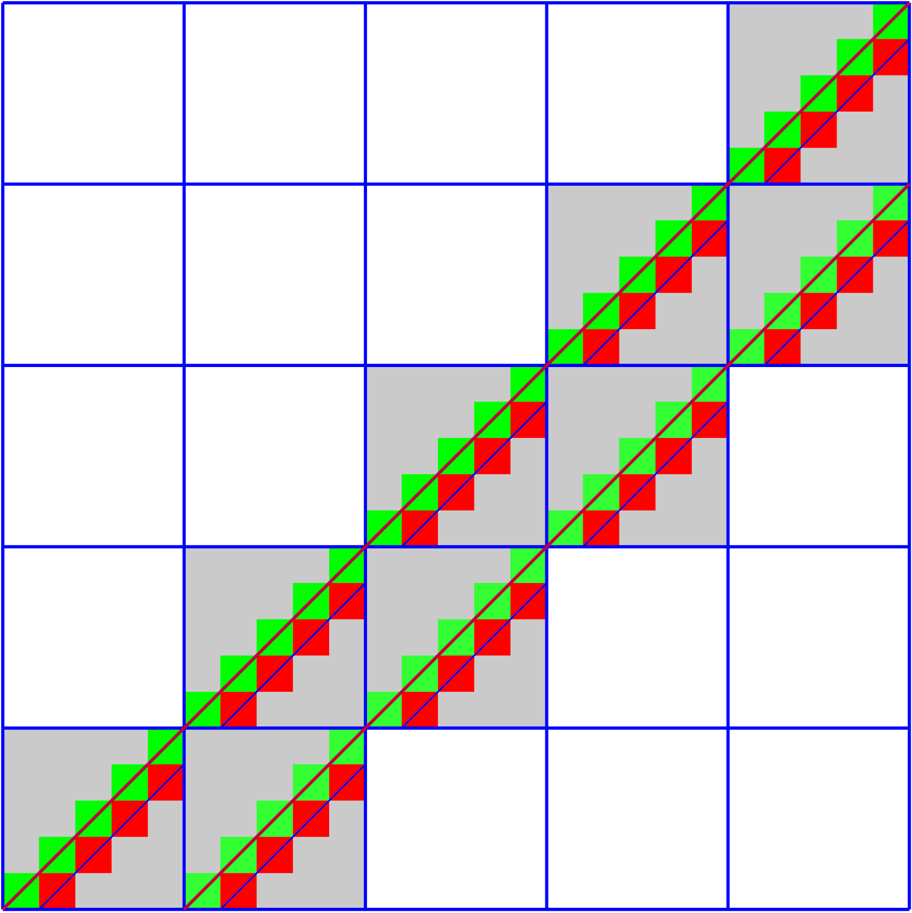}\\
 \includegraphics[width=3.5cm]{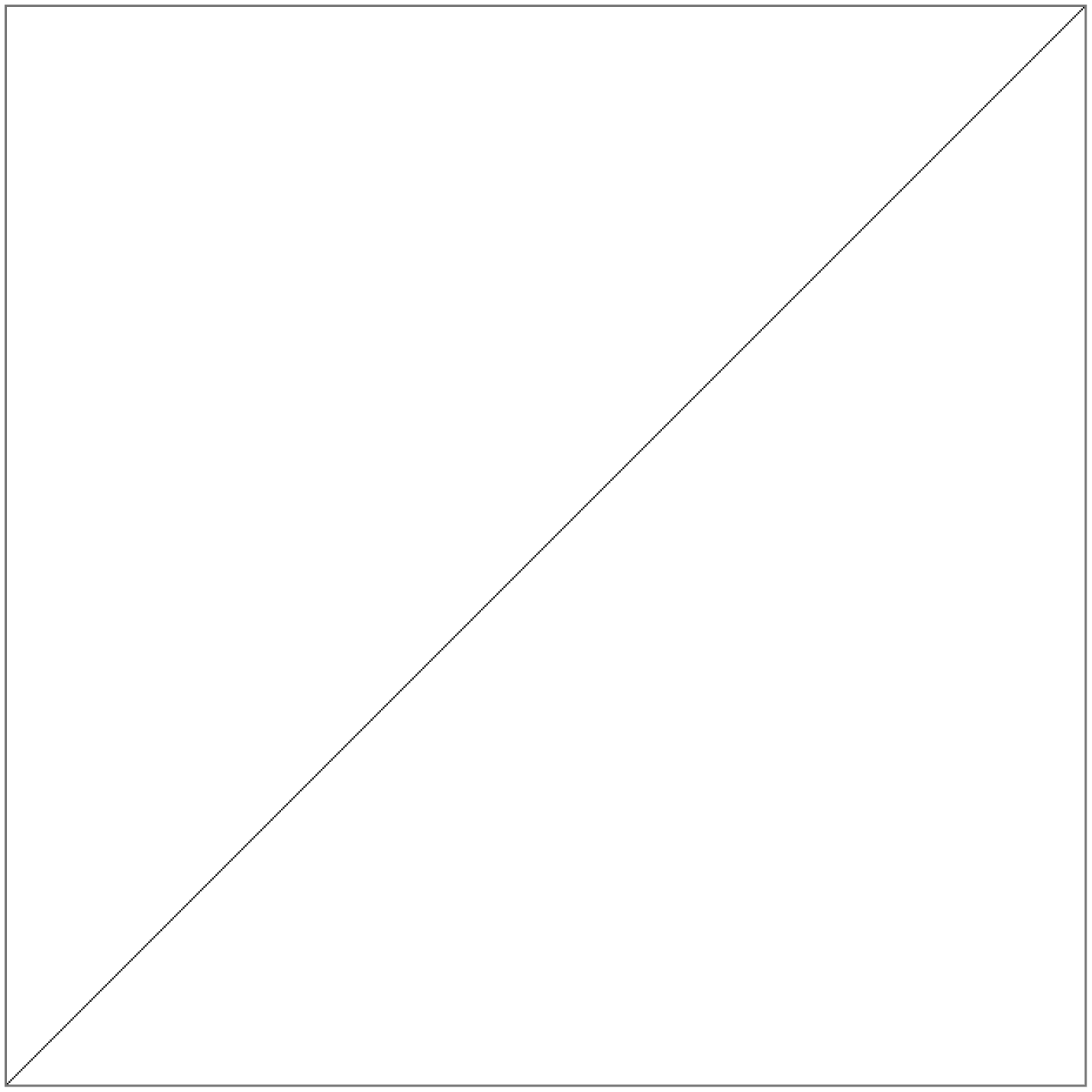}&
  \includegraphics[width=3.5cm]{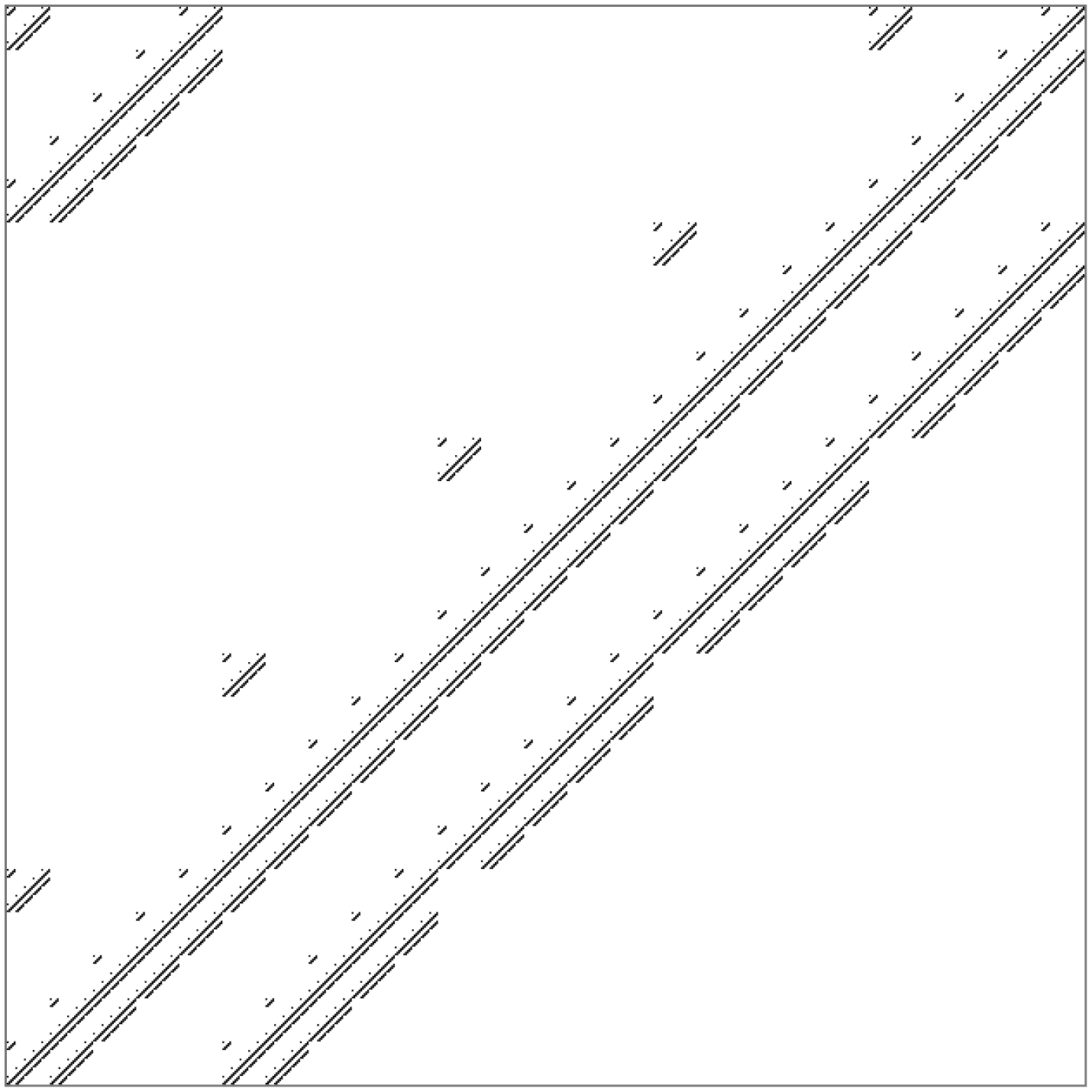}&
 \includegraphics[width=3.5cm]{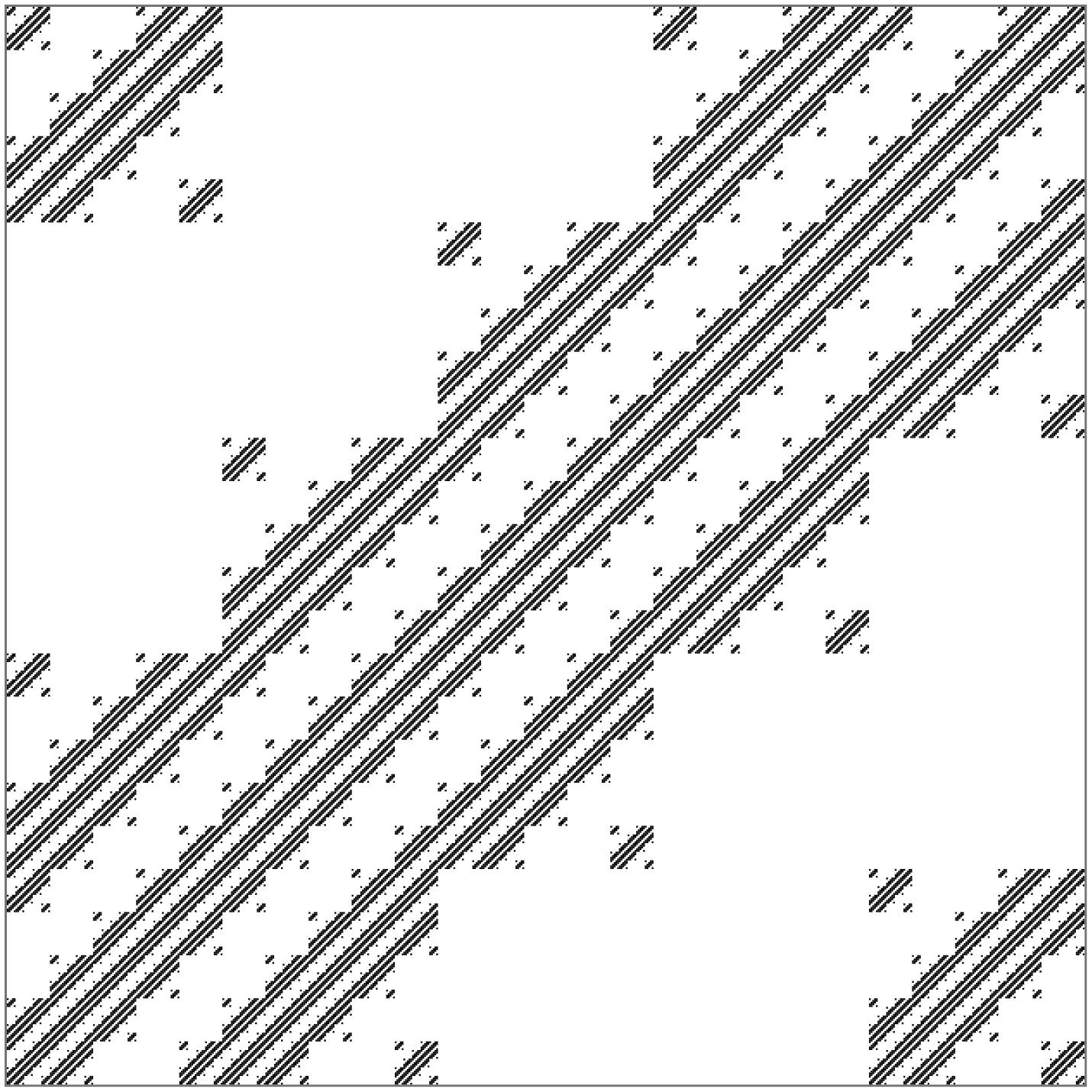} &
 \includegraphics[width=3.5cm]{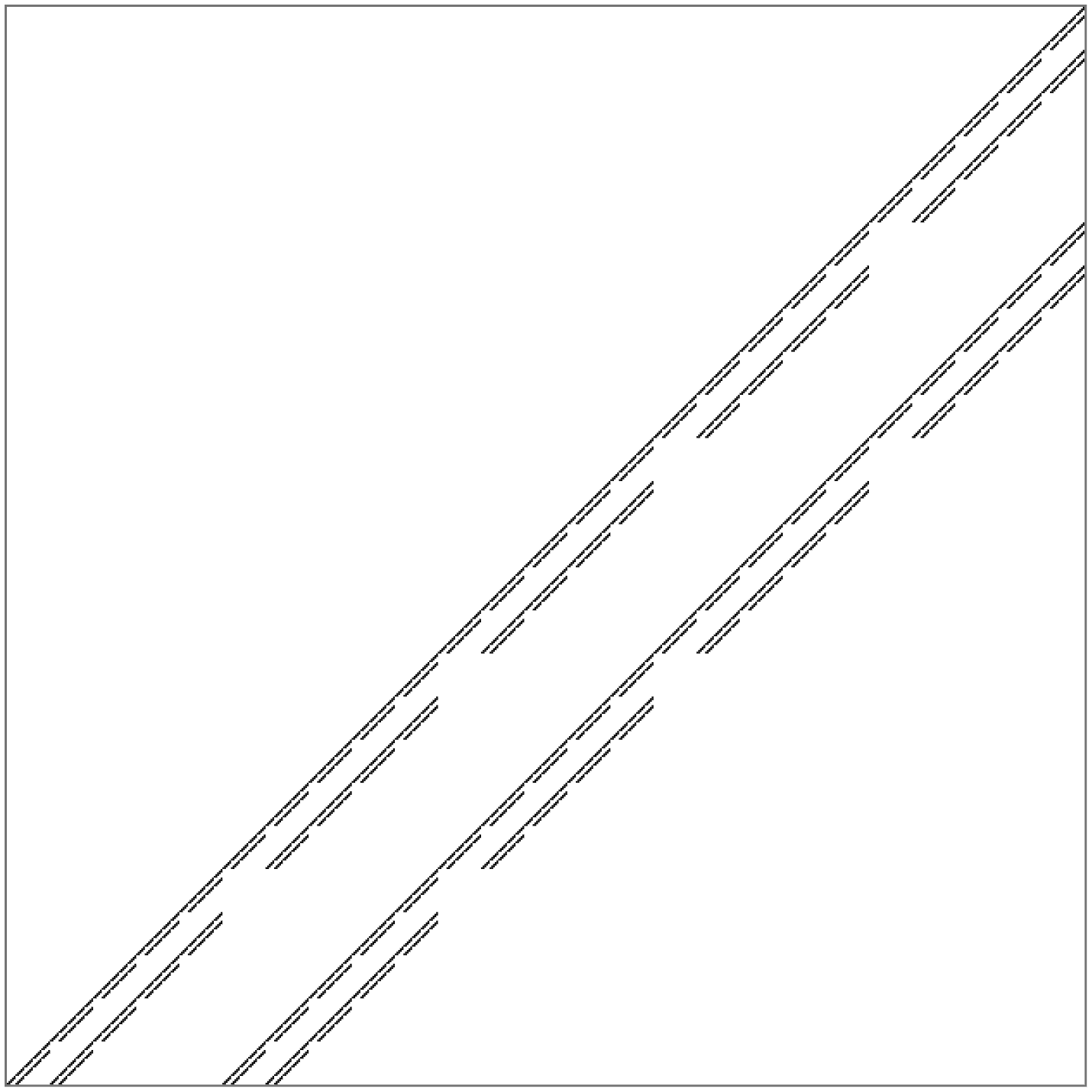} \\
$K^{(n)}(5,\Dc_1)(n=2,4)$&$K^{(n)}(5,\Dc_2)(n=2,4)$ & $K^{(n)}(5,\Dc_3)(n=2,4)$ & $K^{(n)}(5,\Dc_0)(n=2,4)$
 \end{tabular}
 \end{center} \vspace{-0.382cm} 
\caption{Approximations of Four Fractal Squares $K$.}\label{PC_or_NOT} \end{figure}
Then one can easily check the next observations.  First, $K(5,\Dc_1)$ is the diagonal of $[0,1]^2$. Second, $K(5,\Dc_2)$ has countably many line segments and infinitely many of them are of diameter $\ge\frac{4\sqrt{2}}{5}$. Third, $K(5,\Dc_3)$ has uncountably many line segments of diameter  
$\ge\frac{4\sqrt{2}}{5}$. Finally, $K(5,\Dc_0)$ has countably many line segments and they form a {\bf null sequence}, in the sense that for any $C>0$ at most finitely many of them are of diameter $\ge C$. 
Notice that $\frak{L}\Big(H(5,\Dc_2)\Big)$ contains countably many lines and a line is the limit of an infinite sequence of lines $l_j\in\frak{L}\Big(H(5,\Dc_2)\Big)$ if and only if it is contained in $L_1+\bbZ^2$. Moreover, the  part of 
$\frak{L}\Big(H(5,\Dc_3)\Big)$ contained in the strip bounded by $L_1\cup L_1+(0,1)$ consists of all the lines with slope one that crosses $(0,t)$ with $t$ lying in a Cantor set $\Cc$. This Cantor set $\Cc$ is the attractor of $\{g_1,g_2\}$, where  $g_1(t)=\frac{t}{5}$ and $g_2(t)=\frac{t+4}{5}$. It follows that $\dim_H\frak{L}\Big(H(5,\Dc_3)\Big)=1+\frac{\ln 2}{\ln 5}$.
\end{exam}

In the current paper, we  consider fractal squares $K$ and discuss  topological properties that are related to the connectedness and local connectedness.   Let us recall some notions.

A {\bf Peano compactum} means a compactum with locally connected components such that for any constant $C>0$ at most finitely many of them have a  diameter  $\ge C$. For instance, the fractal square $K(5,\Dc_i)(i=0,1)$ in Example \ref{exmp:slope_1} are Peano compacta while $K(5,\Dc_i)(i=2,3)$ are not. See \cite[Theorem 3]{LLY-2019} for  the motivation of Peano compactum from the study of polynomial Julia sets. See also \cite{LY-2022} concerning how Peano compactum plays a role  in extending Carath\'eodory's Continuity Theorem (\cite[Theorem 3]{Arsove68-a} or \cite[p.18]{Pom92}) to infinitely connected domains.

Due to \cite[Theorem 7]{LLY-2019}, we know that every planar  compactum $K$ allows a special upper semi-continuous (shortly, usc) decomposition into sub-continua, denoted by $\Dc_K^{PC}$, such that the following two requirements are both satisfied. First, the resulting quotient space is a Peano comapctum. Second, $\Dc_K^{PC}$ refines every other such decomposition satisfing the first requirement.  

\begin{deff}\label{def:CD}
We call $\Dc_K^{PC}$ the {\bf core decomposition} of $K$ with Peano quotient.
\end{deff}

Notice that $\Dc_K^{PC}$ is also an atomic decomposition of $K$, in the sense of  \cite[Definition 1.1]{FitzGerald-Swingle67}.
So, every element of $\Dc_K^{PC}$ is called an {\bf atom} (of order one) or an {\bf order one atom} of $K$.
Moreover, if $\delta$ is an order one atom of $K$, every atom of $\delta$ is called an {\bf order two atom} of $K$. Inductively we may define  {\bf atoms of order $m$} for $m\ge3$ and further define $\lambda_K: \bbR^2\rightarrow\bbN\cup\{\infty\}$ as follows.

Set $\lambda_K(x)=0$ for all $x\notin K$. Set $\lambda_K(x)=m-1$ for $x\in K$ such that there is a minimal $m\ge1$ with $\{x\}$  an order-$m$ atom of $K$ (thus $\{x\}$ is not an atom of order $m-1$, if $m>1$). Set $\lambda_K(x)=\infty$ for $x\in K$, if such an integer $m$ does not exist.

\begin{deff}\label{def:lambda}
We call $\lambda_K: \bbR^2\rightarrow\bbN\cup\{\infty\}$ the {\bf lambda function} of $K$.
\end{deff}

The lambda function $\lambda_K$ and its range $\lambda_K(K)=\{\lambda_K(x): x\in K\}$ quantify certain aspects of the topology of $K$. In particular, a continuum $K\subset\bbR^2$ is locally connected at $x\in K$ provided that $\lambda_K(x)=0$.
The following example demonstrates that  the local connectedness of $K$ at $x_0$ does not imply  $\lambda_K(x_0)=0$. 

\begin{exam}\label{exmp:LC_lambda=0}
Let $K$ be the union of $L_0=\{(t,0):0\le t\le1\}$ and the line segments $L_n(n\ge1)$ joining $(0,0)$ to $(1,\frac1n)$.  Then $K$ is the {\bf Closed Infinite Broom} \cite[p.139]{SS-1995} and $\lambda_K(x)=1$ if and only if $x\in L_0$. In particular, $K$ is locally connected at $x_0=(0,0)$ while $\lambda_K(x_0)=1$.\end{exam}

Given a continuum $K\subset\bbR^2$,   $\lambda_K(x)$ for specific $x$ measures the extent to which is $K$ not locally connected at $x_0$. Therefore, 
$\lambda_K(x)$ provides a numerical scale for non-local connectedness. A similar scale, based on fibers of planar continua, has been discussed in \cite{JLL16}.

With the aid of $\lambda_K$, one may ``quantify'' the classical Torhorst Theorem (\cite[p.106, (2.2)]{Whyburn42}), which states that  every complementary domain $U$ of a locally connected continuum $M\subset\bbR^2$ is bounded by a locally connected continuum.
We may strengthen this result as follows.
\begin{theorem*}[{\bf \cite[Theorem 2]{FLY-2022}}]
Given a compactum $K\subset\bbR^2$, a component $U$ of \ $\bbR^2\setminus\!K$ and  a nonempty compact set  $L\subset\partial U$, we have $\lambda_L(x)\le\lambda_{\partial U}(x)\le\lambda_K(x)$ for all $x\in\bbR^2$.
\end{theorem*}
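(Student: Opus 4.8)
The plan is to peel the two inequalities apart and prove each one by comparing core decompositions level by level. Since $L\subset\partial U\subset K$ and $\lambda_M(x)=0$ for every $x\notin M$, both inequalities hold automatically outside the smaller compactum, so all content sits at points of $\partial U$ (resp.\ of $L$). Recall that $\lambda_M(x)=m-1$ precisely when $m$ is least with $\{x\}$ an order-$m$ atom, i.e.\ when $\{x\}$ first appears after $m$ successive core decompositions. Hence it is enough to prove a one-step \emph{refinement}: every element of $\Dc_{\partial U}^{PC}$ lies in an element of $\Dc_K^{PC}$, and every element of $\Dc_L^{PC}$ lies in an element of $\Dc_{\partial U}^{PC}$ (in symbols $\Dc_L^{PC}\preceq\Dc_{\partial U}^{PC}\preceq\Dc_K^{PC}$), \emph{and} that this refinement persists when one passes to the atoms themselves. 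Granting persistence, induction on $m=\lambda_K(x)+1$ gives the result. If $m=1$ then $Q=\{x\}$, so the $\partial U$-atom $P\subset Q$ through $x$ equals $\{x\}$ and $\lambda_{\partial U}(x)=0$. If $m>1$ then $Q$ is nondegenerate and $\lambda_K(x)=1+\lambda_Q(x)$; writing $P\subset Q$ for the $\partial U$-atom through $x$, persistence gives $\Dc_P^{PC}\preceq\Dc_Q^{PC}$, so by induction $\lambda_P(x)\le\lambda_Q(x)$, whence $\lambda_{\partial U}(x)\le 1+\lambda_P(x)\le 1+\lambda_Q(x)=\lambda_K(x)$.

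For $\lambda_{\partial U}\le\lambda_K$ the one-step refinement is a localized, quantitative Torhorst Theorem. At a point $p$ where $K$ is Peano at scale one, meaning $\{p\}\in\Dc_K^{PC}$, I would show $\partial U$ is locally connected at $p$ by the usual boundary mechanism: a small connected neighborhood of $p$ in $K$, read off along the single domain $U$ sitting on one side, restricts to a connected neighborhood of $p$ in $\partial U$. Performing this uniformly shows that the subcontinuum one must collapse around $p$ to render $\partial U$ Peano is contained in the one collapsed to render $K$ Peano, which is exactly $\Dc_{\partial U}^{PC}\preceq\Dc_K^{PC}$. I would carry this out through the description of atoms by their complementary domains (the Sch\"onflies Condition), so that the comparison is intrinsic to $\partial U$ and $K$ rather than phrased via the ambient collapse.

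For $\lambda_L\le\lambda_{\partial U}$ one cannot invoke naive monotonicity under inclusion, since a compact subset of a Peano compactum need not be Peano---the closed topologist's sine curve embeds in a disk and raises $\lambda$ from $0$ to $1$. What must be used is that $\partial U$ is the frontier of the \emph{single} domain $U$: this same $U$ lies on one side of every compact $L\subset\partial U$, and the one-sided accessibility it supplies prevents $L$ from developing any non-Peano structure not already carried by $\partial U$. Concretely I would again use the complementary-domain description of atoms and show that any continuum forced to be collapsed inside $L$ is cut out by complementary domains of $L$ that are themselves inherited from complementary domains of $\partial U$, giving $\Dc_L^{PC}\preceq\Dc_{\partial U}^{PC}$.

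The hard part is the persistence of these refinements under iteration. The two one-step lemmas are proved for a frontier/subset pair sitting in its original planar position, but an order-one atom $Q\in\Dc_K^{PC}$ is only a subcontinuum of $K$ and is not handed to us as the frontier of a complementary domain; to keep the induction going one must compare $\Dc_P^{PC}$ with $\Dc_Q^{PC}$ for the nested atoms $P\subset Q$. I expect the way through to be that the Sch\"onflies description of atoms is scale-free---it speaks only of how complementary domains accumulate---so it applies verbatim inside any subcontinuum, and the one-step nesting $P\subset Q$ propagates to a nesting of their own core decompositions. Establishing that passing to a sub-atom creates no new obstruction on the smaller side, so that the refinement reproduces itself at every level, is the crux and the step I expect to demand the most care.
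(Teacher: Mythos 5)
First, a point of comparison: the paper does not prove this statement at all --- it is quoted as a black box from \cite[Theorem 2]{FLY-2022} --- so there is no internal proof to measure your attempt against, and the proposal must stand on its own. Judged that way, your skeleton is the right one: reduce both inequalities to a nesting of core decompositions $\Dc_L^{PC}\preceq\Dc_{\partial U}^{PC}\preceq\Dc_K^{PC}$, show that the nesting persists when one passes to atoms, and induct on the order $m$ of the atom $\{x\}$. You also correctly flag the two standard traps: $\lambda$ is not monotone under inclusion (the sine-curve-in-a-disk example is apt), and an order-one atom $Q\in\Dc_K^{PC}$ is not handed to you as the frontier of a complementary domain, so the one-step lemmas do not re-apply verbatim at the next level.

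The gap is that the two inputs carrying all the content are asserted rather than proved. For the one-step refinements you offer only ``the usual boundary mechanism'' and the claim that complementary domains of $L$ are ``inherited'' from those of $\partial U$; neither is an argument. In the Sch\"onflies formulation one must show that a strip $W$ witnessing failure of the Peano property for $L$ (infinitely many components of $\overline{W}\setminus L$ crossing $W$ and accumulating on a continuum through $x$) forces the same for $\partial U$ and then for $K$. Since $L\subset\partial U\subset K$ gives $\overline{W}\setminus K\subset\overline{W}\setminus\partial U\subset\overline{W}\setminus L$, a crossing component of $\overline{W}\setminus L$ need not contain any crossing component of $\overline{W}\setminus\partial U$: the inclusion of complements points the wrong way for a soft argument, and this is exactly where the hypothesis that $U$ is a \emph{single} complementary domain must be used quantitatively, which you do not do. More seriously, the induction as written does not close: at the step $m>1$ you apply the inductive hypothesis to the nested atoms $P\subset Q$, but that hypothesis was formulated (even in outline) only for pairs of the form (compact subset of $\partial U$, $\partial U$) and ($\partial U$, $K$); $P$ is not a compact subset of the boundary of a complementary domain of $Q$, so ``persistence'' is precisely the unproved statement the induction needs. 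You acknowledge this yourself by calling it the crux you expect to demand the most care --- which means the proposal is an accurate map of what must be proved, not a proof.
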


The lambda function $\lambda_K$ will be of particular interest, if  $K$ arises from  complex dynamics (like polynomial Julia sets) or fractal geometry (like self-similar sets). In this paper, we concentrate on fractal squares $K$, which are special self-similar sets. First, we obtain the following.

\begin{main-theorem}[{\bf Dichotomy Theorem}]\label{maintheo:LLR_improved}
Let $K$ be  a fractal square and $H=K+\bbZ^2$. Then  one of the next assertions holds: (1) the components of $\bbR^2\setminus H$ are unbounded and every component of $K$ is either a single point or a line segment; (2) the components of $\bbR^2\setminus H$ have a diameter $\le\sqrt{2}(N^2+1)^2/N$ and $K$ is a Peano compactum. Consequently, we have $\lambda_K(K)\subset\{0,1\}$.
\end{main-theorem}

By \cite[Theorem 2]{LRX-2022}, every component of a fractal square is locally connected. This, however, does not imply that $\lambda_K(K)\subset\{0,1\}$. Examples \ref{key_example_2} and \ref{key_example} give   compacta $L_n(n\ge2)$  having locally connected components and satisfying $\lambda_{L_n}(L_n)=\{0,1,\ldots,n\}$. By  Theorem \ref{maintheo:LLR_improved},  if $\lambda_K(x)>0$ for some $x\in K$ then  every component of $K$ is either a point or a line segment.  From this it follows that $\lambda_K(K)\subset\{0,1\}$. Therefore, Theorem \ref{maintheo:LLR_improved} improves the following.
\begin{theorem*}[{\bf \cite[Theorem 2.2]{Lau-Luo-Rao13}}]
Let $K$ be  a fractal square and $H=K+\bbZ^2$. Then  one of the next assertions holds: (1) the components of $\bbR^2\setminus H$ are unbounded and every component of $K$ is either a single point or a line segment; (2) the components of $\bbR^2\setminus H$ have a diameter $\le\sqrt{2}(N^2+1)^2/N$.\end{theorem*}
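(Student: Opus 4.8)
The plan is to organize the proof around the complementary domains of $H=K+\bbZ^2$ and to show that the genuine alternative is simply whether $\bbR^2\setminus H$ has an unbounded component or not; the two structural conclusions then fall out of this single alternative. I record first the features of $H$ that drive everything: $H$ is closed, $\bbZ^2$-periodic and $\mathcal{G}_N$-invariant, so that $\psi_N(H)\subseteq H$ (equivalently $\tfrac1N(\bbR^2\setminus H)\subseteq\bbR^2\setminus H$) and $\overline{{\rm Orb}(\ell)}\subseteq H$ for every line $\ell\subseteq H$; I may assume $\#\Dc<N^2$, for otherwise $K=[0,1]^2$ and $H=\bbR^2$ has empty complement. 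I would pass freely between $H$ and the decreasing approximations $H^{(n)}=K^{(n)}+\bbZ^2$, each a $1$-periodic union of closed $N^{-n}$-cells, using $\bbR^2\setminus H=\bigcup_n(\bbR^2\setminus H^{(n)})$.

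The heart of assertion (1) is the implication: \emph{if $\bbR^2\setminus H$ has an unbounded component $U$, then every component of $K$ is a point or a line segment.} I would first upgrade $U$ to a periodic channel. Since the complement is $\bbZ^2$-periodic and $U$ is connected and unbounded, a pigeonhole over the finitely many cell-types modulo $\bbZ^2$ together with connectivity yields a vector $v\in\bbZ^2\setminus\{0\}$ and an arc in $U$ from a point $p$ to $p+v$; by periodicity the translates of this arc concatenate into a bi-infinite channel contained in $U$, whose two walls lie in $H$. I would then straighten this channel by self-similarity: pulling it back under $\psi_N^{-k}$ and using $\overline{{\rm Orb}(\cdot)}\subseteq H$, its walls are forced to be straight, so $H$ contains a line $\ell$ in direction $v$ and, feeding $\ell$ through the digit operator, $\Dc(\ell)\subseteq\Dc$ records exactly the cells crossed. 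The self-similar identity $K=\bigcup_{d\in\Dc}f_d(K)$ with the open set condition then leaves no cell free to carry content transverse to $v$; iterating this cell-by-cell shows $K\cap[0,1]^2$ is a union of segments parallel to $v$, so no component of $K$ can contain a triple point or a simple closed curve, forcing each to be a point or a segment. Rigidity is then a corollary: a union of segments encloses no bounded region, so once $U$ exists there is no bounded complementary domain and \emph{every} component of $\bbR^2\setminus H$ is unbounded, which is exactly assertion (1).

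In the complementary branch, suppose $\bbR^2\setminus H$ has no unbounded component. For the diameter bound I would argue contrapositively at scale $N^2$: if some complementary domain met more than $(N^2+1)$ of the level-two cells across a fundamental domain, a pigeonhole on the $(N^2+1)\times(N^2+1)$ block of cells it traverses would produce an empty pattern repeating under a lattice vector, hence a periodic channel and an unbounded complementary domain, contrary to hypothesis; tracking the cell count across this block yields the stated bound $\diam\le\sqrt2(N^2+1)^2/N$. The new point is that $K$ is then a Peano compactum. Every component of $K$ is locally connected by \cite[Theorem 2]{LRX-2022}, so only the null property is at stake. Here I would use that boundedness of all holes prevents new separations from appearing at deep levels: the number of components of $H^{(n)}$ per fundamental domain stabilizes, so by the stabilization mechanism of Theorem \ref{theo:finite_tau_extended} the number $\beta_0(K)$ of components of $K$ is finite. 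A finite union of locally connected continua is a Peano compactum, giving assertion (2).

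Finally I would deduce $\lambda_K(K)\subseteq\{0,1\}$ from the two branches. If assertion (2) holds, $K$ is a Peano compactum, its core decomposition $\Dc_K^{PC}$ is trivial, and $\lambda_K\equiv0$. If assertion (1) holds, every component of $K$ is a point or a segment, hence itself Peano; since each order-one atom is a subcontinuum of a single component, it is a point or a subsegment and is therefore Peano, so its own core decomposition consists of singletons and every singleton in $K$ is an order-two atom, i.e.\ $\lambda_K(x)\le1$. Thus $\lambda_K(K)\subseteq\{0,1\}$ in all cases. I expect the main obstacle to be the straightening step in the second paragraph --- turning a merely unbounded complementary domain into an honest line in $H$ and then into a segment lamination of $K$ --- since this is precisely what rules out any intermediate or mixed configuration; the diameter bound and the finiteness-of-components input for the Peano upgrade are, by comparison, bookkeeping built on top of that rigidity.
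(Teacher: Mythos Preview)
The statement you were given is \emph{not} proved in this paper: it is quoted verbatim from \cite[Theorem 2.2]{Lau-Luo-Rao13} and used as a black box. The paper's own contribution is Theorem~\ref{maintheo:LLR_improved}, which strengthens case~(2) by adding that $K$ is a Peano compactum and then deduces $\lambda_K(K)\subset\{0,1\}$. Your proposal in fact attempts this stronger statement, so the only meaningful comparison is between your third and fourth paragraphs and the paper's proof of Theorem~\ref{maintheo:LLR_improved}.

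There the approaches diverge, and yours has a genuine gap. You argue that in case~(2) ``the number of components of $H^{(n)}$ per fundamental domain stabilizes'', invoke Theorem~\ref{theo:finite_tau_extended} to conclude $\beta_0(K)<\infty$, and finish with ``a finite union of locally connected continua is a Peano compactum''. But $\beta_0(K)$ need not be finite in case~(2). Take $N=5$ and $\Dc=\{(i,j):i\in\{0,4\}\ \text{or}\ j\in\{0,4\}\}\cup\{(2,2)\}$: since $\partial[0,1]^2\subset K$, the set $H$ contains the integer grid and every component of $\bbR^2\setminus H$ has diameter $\le\sqrt2$, so we are in case~(2); yet the central cell $f_{(2,2)}([0,1]^2)$ is isolated from the frame, and iterating gives infinitely many components $P,\,f_{(2,2)}(P),\,f_{(2,2)}^2(P),\ldots$ together with the fixed point $\{(1/2,1/2)\}$. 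Here $\beta_0(K^{(n)})$ strictly increases, so the stabilization hypothesis of Theorem~\ref{theo:finite_tau_extended} is never met and your conclusion $\beta_0(K)<\infty$ is simply false. (This also shows that your appeal to \cite[Theorem 2]{LRX-2022} plus finiteness cannot replace the Peano argument.)

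The paper avoids this entirely: it never claims finiteness of components. Instead it verifies the Sch\"onflies condition of Lemma~\ref{lem:LLY19-Thm3} directly, by a contradiction argument that manufactures, from a hypothetical bad strip, an arc in $K^{(n)}\setminus K$ of controlled size whose blow-up $N^n\gamma_n$ would give an arbitrarily large component of $\bbR^2\setminus H$. That argument uses the bounded-holes hypothesis exactly once, at the very end, and is explicitly independent of \cite{LRX-2022}. Your sketch of the Lau--Luo--Rao dichotomy itself (unbounded hole $\Rightarrow$ periodic channel $\Rightarrow$ line in $H$) is plausible but cannot be compared to anything here, since the paper does not reprove it.
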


Second, we characterize the fractal squares $K=K(N,\Dc)$ that satisfy $\lambda_K(K)=\{1\}$. Here it is necessary that $N\ge3$, since $K(2,\Dc)$ is connected for all $\Dc\subset\{0,1\}^2$.

\begin{main-theorem}\label{maintheo:product_form}
Given  $K=K(N,\Dc)$ with $N\ge3$, $\lambda_K(K)=\{1\}$ if and only if $K$ is the  product of  $[0,1]$ and a linear Cantor set that is the attractor of an IFS of the form $\left\{f_j(t)=\frac{s+t_j}{N}: 1\le j\le k\right\}$, where $2\le k\le N-2$ and $t_1,\ldots,t_k\in\left\{\frac{i}{N}: 0\le i\le N-1\right\}$ are distinct.
\end{main-theorem}

Finally, we characterize the fractal squares $K$ with $\lambda_K(K)=\{0,1\}$. Due to Theorem \ref{maintheo:LLR_improved}, we only need to consider the case that  $K$ has both points and line segments as components. All those line segments then have the same slope $\tau=\frac{r}{s}$, where $r$ and $s$ are co-prime integers. Let $(x_1,x_2)\xrightarrow{\Pi_\tau} x_2-\tau x_1$  be the projection of $\bbR^2$ onto $\bbR$. 
The components of $\prod_\tau\left(H_1\right)$ are open intervals, with diameters bounded away from zero and $\Omega_1=\left[0,\frac1s\right]\setminus\prod_\tau\left(\mathbb{R}^2\setminus H_1\right)$
 is a nonempty compact set  having finitely many components, which are isolated points or non-degenerate closed intervals. Assume with no loss of generality that $\Omega_1$ has $m=m_\Dc$ (or none) non-degenerate components and $q=q_\Dc$ (or none) isolated points. Then, we have the following.
\begin{main-theorem}\label{maintheo:criterion}
If $m\ge2$ then the Huasdorff dimension $\dim_H\lambda_K^{-1}(1)$ is strictly larger than one. If $m=1$ and $q\ge1$ then $\dim_H\lambda_K^{-1}(1)=1$. If $m\le1$ and $mq=0$ then $\lambda_K^{-1}(1)=\emptyset$.
\end{main-theorem}

\begin{exam}\label{exmp:lambda_criterion}	
Let the digit sets $\Dc_i(i=1,2,3)$ be given as in Example \ref{exmp:slope_1}. Then the pair $(m,q)$ respectively equals $(0,1)$, $(1,1)$, and $(2,0)$. By Theorem \ref{maintheo:criterion}, the fractal squares $K_i=K(5,\Dc_i)$ respectively satisfy $\lambda_{K_1}^{-1}(1)=\emptyset$, $\dim_H\lambda_{K_2}^{-1}(1)=1$ and $\dim_H\lambda_{K_3}^{-1}(1)>1$.
\end{exam}

\begin{rema}\label{rem:check_lambda_1}
The integers $q_\Dc$ and $m_\Dc$ may be determined for any $K=K(N,\Dc)$ for which $N$ and $\Dc\subset\{0,\ldots,N-1\}^2$ are given.
So Theorem  \ref{maintheo:criterion}  characterizes all fractal squares that have both points and line segments as components. See Theorems \ref{theo:Omega} and \ref{theo:intercept_set} for further details.
\end{rema}

The rest of this paper is organized as follows. In section \ref{K_3} we prepare necessary notions and basic result, trying to illustrate how certain topological properties of $K$ are connected to $K^{(n)}$. In section \ref{proof_1} we prove Theorems \ref{maintheo:LLR_improved} and \ref{maintheo:product_form}. In section \ref{proof_3} we deal with Theorem \ref{maintheo:criterion}. In section \ref{examples_and_questions} we  give  compacta $L_n(n\ge1)$, concretely constructed, such that $\lambda_{L_n}(L_n)=\{0,\ldots,n\}$.
\section{Investigating $K$ by Analyzing the Approximations $K^{(n)}$}\label{K_3}

A fractal square of order $N=2$ allows three possibilities: (1) it is a line segment, (2) it is the whole unit square, (3) it differs from the classical Sierpinski Triangle by an affine bijection. Since we are mostly interested in topological properties, we will focus on $K=K(N,\Dc)$, of any given order $N\ge3$. Obviously, if $\mathcal{D}\subset\{0,1,\ldots,N-2\}^2$ then $K$ is  a Cantor set. So, if the opposite is not explicitly stated, we usually assume that there is $d=(i,j)\in\mathcal{D}$ with $\max\{i,j\}=N-1$.   

In this section, we consider $\Dc$ as the ``data'' of $K$ and address how the topology of $K$ is  limited by $\Dc$.
Actually, we will illustrate how the number of components $\beta_0(K)$ is determined by  $K^{(3)}$ and under what conditions the fundamental group $\pi_1(K)$ (when $K$ is connected) is trivial. For general self-similar sets, one would not expect such connections.  

Let us start from some useful graphs. The first one is classical and has an origin in \cite{Hata85}.

\begin{deff}\label{def:Hata}
Given a fractal square $K=K(N,\Dc)$, let $G_1=G_1(\Dc)$ be the graph with vertex set $\left\{f_d(K): d\in\Dc\right\}$ such that two vertices $f_d(K),f_{d'}(K)$ are  incident if and only if $d\ne d'$ and  the corresponding sets  intersect. We call $G_1$  the {\bf first Hata graph} of $K(N,\Dc)$.
\end{deff}

In the same way $G_1$ is also defined for general self-similar sets. By \cite[Theorem 4.6]{Hata85},  $K$ is connected if and only if $G_1$ is. Further set  $f_d=f_{d_1}\circ f_{d_2}\circ\cdots\circ f_{d_n}$ for all $d=\sum_{i=0}^{n-1}N^{i-1}d_i$ with $d_i\in\Dc$. In a similar way, we can define the graphs $G_n=G_n(\Dc)$  with vertex set  $\left\{f_d(K): d\in\Dc_n\right\}$ such that  $f_d(K),f_{d'}(K)$  share an edge if and only if $d\ne d'$ and the corresponding sets intersect.
\begin{deff}\label{def:Hata_high}
Call $G_n=G_n(\Dc)$ the $n$-th Hata graph of $K=K(N,\Dc)$.\end{deff}

Note that  $K(N,\Dc)$ is also a fractal square of order $N^n(n\ge2)$, with digit set $\Dc_n$. Therefore, we have $K(N,\Dc)=K\left(N^n,\Dc_n\right)$. In the sequel, we   investigate the topology  of  $K$ and how $\beta_0(K)$ is related to $K^{(n)}$ with $n\le3$. Let 
$G_1^*\subset G_1$ be the subgraph,  with the same vertex set,  such that $G_1^*$ has an edge joining $f_d(K)$ to $f_{d'}(K)$   if and only if  $K^{(2)}+d$ intersects $K^{(2)}+d'$. Then we have the following.

\begin{theo}[{\cite[Theorem 2.2]{DaiLuoRuanWang_23}}]\label{theo:K2}
$G_1=G_1^*$. Consequently, if $\beta_0\left(K^{(3)}\right)=1$ then $\beta_0(K)=1$.
\end{theo}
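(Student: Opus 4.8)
The plan is to prove the graph identity $G_1=G_1^*$ first and then read off the connectivity consequence. One inclusion is immediate: since the approximations decrease to $K$ we have $K\subseteq K^{(2)}$, hence $f_d(K)\subseteq f_d(K^{(2)})$, so any two first-level cells that already meet inside $K$ also meet at the level-two approximation; thus every edge of $G_1$ is an edge of $G_1^*$. The content of the theorem is the reverse direction, namely that the coarser and computable adjacency recorded by $G_1^*$ does not overcount: if $f_d(K^{(2)})$ meets $f_{d'}(K^{(2)})$ then already $f_d(K)$ meets $f_{d'}(K)$. I would first reduce to grid-adjacent pairs. If the bounding squares $\frac{[0,1]^2+d}{N}$ and $\frac{[0,1]^2+d'}{N}$ are disjoint, neither graph has an edge, so only edge-adjacent or corner-adjacent $d,d'$ need attention, and for such pairs all contact is confined to the shared edge or the shared corner.

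Next I would treat the two geometric cases. The corner case is immediate: a corner of $[0,1]^2$ lies in $K$ exactly when the corresponding corner digit lies in $\Dc$, and the very same digit condition detects that corner inside $K^{(2)}$, so corner contact at level two already forces corner contact in the limit. For an edge-adjacent pair, say $d'=d+(1,0)$, I would pass to one dimension. The contact of the two cells along their common vertical edge is governed by the height sets $A=\{t:(1,t)\in K\}$ and $B=\{t:(0,t)\in K\}$, which are the attractors of the one-dimensional systems $\{t\mapsto\frac{t+c}{N}\}$ with $c$ running over the second coordinates of the digits in the right-most, respectively left-most, column of $\Dc$. Unwinding the definitions, the cells meet in the limit iff $A\cap B\neq\emptyset$, while their level-two approximations meet iff $A^{(2)}\cap B^{(2)}\neq\emptyset$; the vertical-neighbour case is identical after exchanging coordinates.

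The heart of the argument is the one-dimensional claim that $A^{(2)}\cap B^{(2)}\neq\emptyset$ forces $A\cap B\neq\emptyset$, and this is where I expect the real work to lie. Both $A^{(2)}$ and $B^{(2)}$ are unions of the same base-$N$ grid intervals of length $N^{-2}$, so their intersection is either a common grid interval or a pair of intervals abutting at a single point. In the first case the shared interval has both of its base-$N$ digits common to the two digit sets, which produces a common periodic point and hence a point of $A\cap B$. In the abutting case one checks that either a common digit again appears (when the shared point is an interior grid point, yielding an immediate common periodic point), or the shared point equals $\frac{a+1}{N}$ for some digit $a$, in which case its two base-$N$ expansions $0.a\,\overline{(N-1)}_N$ and $0.(a+1)\,\overline{0}_N$ use precisely the digits that the two level-two intervals guarantee are available, so the point lies in $A$ and in $B$ simultaneously. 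This finite case analysis, and in particular the bookkeeping of which digits the abutting level-two intervals make available, is the main obstacle; everything else is formal.

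Finally I would deduce the connectivity statement. By construction $K^{(3)}=\bigcup_{d\in\Dc}f_d(K^{(2)})$, and $G_1^*$ is exactly the intersection graph of this finite family of compact pieces. A finite union of compact sets is connected if and only if its intersection graph is connected: were the graph to split into two parts, the corresponding subunions would be disjoint nonempty closed sets separating $K^{(3)}$, so no connectedness of the individual pieces is needed. Hence $\beta_0(K^{(3)})=1$ forces $G_1^*$, and therefore $G_1$, to be connected, and by Hata's criterion (\cite[Theorem 4.6]{Hata85}) $K$ is connected, i.e. $\beta_0(K)=1$.
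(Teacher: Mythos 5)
Your proof is correct, and its overall strategy matches the paper's: the paper also dismisses corner-adjacent pairs by observing that a corner of $[0,1]^2$ lies in $K^{(2)}$ iff the corresponding corner digit lies in $\Dc$ iff the corner lies in $K$, and then settles edge-adjacent pairs by a finite case analysis exhibiting explicit fixed points of the IFS at the contact locus (this is Lemma \ref{lem:K2}, from which the theorem is deduced). The difference is in the bookkeeping. You reduce the edge-adjacent case to one dimension, comparing the traces $A,B$ of $K$ on the shared edge and classifying how a grid interval of $A^{(2)}$ can meet one of $B^{(2)}$; the paper instead classifies the relative positions of the two first-level cells containing the second-level cells that realize the contact (same coarse row, or one of the two diagonal configurations). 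The cases correspond exactly: your common-coarse-digit cases are the paper's same-row case, witnessed by the fixed point $\left(1,\frac{k}{N-1}\right)$, and your abutting-at-a-coarse-grid-point case is the paper's diagonal cases, witnessed by the corners $(0,0),(1,1)$ or $(1,0),(0,1)$ of $[0,1]^2$ forced into $K$ by the digits $(0,0),(N-1,N-1)$ or $(N-1,0),(0,N-1)$. So the witnessing points are literally the same, and your one-dimensional packaging is arguably cleaner. One remark: Lemma \ref{lem:K2} records something slightly stronger, namely that the contact of $K+d_1$ with $K+d_2$ is realized \emph{inside the specific second-level cells} $e_1,e_2$ that witness the level-two contact, and it is this localized form that is reused in the proof of Theorem \ref{theo:finite_tau_extended}; your argument produces the same points but does not state this refinement. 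Your derivation of the connectivity consequence (a finite closed cover with disconnected intersection graph yields a separation, then Hata's criterion) is the intended one.
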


Theorem \ref{theo:K2} is also implied by the following.
\begin{lemm}\label{lem:K2}
If  $\left(K^{(2)}+d_1\right)\cap \left(K^{(2)}+d_2\right)\ne\emptyset$ for $d_1\ne d_2$ belonging to $\Dc_1$, there exist $e_1,e_2\in\Dc_2$ such that $\left(\frac{K+e_1}{N^2}+d_1\right)\cap\left(\frac{K+e_2}{N^2}+d_2\right)\ne\emptyset$. In particular, we have $\left(K+d_1\right)\cap \left(K+d_2\right)\ne\emptyset$.
\end{lemm}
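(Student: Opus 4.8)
The plan is to reduce everything to a statement about integer translates of $K$ and then analyze how such intersections propagate through the approximations $K^{(n)}$ via self-similarity. Writing $v_0=d_2-d_1$, note that $v_0\in\{-1,0,1\}^2\setminus\{0\}$: it is nonzero since $d_1\ne d_2$, and it must lie in $\{-1,0,1\}^2$ because otherwise the bounding squares $[0,1]^2+d_1$ and $[0,1]^2+d_2$ are already disjoint, forcing the hypothesis to fail. With this notation the hypothesis reads $K^{(2)}\cap(K^{(2)}+v_0)\ne\emptyset$, while the auxiliary conclusion $(K+d_1)\cap(K+d_2)\ne\emptyset$ is exactly $K\cap(K+v_0)\ne\emptyset$. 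The refined statement about $e_1,e_2$ then drops out at the very end: once a common point $p\in(K+d_1)\cap(K+d_2)$ is found, the identity $K=K(N^2,\Dc_2)=\bigcup_{e\in\Dc_2}\frac{K+e}{N^2}$ places $p-d_1\in\frac{K+e_1}{N^2}$ and $p-d_2\in\frac{K+e_2}{N^2}$ for suitable $e_1,e_2\in\Dc_2$, which is precisely what is claimed.

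Write $P_n(v)$ for the assertion $K^{(n)}\cap(K^{(n)}+v)\ne\emptyset$. Since the $K^{(n)}$ decrease to $K$, the sets $K^{(n)}\cap(K^{(n)}+v)$ form a nested sequence of compacta with intersection $K\cap(K+v)$; hence $K\cap(K+v)\ne\emptyset$ if and only if $P_n(v)$ holds for every $n\ge 0$, and $P_{n+1}(v)$ implies $P_n(v)$. I would record two facts. First, $P_0(v)$ holds exactly when $v\in\{-1,0,1\}^2$. Second, unwinding $K^{(n+1)}=\bigcup_{d\in\Dc}\frac{K^{(n)}+d}{N}$ yields the self-similar recursion: $P_{n+1}(v)$ holds if and only if $P_n(Nv+d'-d)$ holds for some $d,d'\in\Dc$. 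These two facts encode the whole problem in a finite directed graph $\Gamma$ on the vertex set $\{-1,0,1\}^2$, with an edge $v\to w$ whenever $w=Nv+d'-d\in\{-1,0,1\}^2$ for some $d,d'\in\Dc$. By the recursion, $P_n(v)$ is equivalent to the existence of a directed path of length $n$ from $v$ in $\Gamma$, so $K\cap(K+v)\ne\emptyset$ is equivalent to the existence of an infinite path from $v$, i.e. to $v$ reaching a cycle. Thus $P_2(v_0)$ supplies a length-two path $v_0\to v_1\to v_2$, and the task is to upgrade it to an infinite one.

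The key structural observation is that $\pm1$ coordinates are frozen along edges: if $v\to w$ and $v_i=1$, then $w_i=N+(d'-d)_i\in\{-1,0,1\}$ together with $(d'-d)_i\in\{-(N-1),\dots,N-1\}$ forces $(d'-d)_i=-(N-1)$ and $w_i=1$, and symmetrically when $v_i=-1$. Consequently a vertex in $\{-1,1\}^2$ has only itself as a possible out-neighbour, so any outgoing edge from such a vertex is a self-loop. The case analysis is then short. If $v_0\in\{-1,1\}^2$ (corner adjacency), both coordinates are frozen, so $v_1=v_0$ and $v_0\to v_0$ is a self-loop, giving an infinite path at once. If $v_0$ has exactly one zero coordinate (edge adjacency), the nonzero coordinate is frozen and $v_1$ agrees with $v_0$ except possibly in the free coordinate: either $v_1=v_0$, a self-loop at $v_0$, or the free coordinate has become $\pm1$, so $v_1\in\{-1,1\}^2$ and the second edge $v_1\to v_2$ certifies that $v_1$ has an outgoing edge, hence a self-loop, after which prepending $v_0\to v_1$ produces an infinite path. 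In every case $v_0$ reaches a cycle, so $K\cap(K+v_0)\ne\emptyset$, and the refinement described above finishes the argument.

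I expect the main obstacle to be exactly the point that makes level $2$, rather than level $1$, the correct threshold: a single edge $v_0\to v_1$ may land on a corner vertex $v_1\in\{-1,1\}^2$ whose self-loop is not yet guaranteed, and it is precisely the second edge furnished by $P_2$ that certifies the needed outgoing edge at $v_1$. The remaining points—verifying the recursion, the nested-compacta characterization, and that the coordinate-freezing argument is valid for all $N\ge 2$—are routine once the graph $\Gamma$ has been set up.
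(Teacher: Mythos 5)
Your argument is correct, but it takes a genuinely different route from the paper's. The paper reduces to $d_2-d_1=(1,0)$ (the diagonal case is dispatched by noting a corner of $[0,1]^2$ lies in $K^{(2)}$ only if it lies in $K$), locates the two boundary-adjacent level-one cells through which the level-two cells touch, and in each of three configurations exhibits an \emph{explicit} common point of $K+d_1$ and $K+d_2$ --- a corner of $[0,1]^2$ or a fixed point of some $f_{e''}$ --- using the fact that fixed points of the IFS maps belong to the attractor. You instead encode the problem as reachability in a finite transition graph on displacement vectors $v\in\{-1,0,1\}^2$ with edges $v\to Nv+d'-d$, characterize $K\cap(K+v)\ne\emptyset$ by the existence of arbitrarily long paths (equivalently, reaching a cycle), justified by the nested nonempty compacta $K^{(n)}\cap(K^{(n)}+v)$, and finish with the coordinate-freezing observation. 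The combinatorial skeleton is the same --- your three values of the free coordinate of $v_1$ are exactly the paper's three cases, and your ``second edge certifies the self-loop at a corner vertex'' is precisely the paper's deduction that, e.g., $(0,0)$ and $(N-1,N-1)$ both lie in $\Dc$ --- but your packaging is more systematic and non-constructive: you never name a point of the intersection, relying on compactness instead. Your derivation of the ``$e_1,e_2$'' clause from $(K+d_1)\cap(K+d_2)\ne\emptyset$ via $K=\bigcup_{e\in\Dc_2}\frac{K+e}{N^2}$ is legitimate, since the lemma asserts only existence of some $e_1,e_2$ and the two clauses are in fact equivalent. A side benefit of your formulation is that it treats all displacement vectors uniformly and would extend with no extra work to the higher Hata graphs $G_n$.
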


\begin{proof} If a corner of $[0,1]^2$ does not lie in $K$, it does not lie in $K^{(1)}$ hence not in $K^{(2)}$. Thus the result is transparent whenever $|d_1-d_2|=\sqrt{2}$. So we only need to consider the case $|d_1-d_2|=1$. With no loss of generality, we may assume that $d_1=(0,0)$ and $d_2=(1,0)$.

The assumption  $\left(K^{(2)}+d_1\right)\cap \left(K^{(2)}+d_2\right)\ne\emptyset$ implies the existence of  $e_1,e_2\in\Dc_2$ such that $\frac{[0,1]^2+e_1}{N^2}+d_1$ and $\frac{[0,1]^2+e_2}{N^2}+d_2$ have at least one common point $x_0$. It will suffice to verify:
\begin{equation}\label{eq:K2}
\left(\frac{K+e_1}{N^2}+d_1\right)\cap\left(\frac{K+e_2}{N^2}+d_2\right)\ne\emptyset.	
\end{equation}
To do that, we pick  $e_i',e_i''\in\Dc_1$ for $i=1,2$ such that $e_i=e_i'+Ne_i''$ and set $Q_i=\frac{K^{(1)}+e_i}{N}$. Then there are three possibilities. First,   $e_1''=(N-1,k)$ and $e_2''=(0,k+1)$ for some $0\le k\le N-2$ and $x_0\in Q_1\cap Q_2$. See the left part of Figure  \ref{fig:H1*}.
\begin{figure}[ht]
\begin{center}
\includegraphics[width=13.5cm]{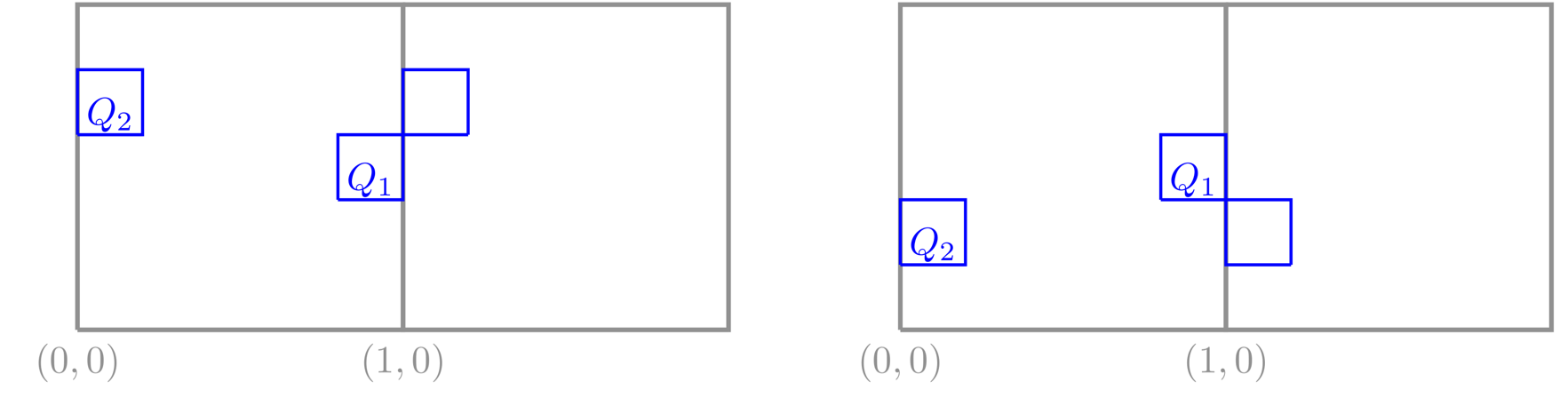}
\end{center}\vspace{-0.75cm}
\caption{Relative locations of $Q_1$ and $Q_2$ for the first and the second cases.}\label{fig:H1*}
\end{figure}
Second,    $e_1''=(N-1,k+1)$ and $e_2''=(0,k)\in\Dc$ for some $0\le k\le N-2$ and $x_0\in Q_1\cap Q_2$. See the right part of Figure \ref{fig:H1*}. Third,   $e_1''=(N-1,k)$ and $e_2''=(0,k)$ for some $0\le k\le N-1$ and $x_0\in Q_1\cap Q_2$.
In the first case, we have  $\{(0,0),(N-1,N-1)\}\subset\Dc$, which implies  $(0,0),(1,1)\in K$ and hence
\[
x_0\in\left(\frac{K+e_1}{N^2}+d_1\right)\bigcap\left(\frac{K+e_2}{N^2}+d_2\right).
\]
In the second, we have $\{(N-1,0),(0,N-1)\}\subset \Dc$, which implies $(1,0),(0,1)\in K$ and hence
\[
x_0\in\left(\frac{K+e_1}{N^2}+d_1\right)\bigcap\left(\frac{K+e_2}{N^2}+d_2\right).
\]
In the last, the point $u_1=\left(1,\frac{k}{N-1}\right)$ is fixed by $f_{e_1''}(x)=\frac{x+e_1''}{N}$ and coincides with $u_2+(1,0)$, where $u_2=\left(0,\frac{k}{N-1}\right)$ is fixed by $f_{e_2''}(x)=\frac{x+e_2''}{N}$. Thus \[
u_1\in\left(\frac{K+e_1}{N^2}+d_1\right)\bigcap\left(\frac{K+e_2}{N^2}+d_2\right).
\]
In all the cases, we have verified Equation \ref{eq:K2}. This completes our proof.
\end{proof}

To illustrate how the topology of $K^{(n)}$ may vary, when $n$ increases to $\infty$, we introduce the  following division rule. For any $i,j\ge1$,  let $K^{(i)}\bigotimes K^{(j)}$ be the set obtained by {\bf replacing every square $\frac{[0,1]^2+d}{N^i}$ with $d\in\Dc_i$ by $\frac{K^{(j)}+d}{N^i}$}. The resulting object is just $K^{(i+j)}$. 
\begin{deff}\label{def:K_operator}
Call  $K^{(i)}\bigotimes K^{(j)}$ the sub-division of $K^{(i)}$ by $K^{(j)}$.
\end{deff}
We consider $K^{(i)}\bigotimes K^{(j)}$ and $K^{(j)}\bigotimes K^{(i)}$ to be different operations, since $K^{(i)}\bigotimes K^{(j)}$ means we  start from $K^{(i)}$ and obtain $K^{(i+j)}$ by a sub-division rule based on $K^{(j)}$, while $K^{(j)}\bigotimes K^{(i)}$ means we  start from $K^{(j)}$ and obtain $K^{(i+j)}$ by a sub-division rule based on $K^{(i)}$.

\begin{theo}\label{theo:finite_tau_extended}
For any $n_0\ge2$, if $\beta_0\left(K^{(n_0+1)}\right)=\beta_0\left(K^{(n_0)}\right)$ then $\beta_0(K)=\beta_0\left(K^{(n_0)}\right)$.
\end{theo}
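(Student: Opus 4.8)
The plan is to show that the sequence $n\mapsto\beta_0\!\left(K^{(n)}\right)$ is non-decreasing, that the hypothesis forces it to become constant from $n_0$ onward, and that such a stabilized value must coincide with $\beta_0(K)$. First I would record the monotonicity $\beta_0\!\left(K^{(n)}\right)\le\beta_0\!\left(K^{(n+1)}\right)$. Since $K^{(n+1)}\subset K^{(n)}$, every component of $K^{(n+1)}$ lies in a unique component of $K^{(n)}$; conversely each component $C$ of $K^{(n)}$ is a union of level-$n$ squares and meets $K^{(n+1)}=K^{(n)}\bigotimes K^{(1)}$ in the nonempty set $C\bigotimes K^{(1)}$, so the map sending a component of $K^{(n+1)}$ to the component of $K^{(n)}$ containing it is surjective. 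As the finitely many components of $K^{(n)}$ are pairwise disjoint compacta, hence separated, this yields the inequality, and it shows that as $n$ grows components can only split, never merge.

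Next I would reduce the hypothesis to a statement about individual components. Writing $S\bigotimes K^{(1)}=\bigsqcup_C\big(C\bigotimes K^{(1)}\big)$ over the components $C$ of a square complex $S$ (squares in different components of $S$ do not touch, and subdivision keeps the pieces inside their squares, so they stay separated), one obtains $\beta_0\big(S\bigotimes K^{(1)}\big)=\sum_C\beta_0\big(C\bigotimes K^{(1)}\big)$. Applied to $S=K^{(n_0)}$, the equality $\beta_0\!\left(K^{(n_0+1)}\right)=\beta_0\!\left(K^{(n_0)}\right)$ becomes equivalent to the assertion that every component $C$ of $K^{(n_0)}$ has $C\bigotimes K^{(1)}$ connected, i.e.\ that no component splits under a single subdivision.

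The crux is then to propagate this no-splitting condition one level further: if no component of $K^{(n_0)}$ splits, then no component of $K^{(n_0+1)}$ splits either, after which induction gives $\beta_0\!\left(K^{(n)}\right)=\beta_0\!\left(K^{(n_0)}\right)$ for all $n\ge n_0$. Here I would exploit self-similarity: the rule carrying a square complex to its subdivision is level-independent, because whether the subdivisions of two touching squares touch again depends only on the adjacency type (horizontal, vertical, or diagonal) and on which boundary digits lie in $\Dc$, not on the level. The difficulty is that mere contact of two level-$(n_0+1)$ squares need not persist after a further subdivision, so connecting paths cannot be lifted naively. To overcome this I would invoke the gluing principle of Lemma \ref{lem:K2}: for adjacent cells, contact of the depth-two subdivisions already forces contact of the limiting copies, and this statement is scale-invariant. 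Regrouping the level-$(n_0+1)$ squares into the coarser level-$(n_0-1)$ cells containing them---this is where the assumption $n_0\ge2$ is used---lets me recognise the contacts responsible for connectivity as contacts of $K^{(2)}$-subdivisions of adjacent coarse cells, hence as permanent contacts of the limiting copies. Thus the connections holding $C\bigotimes K^{(1)}$ together survive every further subdivision, so that $C\bigotimes K^{(n)}$ remains connected for all $n$ (a relative, many-component analogue of Theorem \ref{theo:K2}). I expect this propagation step, and in particular the bookkeeping of adjacency types under the coarse regrouping, to be the main obstacle.

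Finally I would identify the stabilized value with $\beta_0(K)$. Once $\beta_0\!\left(K^{(n)}\right)$ is constant for $n\ge n_0$, the component-to-component maps above are bijections, so the components organise into finitely many nested sequences of continua $C_{n_0}\supset C_{n_0+1}\supset\cdots$. Each intersection $\bigcap_n C_n$ is a continuum (a nested intersection of continua), these intersections are pairwise disjoint and compact, and together they cover $K=\bigcap_n K^{(n)}$. A finite cover of the compactum $K$ by disjoint nonempty closed connected sets is precisely the family of connected components of $K$, whence $\beta_0(K)=\beta_0\!\left(K^{(n_0)}\right)$.
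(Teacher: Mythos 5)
Your outer scaffolding is sound: the monotonicity of $n\mapsto\beta_0\!\left(K^{(n)}\right)$, the reformulation of the hypothesis as ``no component of $K^{(n_0)}$ splits under one subdivision'', and the identification of the stabilized value with $\beta_0(K)$ via nested intersections of continua all match what is needed (the paper leaves that last step implicit). The gap is exactly where you predicted it, in the propagation step, and the mechanism you propose for it does not work. Regrouping the level-$(n_0+1)$ cells into level-$(n_0-1)$ cells only lets Lemma \ref{lem:K2} speak about contacts between two cells lying in \emph{different} coarse cells; a contact between two level-$(n_0+1)$ cells inside the \emph{same} level-$(n_0-1)$ cell is not ``a contact of $K^{(2)}$-subdivisions of adjacent coarse cells'', and such contacts can carry all of a component's connectivity (a component of $K^{(n_0)}$ may sit entirely inside one level-$(n_0-1)$ cell, in which case your argument yields nothing about it). Moreover, even if every pairwise contact you record were permanent, that would not make $C\bigotimes K^{(m)}$ connected: each piece $\sigma\bigotimes K^{(m)}$ is a scaled copy of $K^{(m)}$ and may itself be disconnected, and a union of disconnected compacta whose contact graph is connected need not be connected. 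So permanence of cell-to-cell contacts is the wrong invariant to propagate.

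The paper's proof avoids both problems by choosing different building blocks. At the inductive step it writes $K^{(n)}=\bigcup_{j,d}\frac{\Pc_j+d}{N}$ and $K^{(n+1)}=\bigcup_{j,d}\frac{\Pc^{\#}_j+d}{N}$, where $\Pc_1,\dots,\Pc_q$ and $\Pc^{\#}_1,\dots,\Pc^{\#}_q$ are the components of $K^{(n-1)}$ and of $K^{(n)}$, equinumerous and paired by inclusion thanks to the inductive hypothesis; the condition $n_0\ge2$ guarantees these pieces sit at depth at least two inside their level-$1$ cell, which is what Lemma \ref{lem:K2} needs. These pieces are connected by construction, two pieces in the same level-$1$ cell are automatically disjoint, and for $d\ne d'$ Lemma \ref{lem:K2} shows that $\frac{\Pc_j+d}{N}$ meets $\frac{\Pc_{j'}+d'}{N}$ if and only if $\frac{(\Pc_j\cap K)+d}{N}$ meets $\frac{(\Pc_{j'}\cap K)+d'}{N}$; since $\Pc_j\cap K=\Pc^{\#}_j\cap K$, the two contact graphs are isomorphic and the component count is preserved. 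If you want to salvage your outline, replace ``individual cells grouped into level-$(n_0-1)$ cells'' by ``scaled components of the previous approximation grouped into level-$1$ cells'': in that decomposition the intra-cell contacts disappear and Lemma \ref{lem:K2} controls everything that remains.
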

\begin{proof} Let $q=\beta_0\left(K^{(n_0)}\right)$. We will  inductively show that $\beta_0\left(K^{(n)}\right)=q$ for all $n\ge n_0$.
When $n\le n_0+1$, this is already given in the assumption. In the sequel we suppose that  $\beta_0\left(K^{(j)}\right)=q$ for  $n_0\le j\le n$ and infer that $\beta_0\left(K^{(n+1)}\right)=q$ still holds. Let $\Pc_1,\ldots, \Pc_q$ be the components of $K^{(n-1)}$ and $\Pc^\#_1,\ldots, \Pc^\#_q$ the components of $K^{(n)}$ with $\Pc^\#_j\subset\Pc_j$. Recall that $K^{(n)}=K^{(1)}\bigotimes K^{(n-1)}$ and $K^{(n+1)}=K^{(1)}\bigotimes K^{(n)}$.
For any $1\le j\le q$ and $d\in\Dc_1$, let $\Delta_{j,d}=\frac{\Pc_j+d}{N}$ and $\Delta^{\#}_{j,d}=\frac{\Pc^\#_j+d}{N}$. There are three observations. First,   all those sets $\Delta_{j,d}$ and $\Delta^{\#}_{j,d}$ are connected. Second, 
 the equations below are both valid:
\begin{align}\label{eq:rewrite}
K^{(n)}&= 
\bigcup_{j=1}^q\ \bigcup_{d\in\Dc_1}\ \Delta_{j,d} 
\\
K^{(n+1)}&=
\bigcup_{j=1}^q\ \bigcup_{d\in\Dc_1}\ \Delta^{\#}_{j,d} 
\end{align}
Finally, each $\Pc^\#_i$  consists of all $\Delta_{j,d}\in\Uc^{(i)}$.
For $1\le i\le q$, let $\Uc^{(i)}$ be the family of those $\Delta_{j,d}$ with $1\le j\le q$ and $d\in\Dc_1$ that are contained in $\Pc_i$. Similarly, let $\Uc^{\#(i)}$ be the family of those  $\Delta^\#_{j,d}$ with $1\le j\le q$ and $d\in\Dc_1$ that are contained in $\Pc^\#_i$. Then we define  two graphs  $\Gc^{(i)}$ and $\Gc^{\#(i)}$, respectively  with vertex sets $\Uc^{(i)}$ and $\Uc^{\#(i)}$, such that there is an edge between  two vertices if and only if the corresponding sets intersect. For any $(j,d)\ne(j',d')$ with $1\le j,j'\le q$ and $d\ne d'$ belonging to $\Dc_1$, by Lemma \ref{lem:K2} we can infer that the following equations both hold:
\begin{align}
\Delta_{j,d}\cap \Delta_{j',d'}=\frac{\Pc_j+d}{N}\cap\frac{\Pc_{j'}+d'}{N}\ne\emptyset
\hspace{0.618cm}& \Leftrightarrow & \frac{\Pc_j\cap K+d}{N}\cap\frac{\Pc_{j'}\cap K+d'}{N}\ne\emptyset
\\
\Delta^\#_{j,d}\cap \Delta^\#_{j',d'}=\frac{\Pc^\#_j+d}{N}\cap\frac{\Pc^\#_{j'}+d'}{N}\ne\emptyset
\hspace{0.618cm}& \Leftrightarrow & \frac{\Pc^\#_j\cap K+d}{N}\cap\frac{\Pc^\#_{j'}\cap K+d'}{N}\ne\emptyset
\end{align}
It then follows that  $\Gc^{(i)}$ and $\Gc^{\#(i)}$ are isomorphic graphs, since $\Pc_k\cap K=\Pc_k^\#\cap K$ for $1\le k\le q$ and hence we have:
\begin{equation}\label{eq:inductive}
\Delta_{j,d}\cap \Delta_{j',d'}\ne\emptyset \quad \Leftrightarrow\quad \Delta^{\#}_{j,d}\cap \Delta^{\#}_{j,d}\ne\emptyset\end{equation}
Since $\Pc^\#_i$ is a component of $K^{(n)}$ and  consists of all $\Delta_{j,d}\in\Uc^{(i)}$, we see that $\Gc^{(i)}$ hence  $\Gc^{\#(i)}$ is a connected graph. This forces the union $\Qc_i$ of all $\Delta^\#_{j,d}\in\Uc^{\#(i)}$ to be a connected set. Since $K^{(n+1)}=\bigcup_i\Qc_i$, it is immediate that $\beta_0\left(K^{(n+1)}\right)=q$. We are done.
\end{proof}

If in Theorem \ref{theo:finite_tau_extended} we have $n_0=2$, then it is easy to check  whether $\beta_0\left(K^{(2)}\right)=\beta_0\left(K^{(3)}\right)$. Notice that the condition  $n_0\ge2$ is sharp. For instance, if $\Dc_1$ is given as in Example \ref{K3_disconnected-1}, then $\beta_0\left(K^{(1)}(3,\Dc_1)\right)=\beta_0\left(K^{(2)}(3,\Dc_1)\right)=1$, while  $\beta_0\left(K^{(3)}(3,\Dc_1)\right)=3$. However, no fractal square $K$ is known such that  $1<\beta_0\left(K^{(1)}\right)=\beta_0\left(K^{(2)}\right)<\beta_0\left(K^{(3)}\right)$.

\begin{exam}\label{K3_disconnected-1}
Let  $\Dc_1=\{(1,0),(0,1),(1,1),(2,1),(2,2)\}$. The left part of Figure \ref{fig:K2K3} illustrates $K^{(n)}(3,\Dc_1)$ for $1\le n\le3$. We have  $\beta_0\left(K^{(1)}(3,\Dc_1)\right)=\beta_0\left(K^{(2)}(3,\Dc_1)\right)=1$ and $\beta_0\left(K^{(3)}(3,\Dc_1)\right)=3$.
\begin{figure}[ht]
\begin{center}
\begin{tabular}{cc}
\includegraphics[width=4.0cm]{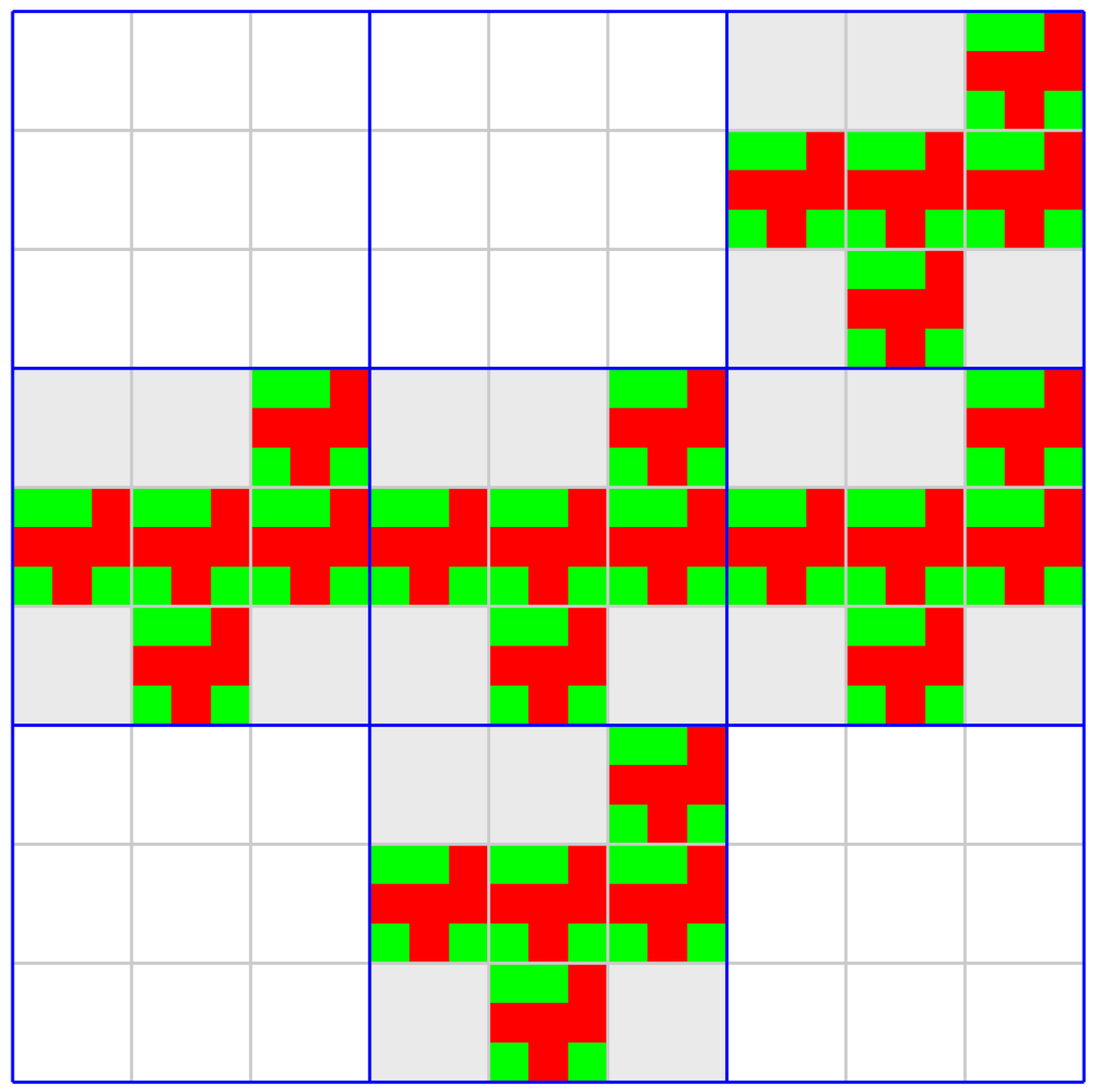} & \hspace{0.618cm}
\includegraphics[width=4.0cm]{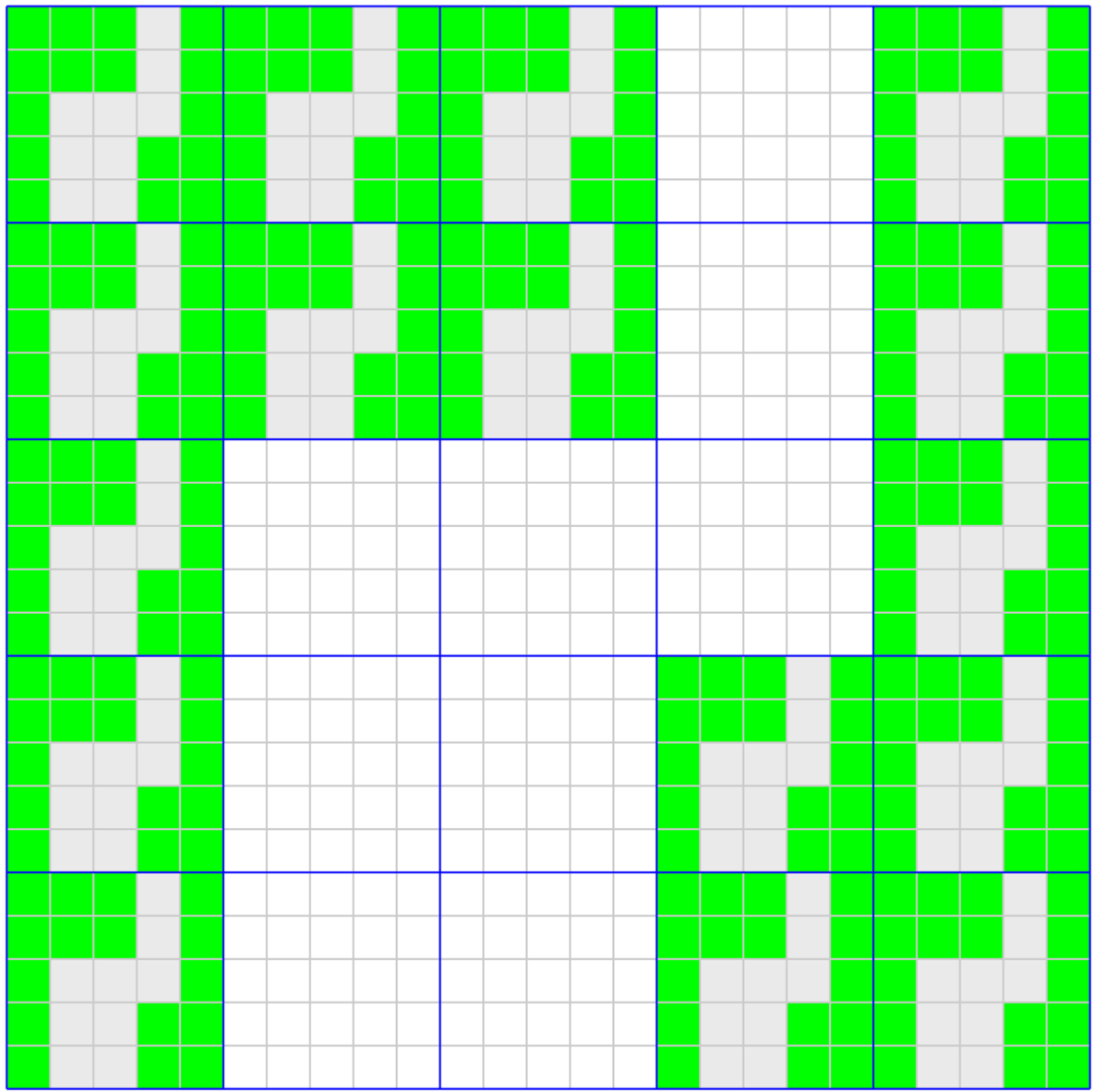}
\end{tabular}
\end{center}
\vspace{-0.618cm}
\caption{The beginning approximations of two fractal squares.}\label{fig:K2K3}
\end{figure}
The right part of Figure \ref{fig:K2K3} illustrates  $K^{(n)}(5,\Dc_2)(n=1,2)$ of $K(5,\Dc_2)$, where $\Dc_2$ is a modification of the one given in \cite[Figure 1]{Xiao21}, with less digits.  Notice that $\beta_0\Big(K^{(1)}(5,\Dc_2)\Big)=\beta_0\Big(K^{(2)}(5,\Dc_2)\Big)=2$. It follows that $\beta_0\Big(K^{(3)}(5,\Dc_2)\Big)=2$ and hence $\beta_0\Big(K(5,\Dc_2)\Big)=2$.  
\end{exam}

The next theorem gives a sufficient condition for the fundamental group $\pi_1(K)$ to be trivial.
\begin{theo}\label{theo:K3_Pi1}
Given a connected fractal square $K$, $\pi_1(K)$ is trivial whenever  $\pi_1\left(K^{(3)}\right)$ is.
\end{theo}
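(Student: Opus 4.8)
The plan is to translate the statement about $\pi_1$ into a statement about separation of the plane, and then to propagate the level-$3$ information first to all higher approximations and finally to $K$ itself. Since $K$ is connected, \cite[Theorem 2]{LRX-2022} makes $K$ a planar Peano continuum, and for such a continuum $\pi_1(K)$ is trivial if and only if $K$ does not separate $\mathbb{R}^2$ (a non-separating planar Peano continuum is simply connected, while a hole forces $H_1(K)\neq 0$ by Alexander duality). Each approximation $K^{(n)}$ is a finite union of closed squares, hence a connected (because $K$ is connected and every square of $K^{(n)}$ meets $K$) planar set whose $\pi_1$ is free of rank equal to the number of bounded complementary components; so $\pi_1\!\left(K^{(n)}\right)$ is trivial exactly when $\mathbb{R}^2\setminus K^{(n)}$ is connected. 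Thus it suffices to show: if $\mathbb{R}^2\setminus K^{(3)}$ is connected, then $\mathbb{R}^2\setminus K$ is connected.

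First I would treat the passage to the limit. Suppose, toward a contradiction, that $\mathbb{R}^2\setminus K$ has a bounded component $U$, and pick $p\in U$. Because $K=\bigcap_nK^{(n)}$ we have $p\notin K^{(n)}$ for all large $n$, and since $K^{(n)}\supset K$ the component of $p$ in the smaller open set $\mathbb{R}^2\setminus K^{(n)}$ is contained in the component of $p$ in $\mathbb{R}^2\setminus K$, namely $U$, hence bounded. Therefore $\mathbb{R}^2\setminus K^{(n)}$ is disconnected for all large $n$. Consequently the whole theorem reduces to the purely finite-level claim that $\mathbb{R}^2\setminus K^{(n)}$ is connected for every $n\ge 3$, which I would prove by induction on $n$, the base case being the hypothesis at $n=3$.

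For the inductive step I would use the subdivision calculus of Definition \ref{def:K_operator}, writing $K^{(n+1)}=K^{(n-2)}\bigotimes K^{(3)}$, so that $K^{(n+1)}$ is a union of scaled copies of the simply connected block $K^{(3)}$, one per square of $K^{(n-2)}$, glued exactly as the corresponding squares of $K^{(n-2)}$ meet. Since passing from $K^{(n)}$ to $K^{(n+1)}$ only removes material, no existing hole is destroyed and it suffices to rule out the birth of a new bounded complementary component; such a component can only appear at a junction between two adjacent blocks, and such a junction involves at most a $2\times 2$ array of squares, a configuration governed by $K^{(2)}$ after one subdivision and certified by the hypothesis on $K^{(3)}$ after the next. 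The mechanism is identical in spirit to Lemma \ref{lem:K2}: the way two adjacent copies touch along a shared edge or corner is already determined by which corner and edge points of $[0,1]^2$ lie in $K$, i.e.\ by $\Dc$, and this contact pattern does not change under further refinement. Feeding this stabilization into a van Kampen / Mayer--Vietoris argument then shows that refining a complement-connected $K^{(n)}$ to $K^{(n+1)}$ creates no new bounded complementary component, so that $\mathbb{R}^2\setminus K^{(n+1)}$ is again connected.

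The hard part will be this junction lemma, namely proving rigorously that no new bounded complementary component is born when adjacent blocks are refined, given only that $K^{(3)}$ is hole-free. This is where the special value $n=3$ enters, just as $n_0\ge 2$ is sharp in Theorem \ref{theo:finite_tau_extended}: one subdivision exposes every local contact pattern at the scale of $K^{(2)}$, and a second subdivision certifies that these patterns have stabilized, so that the level-$3$ hypothesis already rules out holes at all higher levels. I would organize the bookkeeping by the three relative positions of adjacent squares used in Lemma \ref{lem:K2} (two types of edge contact and corner contact), checking in each case that the shared boundary sub-squares present at level $3$ persist verbatim at every later level, which is what prevents an enclosed gap from ever forming.
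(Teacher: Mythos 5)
Your reduction to the approximations and the overall induction starting at level $3$ match the paper's strategy, and your passage-to-the-limit step is sound. But the inductive step --- what you yourself call the ``junction lemma'' --- is the entire content of the theorem, and your proposal does not prove it; it only asserts that ``the contact pattern does not change under further refinement'' and that this is ``certified by the hypothesis on $K^{(3)}$.'' That assertion is exactly what needs an argument. A priori, two adjacent blocks $\frac{K^{(n)}+d}{N}$ and $\frac{K^{(n)}+d'}{N}$ can meet along a \emph{disconnected} subset of their common edge (two separated segments, say), and then refinement inside the blocks can enclose a gap between the two contact loci, creating a new bounded complementary component even though each block is hole-free. Nothing in your sketch rules this out, and Lemma \ref{lem:K2} does not either: it only guarantees that adjacency persists under refinement, not that the overlap is connected or stable.

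The paper closes precisely this gap with three claims. Claim 1: $K\not\supset\partial[0,1]^2$. Claim 2: for $d\ne d'$ in $\Dc$, the intersection $\left(K^{(2)}+d\right)\cap\left(K^{(2)}+d'\right)$ is connected whenever nonempty --- proved by showing that a disconnected overlap would force a bounded complementary component of $K^{(3)}$; this is where the hypothesis on $K^{(3)}$, rather than on $K^{(2)}$, is actually consumed. Claim 3: the digit patterns $\{0,N-1\}\times\{u,v\}$ (resp.\ $\{u,v\}\times\{0,N-1\}$) are excluded when horizontally (resp.\ vertically) adjacent digits exist, since together with Claim 2 they would force $\partial[0,1]^2\subset K$, contradicting Claim 1. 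The upshot is that for every $n\ge2$ and $d\ne d'$ in $\Dc$ the sets $\frac{K^{(n)}+d}{N}$ and $\frac{K^{(n)}+d'}{N}$ meet in at most a single point, after which the van Kampen theorem gives $\pi_1\left(K^{(n+1)}\right)=\{0\}$. If you want to keep your complement-connectivity phrasing you may, but you still need the analogue of Claims 2 and 3; without them the induction does not go through.
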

\begin{proof} Suppose that $K=K(N,\Dc)$, where $\Dc\ne\{0,1,\ldots,N-1\}^2$ is the digit set. If on the contrary $\pi_1(K)$ were not trivial there would be a non-trivial loop $\gamma:[0,1]\rightarrow K$, such that the complement $[0,1]^2\setminus \gamma([0,1])$ has at least one component $U$ that is separated from $\infty$ by $\gamma([0,1])$. Then, for large enough $n$,  at least one square of size $\frac{1}{N^n}$  is entirely contained in  $U\setminus K^{(n)}$. This implies that  $\pi_1\left(K^{(n)}\right)\ne\{0\}$. Therefore, it suffices to show  that $\pi_1\left(K^{(n)}\right)=\{0\}$ for all $n\ge1$. In the sequel, we will do that by induction.

Since $\pi_1\left(K^{(n)}\right)=\{0\}$ with $n\le3$ is ensured by the assumption $\pi_1\left(K^{(3)}\right)=\{0\}$, we suppose that $\pi_1\left(K^{(j)}\right)=\{0\}$ for all $j\le n$ (with $n\ge3$) and use this to infer that  $\pi_1\left(K^{(n+1)}\right)=\{0\}$.

First,  the assumptions $\Dc\ne\{0,1,\ldots,N-1\}^2$ and $\pi_1\Big(K^{(1)}\Big)=\{0\}$  imply the following.

{\bf Claim 1}. $K$ does not contain $\partial[0,1]^2$

Second, we obtain the following.

{\bf Claim 2}.  Given $d\ne d'$ in $\Dc$,  if $\left(K^{(2)}+d\right)\cap\left(K^{(2)}+d'\right)$ is nonempty then it is connected.

Otherwise, there would be a separation, say $\left(K^{(2)}+d\right)\cap\left(K^{(2)}+d'\right)=A\cup B$. Pick $a\in A$ and $b\in B$.  By Lemma \ref{lem:K2}, we have $d-d'\in\{\pm(1,0),\pm(0,1)\}$. We may assume with no loss of generality  that $d-d'=\pm(1,0)$. Pick two arcs $\gamma\subset K^{(2)}+d$ and $\gamma'\subset K^{(2)}+d'$, each of which connects $a$ to $b$. Then $J=\gamma\cup\gamma'$  has a bounded complementary component, say $U$, which is not contained in the union $\left(K^{(2)}+d\right)\cup\left(K^{(2)}+d'\right)$.  This is the case, since  $\pi_1\Big(K^{(2)}\Big)=\{0\}$. It follows that $\frac{U}{N}$ is not contained in $K^{(3)}$.
As $\frac{K^{(2)}+d}{N}$ and $\frac{K^{(2)}+d'}{N}$ are both contained in $K^{(3)}$, $\frac{J}{N}$ is contained in $K^{(3)}$ and $\frac{U}{N}$ is a component of $\bbR^2\setminus\frac{J}{N}$. This is stupid, since $\frac{U}{N}$ is not contained in $K^{(3)}$ and $\pi_1\left(K^{(3)}\right)=\{0\}$ by assumption.

Third, we use {\bf Claim 2} to obtain the following.


{\bf Claim 3}. If there are $d, d'\in\Dc$ with $d-d'=(1,0)$, then there exists no  $0\le u<v\le N-1$ such that $\{0,N-1\}\times
\{u,v\}\subset\Dc$. Similarly, if there are $d, d'\in\Dc_1$ with $d-d'=(0,1)$, then there exists no  $0\le u<v\le N-1$ such that 
$\{u,v\}\times\{0,N-1\}\subset\Dc$. 

Suppose on the contrary that there were such integers $u<v$. Then $K^{(2)}$ would contain each of the  sets
$\frac{[0,1]^2\!+(N\!+\!1)d''}{N^2}$ with $d''\in\{0,N-1\}\times\{u,v\}$. See Figure \ref{fig:claim_3}, 
\begin{figure}[ht]
\begin{center}
\includegraphics[width=7.5cm]{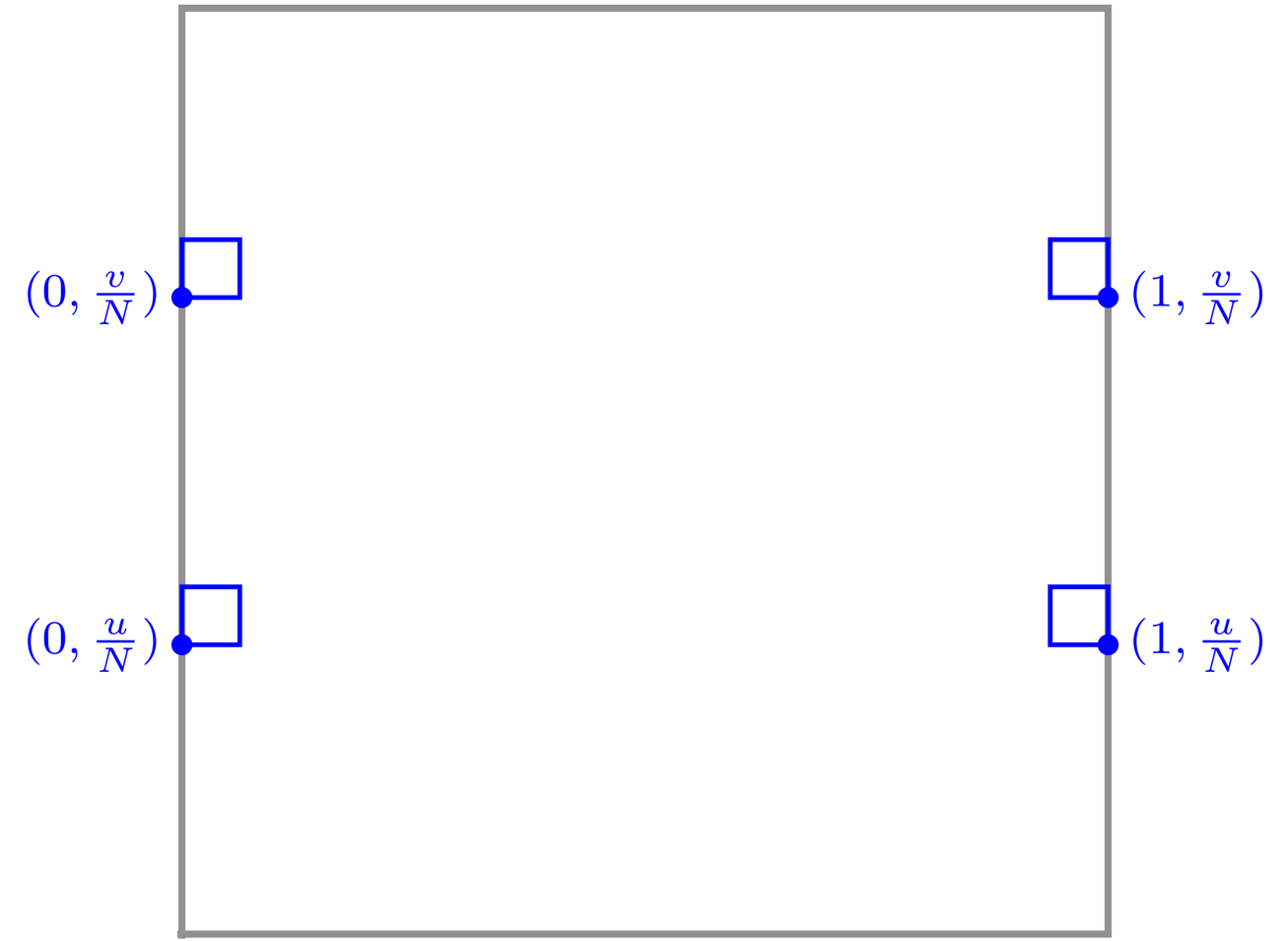}\end{center}\vspace{-0.75cm}
\caption{Locations in $[0,1]^2$ of the four points  
$(0,\frac{u}{N}),(0,\frac{v}{N}),(1,\frac{u}{N}),(1,\frac{v}{N})$.}\label{fig:claim_3}
\end{figure}
which illustrate the relative locations of the four squares $\frac{[0,1]^2+d''}{N}$ with 
$d''\in\{0,N-1\}\times\{u,v\}$. 
Since we assume that there are $d, d'\in\Dc$ with $d-d'=(1,0)$, $\left(K^{(2)}+d\right)\cap\left(K^{(2)}+d'\right)$ contains 
\[
\left\{d+(0,t): \frac{u}{N}+\frac{u}{N^2}\le t\le\frac{u}{N}+\frac{u+1}{N^2}\ \text{or}\ 
\frac{v}{N}+\frac{v}{N^2}\le t\le\frac{v}{N}+\frac{v+1}{N^2}\right\}.
\]
By applying   {\bf Claim 2} we may conclude  that $K$ contains the four corners of $[0,1]^2$ and hence $\partial[0,1]^2$. This contradicts {\bf Claim 1}.

Now we complete the proof by pointing out three basic observations. First,  $\left\{\frac{K^{(n)}+d}{N}: d\in\Dc\right\}$ is a cover of $K^{(n+1)}$. Second,  from {\bf Claim 2} and {\bf Claim 3} it follows that for any $d\ne d'$ in $\Dc$ and any  $n\ge2$  the intgersection $\frac{K^{(n)}+d}{N}\bigcap\frac{K^{(n)}+d'}{N}$  is either empty or a singleton. Third, due to the van Kampen Theorem (Ref. \cite[p.43, Theorem 1.20]{Hatcher02}), it is immediate that $\pi_1\left(K^{(n+1)}\right)=\{0\}$. \end{proof}

To conclude this section, we give three fractal squares. The one in Example \ref{exmp:K_and_Pi} shows that the converse of Theorem \ref{theo:K3_Pi1} is not true. The one given in Example \ref{exmp:no_line_dendrite} is a dendrite, since it satisfies the conditions of Theorem \ref{theo:K3_Pi1}. However, the one given in Example \ref{exmp:no_line} has a nontrivial fundamental group hence  does not satisfy the conditions of Theorem \ref{theo:K3_Pi1}. Here we note that neither of the two fractal squares given in Examples \ref{exmp:no_line_dendrite} and \ref{exmp:no_line} contains a line segment.

\begin{exam}\label{exmp:K_and_Pi}
Let $K=K_{\Dc}$ be a fractal square of order five, whose first and sixth approximations are illustrated in the left and the middle of Figure \ref{fig:K3_Pi1}. Notice  that none of $\pi_1\left(K^{(n)}\right)$ is trivial while $\pi_1(K)$ is. Let $d'\notin \Dc$ be the digit for which  $f_{d'}[0,1]^2=\frac{[0,1]^2+d'}{5}$ is depicted in the left of Figure \ref{fig:K3_Pi1}. Then the resulting fractal square $K'$ has a nontrivial hence uncountable fundamental group. See the right of Figure \ref{fig:K3_Pi1} for the sixth approximation of $K'$.
\begin{figure}[ht]
\begin{center}\begin{tabular}{ccc}
\includegraphics[width=5.5cm]{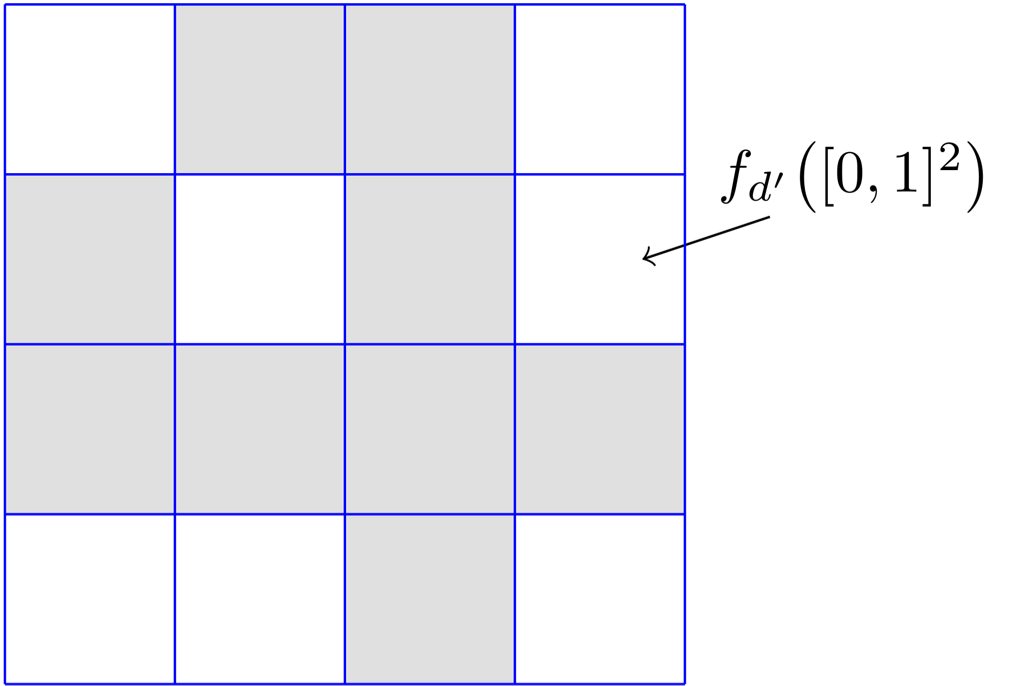}& \includegraphics[width=3.75cm]{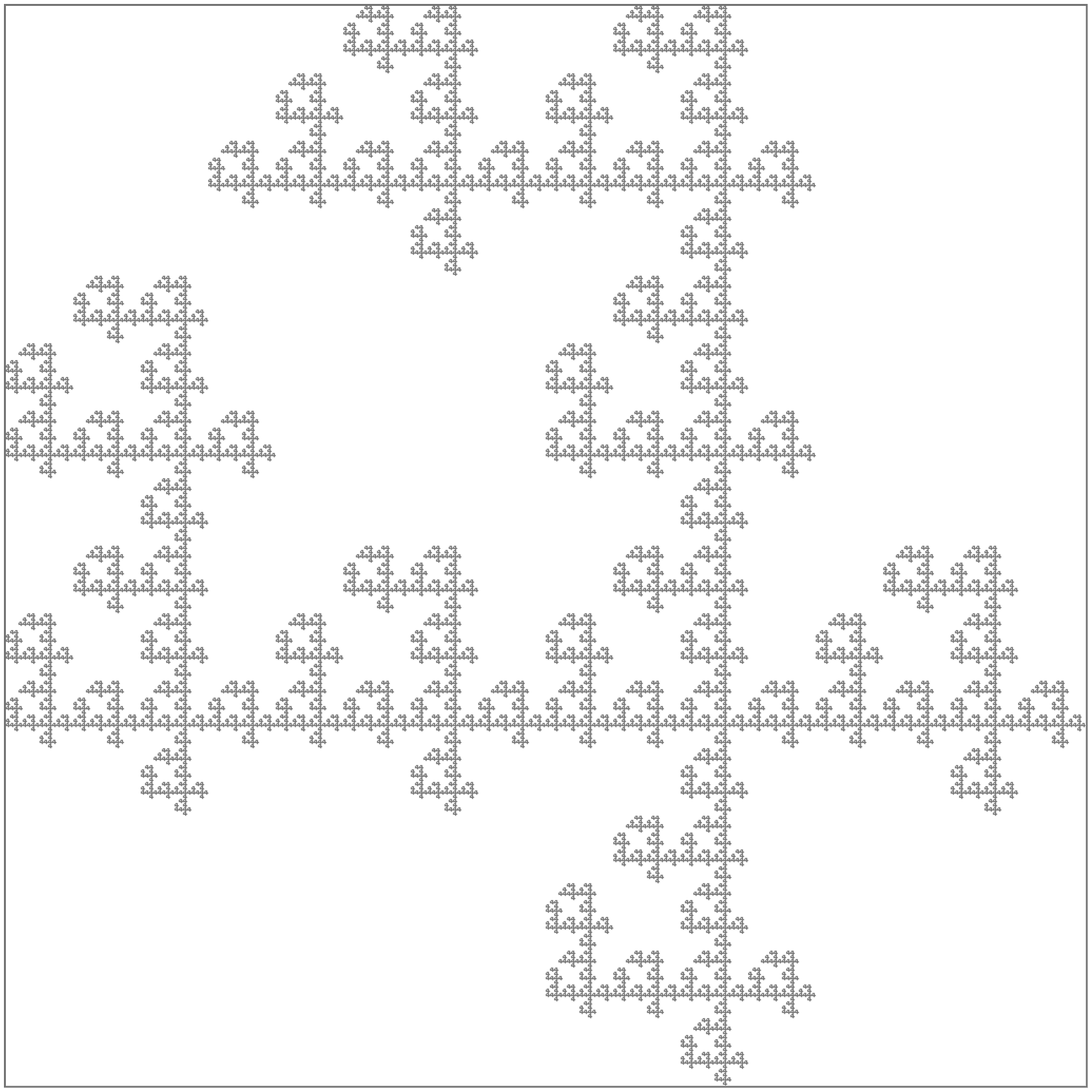}
& \hspace{0.5cm} \includegraphics[width=3.75cm]{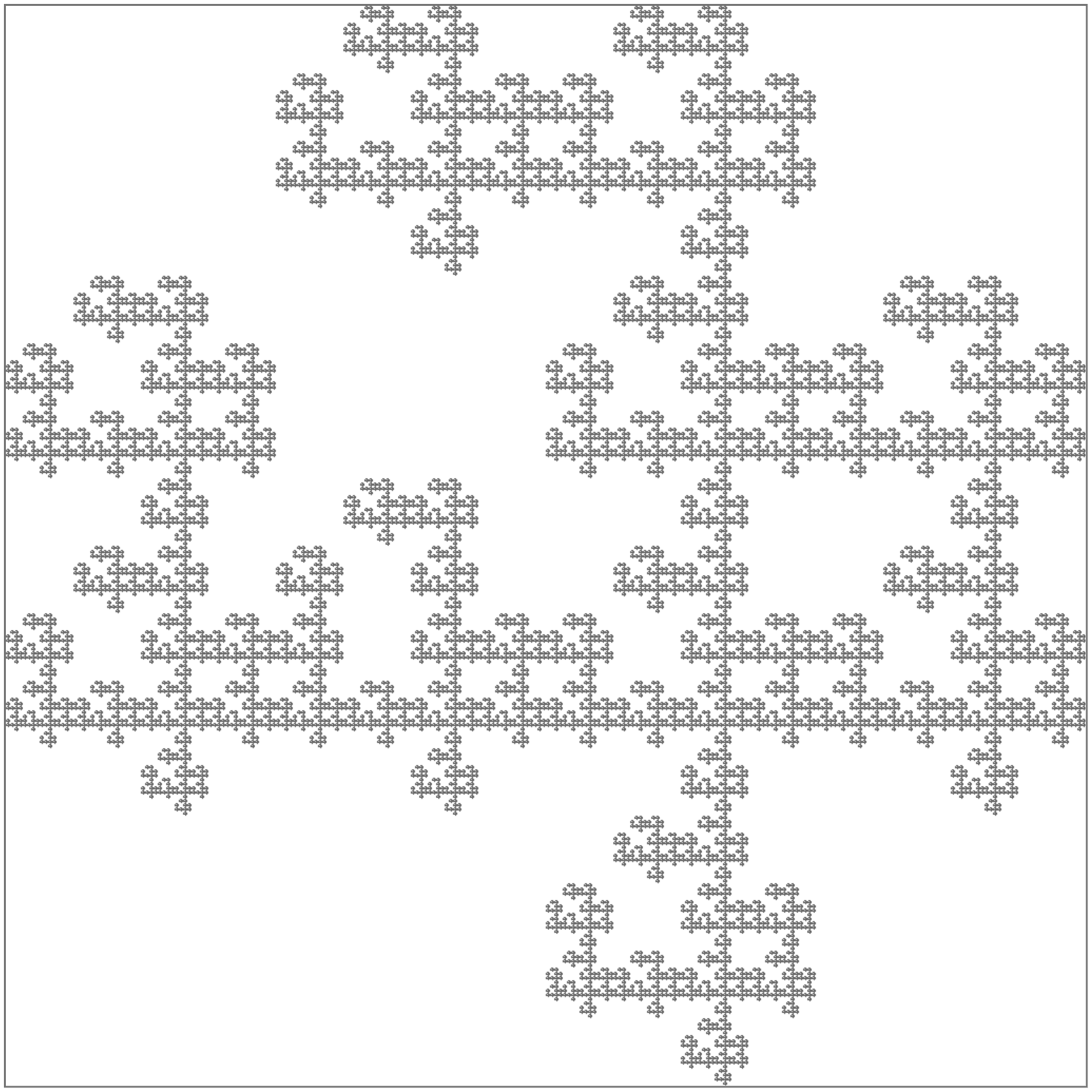}
\end{tabular}
\end{center}\vspace{-0.75cm}
\caption{ $K^{(1)}$ and $K^{(6)}$ (left and middle); the sixth approximation for $K'$.}\label{fig:K3_Pi1}
\end{figure}
\end{exam}

\begin{exam}\label{exmp:no_line_dendrite}
Let $K=K_{\Dc}$ be a fractal square of order four, whose first approximation and first Hata graph $G_1$ are illustrated in the left of Figure \ref{fig:noline_dendrite}. It is easy to check that (1) $G_1$ is a tree and (2) $\pi_1\left(K^{(1)}\right)=\pi_1\left(K^{(2)}\right)=\{0\}$. See the middle of Figure \ref{fig:noline_dendrite} for the approximations 
$K^{(1)}\supset K^{(2)}$ and the intersection of $\frac{K^{(1)}+d}{4}$ with $\frac{K^{(1)}+d'}{4}$ for $d\ne d'$ lying in $\Dc$. From these details, it is easy to further infer that $\pi_1\left(K^{(3)}\right)=\{0\}$. By Theorem \ref{theo:K3_Pi1},  $\pi_1(K)$ is trivial hence $K$ is a dendrite. Notice that there is no line segment contained in $K$.
\begin{figure}[ht]
\begin{center}
\begin{tabular}{cc}
\includegraphics[width=8cm]{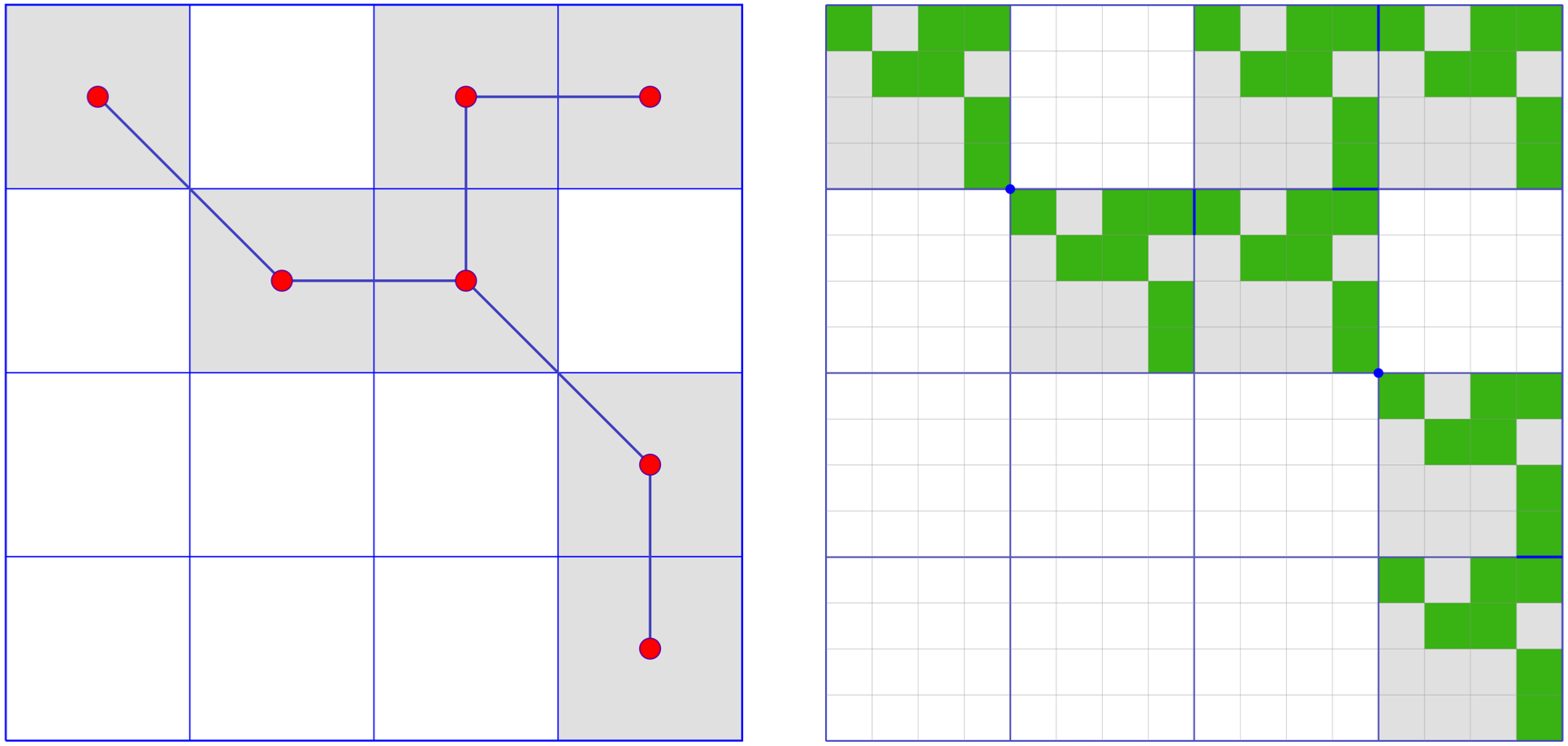}
& \includegraphics[width=3.8cm]{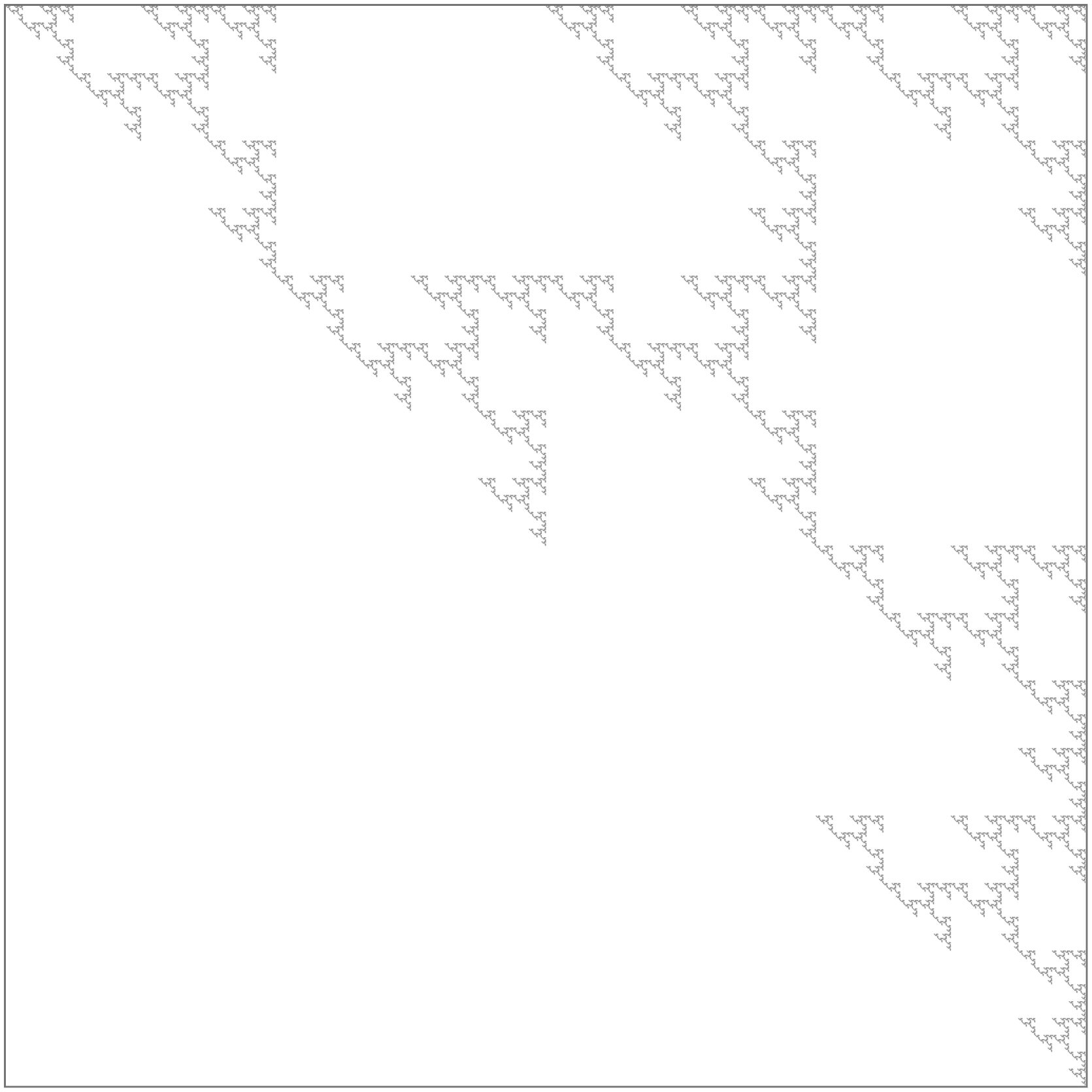}
\end{tabular}
\end{center}\vspace{-0.75cm}
\caption{$K^{(1)}$ and $G_1$ {\bf(left)}; \ $K^{(1)}\supset K^{(2)}$ {\bf(middle)}; and $K^{(6)}$ {\bf(right)}.}\label{fig:noline_dendrite}
\end{figure}
\end{exam}

\begin{exam}\label{exmp:no_line}
Let $K=K_{\Dc}$ be a fractal square of order four, whose first approximation and first Hata graph $G_1$ are illustrated in the left of Figure \ref{fig:noline_nondendrite}. It is easy to check that (1) $G_1$ is a tree and (2)  $\pi_1\left(K^{(1)}\right)=\{0\}$ while $\pi_1\left(K^{(n)}\right)\ne\{0\}$ for all $n\ge2$. See the middle of Figure \ref{fig:noline_nondendrite} for the approximations 
$K^{(1)}\supset K^{(2)}$. Notice that for certain $d\ne d'$ lying in $\Dc$,  the intersection of $\frac{K^{(1)}+d}{4}$ with $\frac{K^{(1)}+d'}{4}$ may be disconnected. Moreover, we can infer that $\pi_1(K)\ne\{0\}$. Again, there is no line segment contained in $K$.
\begin{figure}[ht]
\begin{center}
\begin{tabular}{cc}
\includegraphics[width=8cm]{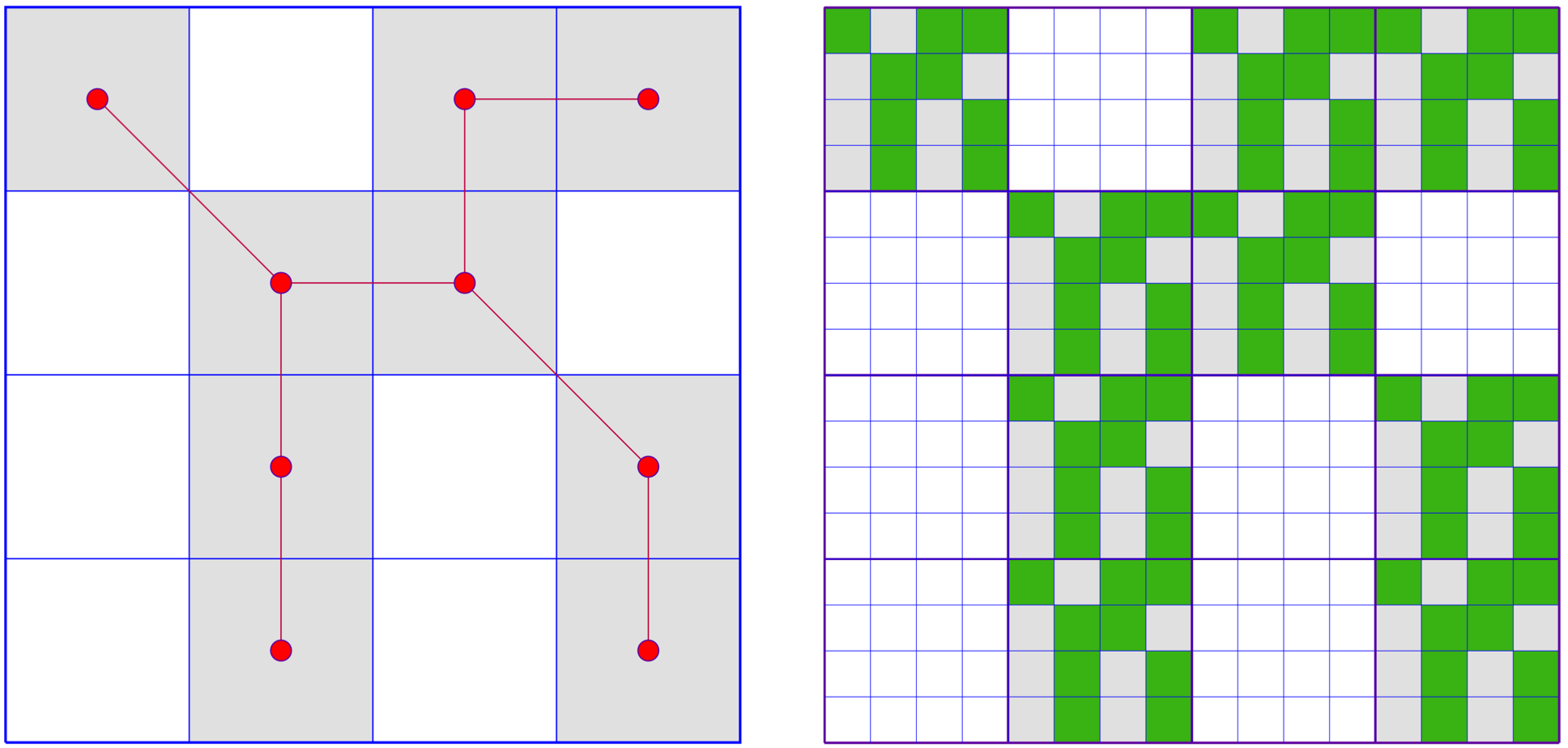}
& \includegraphics[width=3.8cm]{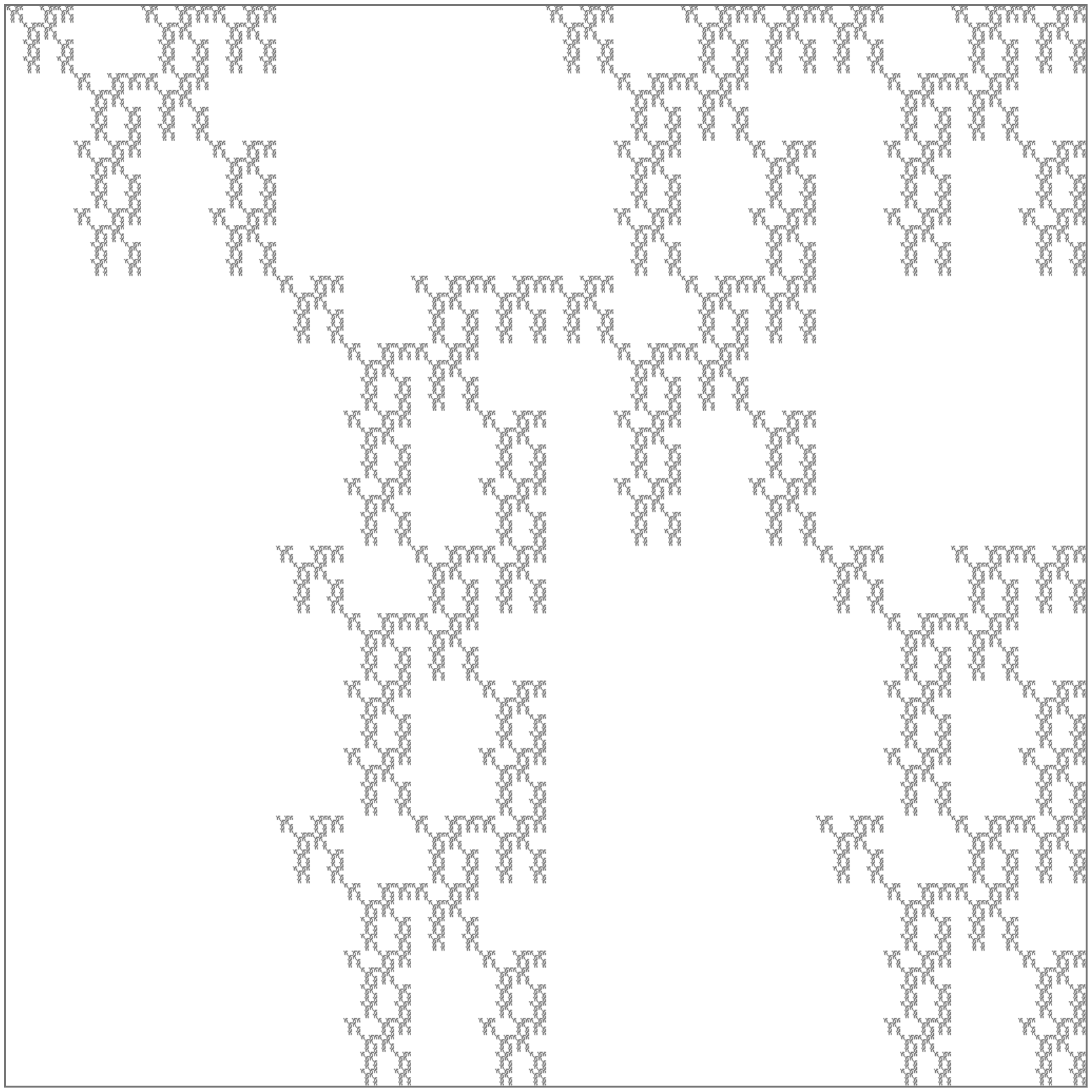}
\end{tabular}
\end{center}
\vspace{-0.75cm}
\caption{$K^{(1)}$ and $G_1$ {\bf(left)}; \ $K^{(1)}\supset K^{(2)}$ {\bf(middle)}; and $K^{(6)}$ {\bf(right)}}\label{fig:noline_nondendrite}
\end{figure}
\end{exam}

\section{Proving Theorem \ref{maintheo:LLR_improved}}\label{proof_1}

This section aims to prove Theorems \ref{maintheo:LLR_improved} and \ref{maintheo:product_form}. Let us first recall   \cite[Theorem 2.2]{Lau-Luo-Rao13}, which states that for any fractal square $K$ and the orbit $H=K+\bbZ^2$ there are exactly two possibilities: either all components of $\bbR^2\setminus H$ are unbounded or each of them is of diameter $\le\sqrt{2}(N^2+1)^2/N$.
In the first, all components of $K$ are points or line segments. So all atoms of $K$ are points or line segments hence $\lambda_K(K)\subset\{0,1\}$. In the second, $H$ has at least one component that separates the plane.
In order to obtain Theorem \ref{maintheo:LLR_improved}, we just need to show that $K$ is a Peano compactum in the latter case.
To do that, we will need the following.
\begin{lemm}[{\bf \cite[Theorem 3]{LLY-2019}}]\label{lem:LLY19-Thm3}
A compact set $K\subset\bbR^2$ is a Peano compactum if and only if it fulfills the Sch\"onflies condition, so that for any strip $U$  bounded by two parallel infinite lines $L_1$ and $L_2$ there are at most finitely many components in $\overline{U}\setminus K$ that intersect $L_1$ and $L_2$ both.
\end{lemm}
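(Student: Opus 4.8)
The plan is to prove both implications by contraposition, resting on one elementary planar fact that I will call the \emph{crossing principle}: if a connected set $S\subseteq\overline{U}$ meets both boundary lines $L_1,L_2$ of the strip $U$, then the orthogonal projection of $S$ onto the direction transverse to the lines is a connected subset of the closed interval between them containing both endpoints, so $S$ meets every line parallel to $L_1$ inside $U$; in particular $\diam S$ is at least the width of $U$. The plan then hinges on a \emph{dual} form of this fact: two disjoint continua that each cross $U$ cut out of the open strip a region trapped between them, and a Sch\"onflies/Jordan-type separation argument shows that this region must contain a complementary component of $K$ that also crosses $U$; symmetrically, consecutive crossing complementary components sandwich a crossing sub-continuum of $K$. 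These two observations---crossing continua force crossing complement, and conversely---are what make the intrinsic null/local-connectedness data interchangeable with the extrinsic crossing data.

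For the direction \emph{(not Sch\"onflies)} $\Rightarrow$ \emph{(not Peano)}, I would start from a strip $U$ crossed by infinitely many complementary components $V_1,V_2,\dots$, each meeting $L_1$ and $L_2$. Since $K$ is compact, all but the single unbounded complementary region lie in a bounded set, so the $V_i$ accumulate; moreover their accumulation point $p$ must lie in $K$, because a neighborhood of a point of the open complement meets only one complementary component. Between consecutive crossing $V_i$'s the dual crossing principle supplies a sub-continuum of $K$ that also crosses $U$, hence has diameter at least the width $w$ of $U$. A dichotomy then finishes the argument: if infinitely many of these crossing sub-continua lie in pairwise distinct components of $K$, then $K$ has infinitely many components of diameter $\ge w$ and the null condition fails; otherwise infinitely many lie in a single component $M$, and the domains $V_i$ act as barriers of diameter $\ge w$ separating these pieces of $M$ inside every small ball about $p$, so $M$ fails to be locally connected at $p$ by a topologist's-comb argument. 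Either way $K$ is not a Peano compactum.

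For the converse \emph{(not Peano)} $\Rightarrow$ \emph{(not Sch\"onflies)}, I would reduce both failure modes to a single configuration: a sequence of pairwise disjoint sub-continua $C_n\subseteq K$ of diameter bounded below by some $\delta>0$ that accumulate. If $K$ has infinitely many components of diameter $\ge C$, the $C_n$ are those components; if instead some component $M$ is not locally connected at a point $p$, the classical characterization of non-local-connectedness (a convergence continuum at $p$, equivalently the local failure of property~S) produces such $C_n$ inside $M$ accumulating at $p$. Passing to a subsequence, the $C_n$ converge in the Hausdorff metric to a continuum $C_\infty$ of diameter $\ge\delta$; after a rotation I would choose a thin strip $U$ transverse to a diameter of $C_\infty$, so that the crossing principle makes all but finitely many $C_n$ cross $U$. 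Finally the dual separation argument places, between consecutive crossing continua, a complementary component of $K$ crossing $U$, yielding infinitely many such and violating the Sch\"onflies condition.

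The step I expect to be the genuine obstacle is the dual separation argument---that two disjoint continua crossing a strip trap a crossing complementary component between them, and symmetrically that consecutive crossing complementary components sandwich a crossing sub-continuum of $K$. This is precisely the Sch\"onflies-flavored core of the statement, and making it rigorous requires honest plane topology (separation of the strip by crossing continua, accessibility of boundary points, and the structure of complementary domains of a planar compactum) rather than the soft projection argument behind the crossing principle. A secondary technical point is the bookkeeping needed to guarantee that a \emph{single} strip is crossed by infinitely many complementary components at once, which is what forces the subsequence selection and the Hausdorff-limit normalization used above.
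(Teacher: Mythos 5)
First, a point of orientation: the paper does not prove this statement at all --- it is imported verbatim as \cite[Theorem 3]{LLY-2019} and used as a black box, so there is no in-paper argument to compare yours against. Judged on its own terms, your first direction (failure of the Sch\"onflies condition implies $K$ is not a Peano compactum) is essentially sound: two crossing components of $\overline{U}\setminus K$ can only be separated inside the strip by a subcontinuum of $K$ joining $L_1$ to $L_2$ (the standard ``a closed set separating the two vertical edges of a topological rectangle contains a continuum joining the two horizontal edges'' lemma), and your dichotomy --- infinitely many large components of $K$ versus a convergence continuum inside a single component --- is exactly how one then contradicts the null condition or local connectedness. This is also the mechanism the paper itself exploits inside the proof of Theorem A, where crossing arcs $\alpha_i$ lying in distinct components of $\overline{U}\setminus K$ are converted into infinitely many crossing continua $Q_k$ of $W\cap K$ by invoking \cite[Lemma 3.8]{LLY-2019}.

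The converse direction, however, has a genuine gap, and it sits precisely at the step you flagged. The ``dual separation argument'' in the form you need it --- two disjoint subcontinua of $K$ crossing a strip trap between them a component of $\overline{U}\setminus K$ crossing the strip --- is false. Take $K=[0,1]^2$ and $U=\bbR\times[\frac14,\frac34]$: the pairwise disjoint segments $\{x_n\}\times[0,1]$ cross $U$ and accumulate, yet there is no complementary component between any two of them, $\overline{U}\setminus K$ has only two crossing components, and $K$ is a Peano compactum. So your reduction of ``not Peano'' to ``a sequence of pairwise disjoint accumulating continua of diameter $\geq\delta$'' discards exactly the information that matters: that configuration occurs in perfectly nice Peano compacta, so nothing about the Sch\"onflies condition can be deduced from it. The duality only runs one way --- crossing complementary components are always separated by crossing continua of $K$, but crossing continua of $K$ need not be separated by crossing complementary components, since $K$ may be connected across the gap. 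To repair the argument you must keep the two failure modes separate: when $K$ has infinitely many large components, use that the $C_n$ lie in \emph{distinct components of the compactum} $K$, so a plane-separation theorem for compacta provides separating arcs in $\bbR^2\setminus K$ which are then shown to cross the strip; when some component fails to be locally connected at $p$, use the distinct components of $K\cap\overline{B}(p,r)$ and separating polygonal arcs in the complement of that smaller compactum, which must then be pushed into $\bbR^2\setminus K$. That separation work is the real content of \cite[Theorem 3]{LLY-2019} and is not recoverable from the normalized configuration you reduced to.
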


Hereafter in this section, $K=K(N,\Dc)$ ia  a fractal square of order $N\ge3$ such that that all components of $\bbR^2\setminus H$ have a diameter $\le\sqrt{2}(N^2+1)^2/N$. Moreover, $H=K+\bbZ^2$.

\begin{proof} [{\bf Proof for Theorem \ref{maintheo:LLR_improved}}]
If $K$ were not a Peano compactum then by Lemma \ref{lem:LLY19-Thm3} we could find two parallel lines $L_1$ and $L_2$ that bound an unbounded strip $U$, with $\partial U=L_1\cup L_2$, such that $\overline{U}\setminus K$ has infinitely many components, say $A_i$, that intersect both $L_1$ and $L_2$. In each $A_i$ we may pick a Jordan arc $\alpha_i$ joining a point $a_i\in L_1$ to a point $b_i\in L_2$. Moreover, we may require that $\alpha_i\cap(L_1\cup L_2)=\{a_i,b_i\}$. Going to an appropriate subsequence, if necessary, we may assume that $\lim\limits_{i\rightarrow\infty}\alpha_i=\alpha_\infty$ under the Hausdorff distance and $a_\infty=\lim\limits_{i\rightarrow\infty}a_i$, $b_\infty=\lim\limits_{i\rightarrow\infty}b_i$. For $r>0$ small enough, we may pick some integer $n_0>0$ and some $i\in\left\{0,1,\ldots, N^{n_0}-1\right\}$ and  separate the closed disks $D_r(a_\infty)$ and $D_r(b_\infty)$ by a strip $W$, which is either of the form 
$\bbR\times\left[\frac{i}{N^{n_0}},\frac{i+1}{N^{n_0}}\right]$ or of the form $\left[\frac{i}{N^{n_0}},\frac{i+1}{N^{n_0}}\right]\times\bbR$.  
We may assume that $W=\bbR\times\left[\frac{i}{N^{n_0}},\frac{i+1}{N^{n_0}}\right]$ and consider $\alpha_i$ as a map of $[0,1]$ into $\overline{U}$, with $\alpha_i(0)=a_i, \alpha_i(1)=b_i$. So we can pick $t_i<s_i$ such that $a_i^*=\alpha_i(t_i)\in L'$, $b_i^*=\alpha_i(s_i)\in L''$, and $\alpha_i([t_i,s_i])\subset W$. Here $\displaystyle L'=\bbR\times\left\{\frac{i}{N^{n_0}}\right\}$ and $\displaystyle L''=\bbR\times\left\{\frac{i+1}{N^{n_0}}\right\}$. See Figure \ref{fig:Find_W} for relative locations of $L',L'',L_1,L_2$.
\begin{figure}[ht]
\vspace{-0.2cm}
\center{
\includegraphics[width=8.5cm]{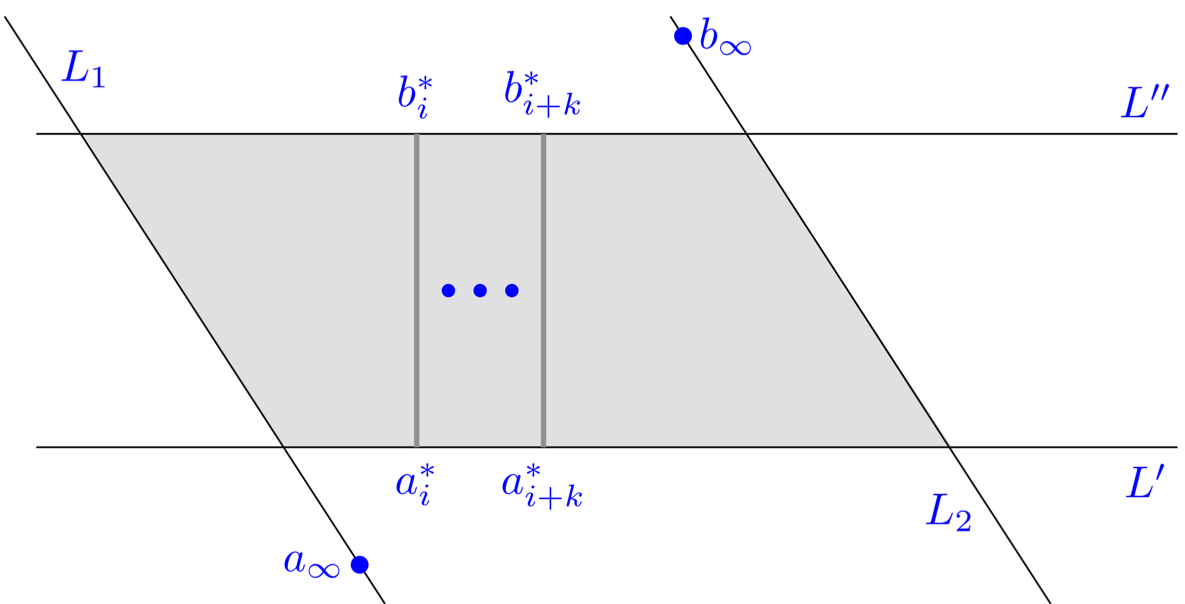}
}
\vspace{-0.5cm}
\caption{Relative locations of the points $a_\infty,b_\infty, a_i^*, b_i^*$ and the lines $L_1,L_2, L',L''$.}\label{fig:Find_W}
\end{figure}
Let $\alpha_i^*\subset\alpha_i$ be the sub-arc from $a_i^*$ to $b_i^*$.
Then, no two of the arcs $\alpha_i$ can be contained in a single component of $W\setminus K$. It follows that $W\cap K$ has infinitely many components, say $Q_k$, that intersect $L'$
and $L''$ both. See for instance \cite[Lemma 3.8]{LLY-2019}.

Now, let $V$ be the only component of $W\setminus(L'\cup L'')$ whose boundary intersects $L'$ and $L''$ both. In Figure \ref{fig:Find_W} the region  $V$ is represented as a shadowed parallelogram and $\alpha_i^*$ as a vertical segment.

It is immediate that  all but finitely many of the arcs $\alpha_i^*$ is contained in $\overline{V}$. By going to an appropriate subsequence, if necessary, we may assume that $\lim\limits_{k\rightarrow\infty}Q_k=Q_\infty$ under the Hausdorff distance. Here, the limit continuum $Q_\infty$ is necessarily contained in a single component  of $W\cap K$, to be denoted by $Q_0$.
See Figure \ref{fig:strip_W} for a natural order among the connected sets $Q_k$.
\begin{figure}[ht]
\vspace{-0.2cm}
\center{
\includegraphics[width=9cm]{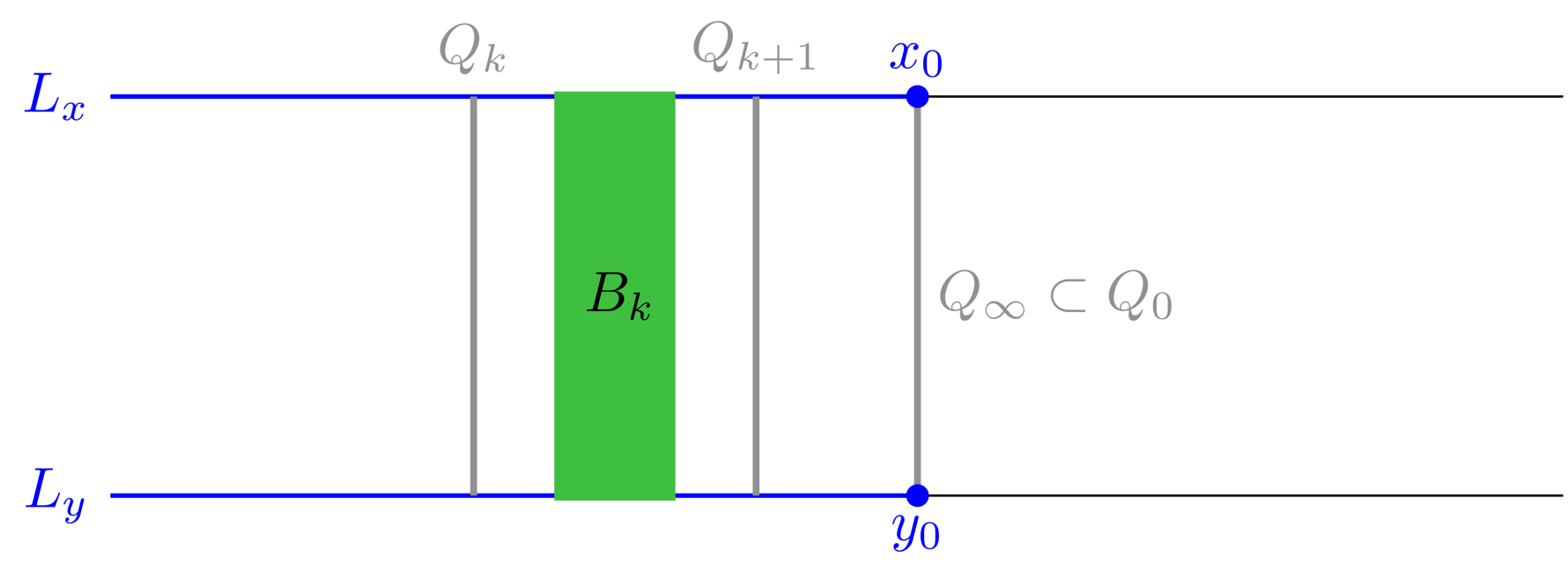}
}
\vspace{-0.5cm}
\caption{The relative locations of $Q_k, B_k, Q_{k+1}$ and $Q_\infty$.}\label{fig:strip_W}
\end{figure}
Notice that $W\setminus Q_0$ has two unbounded components, one is to the left of $Q_\infty$ and the other to the right. By going to an appropriate subsequence, if necessary, we may assume that every $Q_k$ is to the left of $Q_{k+1}$. See Figure \ref{fig:strip_W}. Therefore, we can find infinitely many components $B_k(k\ge1)$ of $W\setminus K$  intersecting $L'$
and $L''$ both.

Here we choose $B_1$ so that it separates $Q_1$ and $Q_0$ in $W$. Moreover, given $B_1,\ldots,B_k$ for any $k\ge1$  we may further choose $B_{k+1}$ so that it separates $\left(\bigcup_{j=1}^{k+1}Q_j\right)\cup\left(\bigcup_{j=1}^{k}B_j\right)$ and $Q_0$ in $W$.
Clearly, $\lim\limits_{k\rightarrow\infty}B_k=\lim\limits_{k\rightarrow\infty}Q_k=Q_\infty$ under the Hausdorff distance.

Given $n>n_0$,  we consider all the $n$-cells (see Definition \ref{def:approx}) that intersect infinitely many $Q_k$. Such a cell is called a {\bf black cell of order $n$}. The other $n$-cells are referred to as the {\bf white cell of order $n$}. Notice that every black cell intersects infinitely many $B_k$.

Pick $x_0\in (Q_\infty\cap L'')$
and $y_0\in  (Q_\infty\cap L')$, such that the left component of
$L''\setminus\{x_0\}$ (denoted by $L_x$) and that of
$L'\setminus\{y_0\}$ (denoted by $L_y$) are disjoint from $Q_0$. Then, both of them intersect all $Q_k$ and all $B_k$. See Figure \ref{fig:strip_W}.

Let $M_n\subset K^{(n)}$ consist of all the black cells of order $n$. Notice that the interior $M_n^o$ has at most finitely many components (say $W_1,\ldots,W_p$), such that $\overline{W_i}\cap\overline{W_j}$ for any $i\ne j$ is a finite set. Using this fact,  one may infer that $M_n^o$ is actually connected. See  \cite[Lemma 4.5]{LRX-2022}. Therefore, $M_n$ is a continuum that contains $Q_\infty$ and hence all but finitely many $B_k$. Notice that $M_n$ has no cut point. Moreover, by Torhorst Theorem (\cite[p.106, (2.2)]{Whyburn42}), one of the two unbounded components of $W\setminus M_n$ is bounded by $L_x\cup L_y\cup\beta_n$, where $\beta_n\subset \partial M_n$ is an open arc lying in $W$  and joining a point on $L_x$ to a point on $L_y$.

Let $U_n$ be the bounded component of $\bbR^2\setminus\left(\beta_n\cup L_x\cup L_y\cup Q_0\right)$ with $\beta_n\subset\partial U_n$. Then it is routine to infer that all but finitely many $B_k$ are contained in the closure of $U_n$. Since $\bbR^2\setminus M_n$ has at most finitely many components, we can further infer that all but finitely many $B_k$ are contained in  $M_n^{o}\cup \partial W$.

Fix some $B_k$ with this property and pick an arc $\gamma_n\subset B_k$ that intersects  $L_x$ and $L_y$ both. Since $B_k\subset K^{(n)}\setminus K$, we see  that $\gamma_n$ is contained in the interior of $K^{(n)}\setminus K$ and that $N^n\gamma_n\subset(\bbR^2\setminus H)$. See \cite[Lemma 4.3]{LRX-2022}. Since $n>n_0$ is flexible and since the diameter of $\gamma_n$ is greater than $N^{-n_0}$, we may choose large enough $n$ such that the diameter of $N^n\gamma_n$ is greater than  $\sqrt{2}(N^2+1)^2/N$. This is absurd, since all components of $\bbR^2\setminus H$ are of diameter $\le\sqrt{2}(N^2+1)^2/N$.
\end{proof}

Notice that the above proof  uses \cite[Theorem 2.2]{Lau-Luo-Rao13} and  is independent of \cite[Theorem 2]{LRX-2022}. In the rest of this section, we deal with Theorem \ref{maintheo:product_form}. Let us start from some terminology.

\begin{deff}\label{def:product_form}
A fractal square $K=K(N,\Dc)$ of order $n$ (or its digit set $\Dc$) is said to be {\bf of product form} provided that there exist $i_1<\cdots<i_j$ in $\{0,1,\ldots,N-1\}$ such that
$\Dc=\{i_1,\ldots,i_j\}\times\{0,1,\ldots,N-1\}$ or $\Dc=\{0,1,\ldots,N-1\}\times\{i_1,\ldots,i_j\}$. \end{deff}

A fractal square $K$ with product form is the product of $[0,1]$ and a homogeneous linear Cantor set $E$ with ratio $\frac1N$. Such a set is just the attractor of an IFS consisting of $2\le q\le n-1$ maps of the form $g_i(x)=\frac{t+s_i}{N}$ with $s_i\in\{0,1,\ldots,N-1\}$.

Theorem \ref{maintheo:product_form} is implied by the following.
\begin{theo}\label{theo:product}
 A fractal square $K=K(N,\Dc)$ has a product form if and only if $\lambda_K(K)=\{1\}$
\end{theo}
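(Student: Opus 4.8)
The plan is to prove the two implications separately, in each case reducing to the structure of the core decomposition $\Dc_K^{PC}$ (Definition \ref{def:CD}) by means of the Dichotomy Theorem \ref{maintheo:LLR_improved}.

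For the forward implication, suppose $K=K(N,\Dc)$ has product form; I may assume $\Dc=\{i_1,\ldots,i_j\}\times\{0,1,\ldots,N-1\}$ with $2\le j\le N-1$, the extreme cases $j=1$ and $j=N$ giving a single segment and the full square $[0,1]^2$ respectively, both of which are Peano continua with $\lambda_K(K)=\{0\}$. Then $K=E\times[0,1]$, where $E\subset[0,1]$ is the linear Cantor set (with no isolated point) generated by the digits $i_1,\ldots,i_j$. Since $E$ is uncountable, $K$ has uncountably many components $\{e\}\times[0,1]$ of diameter $1$, so $K$ is not a Peano compactum and we are in case (1) of Theorem \ref{maintheo:LLR_improved}. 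I would then identify $\Dc_K^{PC}$ explicitly as the family of full vertical segments $\{e\}\times[0,1]$, $e\in E$. This decomposition is into subcontinua and has quotient homeomorphic to $E$, hence Peano; the point is that it is the \emph{finest} such one, and here upper semicontinuity does the work. Because $E$ has no isolated point, each $e_0\in E$ is a limit of points $e_n\in E$, so the segments $\{e_n\}\times[0,1]$ converge in the Hausdorff metric to the whole segment $\{e_0\}\times[0,1]$, forcing the atom through each point of $\{e_0\}\times[0,1]$ to contain that entire segment. Consequently every order one atom is a nondegenerate arc, no singleton is an order one atom, and within each atom (an arc, hence Peano) every point is an order two atom; thus $\lambda_K(x)=1$ for all $x\in K$.

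For the converse, suppose $\lambda_K(K)=\{1\}$. Since $0\notin\lambda_K(K)$, $K$ is not a Peano compactum (a Peano compactum has $\lambda_K\equiv0$ on itself, its core decomposition being trivial), so Theorem \ref{maintheo:LLR_improved} places us in case (1): every component of $K$ is a point or a line segment. First I would rule out point components: if $\{p\}$ were a component, then the order one atom through $p$, being a subcontinuum of $\{p\}$, equals $\{p\}$, giving $\lambda_K(p)=0$, a contradiction. Hence every component is a nondegenerate segment, and all of them share a single slope $\tau=\tfrac{r}{s}$. The hypothesis $\lambda_K\equiv1$ now reads: $\Dc_K^{PC}$ has \emph{no} singleton element, i.e.\ every point of $K$ lies in a nondegenerate atom. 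Using the Sch\"onflies/Peano characterization (Lemma \ref{lem:LLY19-Thm3}) together with upper semicontinuity, an isolated segment, or a segment belonging to a null sequence of segments, could be left uncollapsed (its points forming singleton atoms) without destroying the Peano quotient, so the finest decomposition would leave singletons there and $\lambda_K$ would vanish on such a segment. Therefore $\lambda_K(K)=\{1\}$ forces every segment to be non-isolated and to be accumulated by other segments of comparable diameter.

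The geometric heart of the converse, which I expect to be the main obstacle, is to upgrade this ``thickness everywhere, no short segments'' property into the conclusion that the segments are full chords of $[0,1]^2$ parallel to an axis. I would argue in two steps, each exploiting self-similarity of $K=K(N,\Dc)$. First, if $\tau\notin\{0,\infty\}$ then chords of $[0,1]^2$ of slope $\tau$ have lengths depending on their intercept, so the smaller self-similar copies of $K$ contribute segments of strictly smaller diameter; tracking these across scales produces a null sequence of isolated short segments, on which $\lambda_K=0$, a contradiction. Hence $\tau\in\{0,\infty\}$, say $\tau=0$ (horizontal). Second, if some component were a proper chord $[a,b]\times\{y\}$ with $[a,b]\subsetneq[0,1]$, its interior endpoint would, under the subdivision $K^{(n)}=K^{(1)}\bigotimes K^{(n-1)}$, again seed a null sequence of isolated short horizontal segments and hence a point with $\lambda_K=0$. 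Thus every component is a full horizontal chord $[0,1]\times\{y\}$, so $K=[0,1]\times E$ for a compact $E\subset[0,1]$; the set equation $K=\bigcup_{d\in\Dc}f_d(K)$ then forces all $N$ horizontal digits to be present and $E$ to be the self-similar Cantor set of a digit set in the free coordinate, so that $\Dc=\{0,\ldots,N-1\}\times\{i_1,\ldots,i_j\}$, which is exactly the product form of Definition \ref{def:product_form}. The delicate point throughout is to make the phrase ``seeds a null sequence of isolated short segments'' precise, i.e.\ to combine upper semicontinuity, the finest-ness of $\Dc_K^{PC}$, and the scaling $x\mapsto (x+d)/N$ so as to exhibit, from any non-axis slope or any non-full chord, genuine singleton atoms and therefore a point with $\lambda_K=0$.
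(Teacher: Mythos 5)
Your forward direction is fine (the paper treats it as immediate), and the first reductions in your converse are correct: a point component is a singleton order-one atom, so $\lambda_K(K)=\{1\}$ rules out point components, and Theorem \ref{maintheo:LLR_improved} then makes every component a nondegenerate segment of a common rational slope $\tau$. The gap is exactly where you flag it, and the mechanism you propose there would fail. You want to exclude $\tau\notin\{0,\infty\}$ (and, later, proper chords) by producing ``a null sequence of isolated short segments on which $\lambda_K=0$''. But shortness of a segment does not make its points singleton atoms: what matters is whether the segments of a given scale form a null family, and in the relevant fractal squares they typically do not. In $K(5,\Dc_3)$ of Example \ref{exmp:slope_1} the slope-one chords near a corner of $[0,1]^2$ have arbitrarily small diameter, yet there are uncountably many of every scale (their intercepts fill a perfect Cantor set), so all of them are collapsed by $\Dc_K^{PC}$ and carry $\lambda_K=1$; the points with $\lambda_K=0$ there are precisely the point components. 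Since you have assumed point components away, your route asks you to exhibit singleton atoms inside segments, and the self-similar structure gives you no way to do that; note also that components need not be chords of cells, since pieces from adjacent cells can glue into longer segments, so ``smaller copies contribute strictly shorter segments'' is not even a safe premise.

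The paper argues in the opposite direction: it shows that a non-product-form digit set always produces a point component, hence $0\in\lambda_K(K)$, which is the contradiction you need. Concretely, if $\Dc$ contains a full column (or row) but is not of product form, pick $d$ in a nonempty, non-full column; the fiber of $K$ over the vertical line through the fixed point of $f_d$ is totally disconnected, and since every nondegenerate component is a vertical segment, that fixed point is a point component. If $\Dc$ contains no full row or column, then $H$ contains no axis-parallel lines, so $\tau\notin\{0,\infty\}$; one then takes a line $l$ of slope $\tau$ in $H$ and the parallel supporting line of $K$ on the far side, observes that some cell $f_d\left([0,1]^2\right)$ meets that supporting line in a single corner, and uses the iterates $f_d^j(l)$, parallel lines of slope $\tau$, to squeeze the component of $K$ through the fixed point of $f_d$ down to that single point. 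Without these (or equivalent) constructions your converse does not close; with them, your preliminary ``no point components'' step already yields the contradiction directly, and the later steps of your outline become unnecessary.
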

\begin{proof}
The ``IF'' part is trivial. So we just prove the ``ONLY IF'' part. We will show that if $K$ is not of product form then $\lambda_K(K)\ne\{1\}$.

If $\Dc$ contains $\{i\}\times\{0,1,\ldots,N-1\}$ or $\{0,1,\ldots,N-1\}\times\{i\}$ for some $i$, then $0\in\lambda_K(K)$. For instance, if $\Dc\supset\{i\}\times\{0,1,\ldots,N-1\}$, then there exists $j\ne i$ with $\Dc\nsubseteq\{j\}\times\{0,1,\ldots,N-1\}$. From this we can infer that for any $d\in\Dc\cap\{j\}\times\bbR$ the fixed point of $f_d(x)=\frac{x+d}{N}$, denoted as $x_0$, is a point component of $K$.  This is due to two basic observations. First, the common part of $K$ and the line $\{x_0\}\times\bbR$ is a totally disconnected set, denoted by $X_0$. Second, every point of $X_0$ is a point component of $K$, since every non-degenerate component of $K$ is a vertical line segment. Consequently, we have $\lambda_K(x_0)=0\in\lambda_K(K)$.

The same argument still works, if $\Dc$ contains $\{0,1,\ldots,N-1\}\times\{i\}$.

In the following, we consider that case that for any $i$ neither of $\{i\}\times\{0,1,\ldots,N-1\}$ and $\{0,1,\ldots,N-1\}\times\{i\}$ is contained in $\Dc$. In such a case, $H$ does not contain any horizontal or vertical infinite lines.
We need to prove that  $\lambda_K(K)\ne\{1\}$.
If $\lambda_K^{-1}(1)=\emptyset$, we are done. So we only consider the case  $\lambda_K^{-1}(1)\ne\emptyset$. Then, it will suffice to show that $0\in\lambda_K(K)$.

By Theorem \ref{maintheo:LLR_improved},  all components of $\bbR^2\setminus H$ are unbounded and every component of $K$ is a singleton or a line segment. Moreover, $H$ contains straight lines of the same slope $\tau\in\bbQ\setminus\{0\}$. In particular, we can fix such a straight line  $l$ that cuts the interior of $[0,1]^2$ into two regions.
See Figure \ref{corner}.
\begin{figure}[ht]
\vspace{-0.25cm}
\center{
\includegraphics[width=6cm]{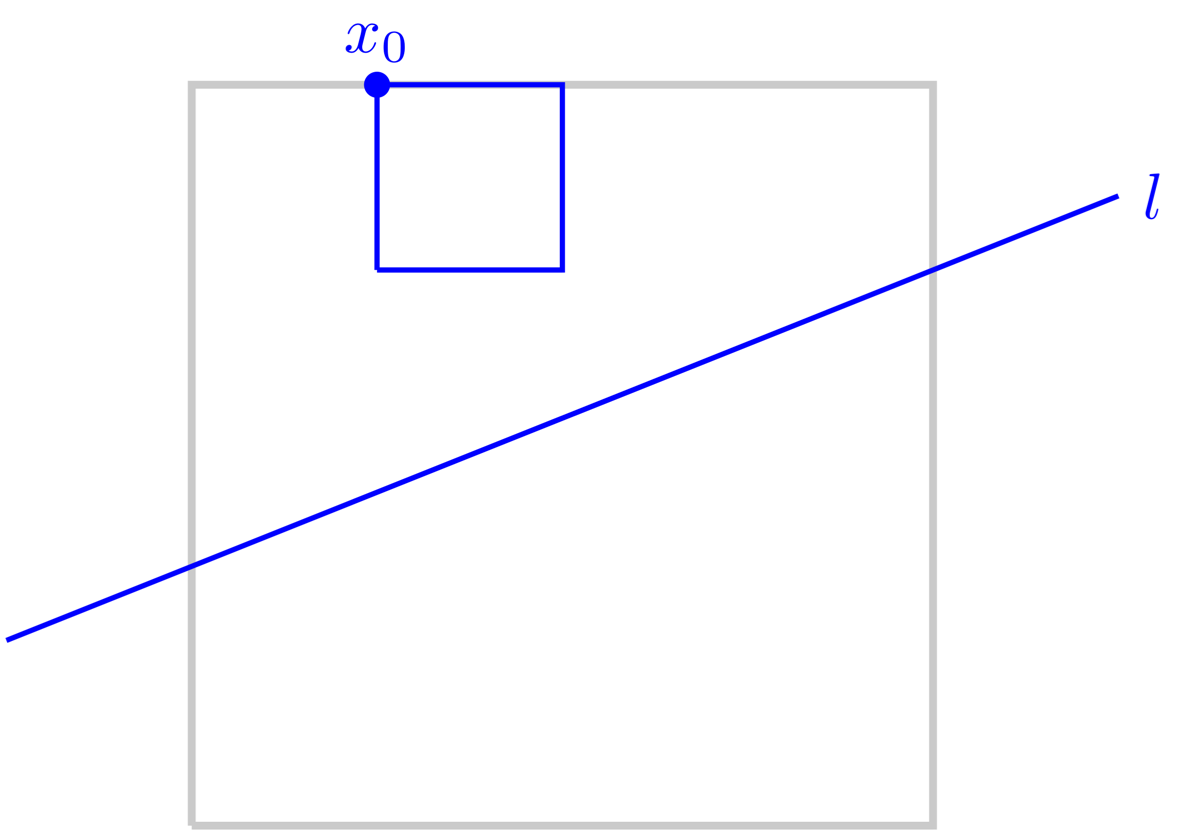}
}\vskip -0.382cm
\caption{The square $f_d\left([0,1]^2\right)$ and the point $x_0$, which may lie in $(0,1)^2$.}\label{corner}
\end{figure}
With no loss of generality, we may require that $l$ misses each of the four vertices of $[0,1]^2$ hence intersects two of the open segments $\{i\}\times(0,1)$ and $(0,1)\times\{i\}$, with $i=0,1$. Pick a component of $\bbR^2\setminus l$ that intersects $K$ and denote the other component by $U$. Let $\delta>0$ be the smallest number such that the closed  $\delta$-neighborhood of  $U$, denoted by $U_\delta$, contains $K_1$. Further pick $d\in\Dc$ such that $f_d\left([0,1]^2\right)$ intersects the boundary $\partial U_\delta$ at a single point $x_0$, which is necessarily a corner of  $f_d\left([0,1]^2\right)$. Here it is possible that $x_0$ lies in the interior of $[0,1]^2$.

Let $l_j=f_d^j(l)$, which is a straight line consisting of all points  $f_d^j(x)$ with $x\in l$. The countable intersection $\bigcap_{j\ge1}f_d^j\left([0,1]^2\right)$ consists of a single point, say $x_\infty$, which is just the unique fixed point of  $\displaystyle f_d(x)=\frac{x+d}{N}$. Let $U_j$ be the  component of $\bbR^2\setminus l_j$ that contains $x_0$. Then $U_j\supset U_{j+1}$ holds for all $j\ge1$. Moreover, the component of $K$ containing $x_\infty$ is a subset of $\bigcap_jU_j$. That is to say,  $\{x_0\}$ is a point component of $K$ and hence $\lambda_K(x_0)=0\in\lambda_K(K)$.
\end{proof}

\section{Proving Theorem \ref{maintheo:criterion}}\label{proof_3}

 Given a fractal square $K=K(N,\Dc)$, by Theorem \ref{maintheo:LLR_improved} we know that $K$ is a Peano compactum if only the complement of $H=K+\bbZ^2$ has no component that is unbounded. By Theorem \ref{maintheo:product_form}, we further know that $\lambda_K(K)=\{1\}$ if and only if $\Dc$ is of product form. In the current section, we assume  $\lambda_K(K)=\{0,1\}$ and characterize $\lambda_K^{-1}(1)$ by analyzing some of its special subset.

Those subsets, to be obtained in the sequel, will help us to  characterize all fractal squares $K$ that satisfy $\dim_H\lambda_K^{-1}(1)>1$ or   
$\dim_H\lambda_K^{-1}(1)=1$ or  $\lambda_K^{-1}(1)=\emptyset$.

Hereafter in this section, we fix $N\ge3$ and assume that $K$ has both points and line segments as components. In such a case, it is obvious that $K\ne[0,1]^2$. Moreover, let $\mathcal{G}_N$ denote the semi-group generated by the translation group $\bbZ^2$  and the $N$-homothety $\psi_N(x)=Nx$.

By Theorem \ref{maintheo:LLR_improved}, the line segments  contained in $K$ are parallel and have the same  rational slope $\tau=\frac{r}{s}\in\bbQ$, with $r\in\bbZ\setminus\{0\}$ and $s\in\bbZ_+$. The invariance of $H=K+\bbZ^2$  under $\psi_N$ implies that $H$ contains at least one line $l_{\omega_0}: x_2=\omega_0+\tau x_1$ and hence all the lines $l_{\omega_0}+\bbZ^2$.

Let $H_n=K^{(n)}+\bbZ^2$. Let  $\Pi(x)=x_2-\tau x_1$ for $x=(x_1,x_2)\in\bbR^2$.
Then for any closed set $X$ invariant under $\Gc_N$ the complement of $\Pi(X^c)$ consists of all $\omega$ such that  $l_\omega: x_2=\omega+\tau x_1$  is contained in $X$. Since $H_n\supset \psi_N\left(H_{n+1}\right)$ holds for all $n\ge1$, we shall have
\begin{equation}\label{eq:H_n^c}
H_n^c \ \subset\ \bbR^2\setminus \psi_N\left(H_{n+1}\right) 
=\psi_N\left(\bbR^2\setminus H_{n+1}\right)=\psi_N\left(H_{n+1}^c\right).
\end{equation}
Since $\psi_N\circ\prod(x)=\prod\circ\psi_N(x)$ foe all $x\in\mathbb{R}^2$, the containment  $\Pi\left(H_n^c\right)\subset\psi_N\circ\Pi\left(H_{n+1}^c\right)$ holds for all $n\ge1$. From this, it follows that 
\begin{equation}\label{equ:Pi_H^c}
\bbR\setminus\Pi(H_n^c) \ \supset \ \bbR\setminus\psi_N\circ\Pi(H_{n+1}^c)=\psi_N\left(\bbR\setminus\Pi(H_{n+1}^c)\right).
\end{equation}
To facilitate our presentation, we further set 
\begin{align}
\displaystyle \Omega& =\left[0,\frac1s\right]\setminus\Pi(H^c)\\
\displaystyle \Omega_n& =\left[0,\frac1s\right]\setminus\Pi\left(H_n^c\right)\ (n\ge1)
\end{align} 
Then we have some basic observations, which either are immediate or may be verified routinely.
\begin{itemize}
\item[(a)] $\Omega_1\supset\Omega_{2}\supset\cdots$ and $\Omega=\bigcap_j\Omega_j$. Moreover,  $\bbR\setminus\Pi(H^c)=\Omega+\frac1s\bbZ$ and $\bbR\setminus\Pi(H_j^c)=\Omega_j+\frac1s\bbZ$.

\item[(b)]  $\Omega_1$ contains $q=q_\Dc\le N$  isolated points $v_1,\ldots, v_q$ (or none) and $m=m_\Dc\le N$ (or none)  intervals, say $\displaystyle\left[\frac{u_1}{Ns},\frac{u_1+1}{Ns}\right],\ldots,
    \left[\frac{u_m}{Ns},\frac{u_m+1}{Ns}\right]$ with $0\le u_1\le\ldots\le u_m\le N-1$, each of which is a component of $\Omega_1$. 
    As $\omega_0\in\Omega_1$,  we have either $q=q_\Dc\ge1$
    (hence $A_1=\{v_1,\ldots,v_q\}\ne\emptyset$) or $m=m_\Dc\ge1$. 
\item[(c)] For each $v_i$ (if $q\ge1$), $l_{v_i}: x_2=v_i+\tau x_1$ lies in $\bigcap_jH_j$ hence $v_i\in\bigcap_j\left(\Omega_j+\frac{\bbZ}{s}\right)=\Omega+\frac1s\bbZ$.
\item[(d)] $\psi_N\left(\Omega_j\right)$ contains $\bigcup_{u=0}^{s-1}(\Omega_{j+1}+u)$ for all $j\ge1$. Check Equation (\ref{equ:Pi_H^c}). Thus $\Omega_j$ contains $\Phi\left(\Omega_{j+1}\right):=h_{u_1}\left(\Omega_{j+1}\right)\cup\cdots\cup h_{u_m}\left(\Omega_{j+1}\right)$, where $h_{u_i}(t)=\frac{t+u_i}{N}$. Check (b).

\item[(e)] $A_1\cup\Phi\left(\left[0,\frac1s\right]\right)\supset\Omega_1$. Check (b) and (d). Moreover, we have
\begin{equation}\label{eq:Omega_sequence_0}
    A_1\cup\Phi(A_1)\cup\Phi^2\left(\left[0,\frac1s\right]\right)\supset  \Phi(\Omega_1).
\end{equation}
By applying $\Phi$ repeatedly to both sides of (\ref{eq:Omega_sequence_0}), for each $n\ge2$ we shall have
\begin{equation}\label{eq:Omega_sequence_1}    A_1\cup\Phi(A_1)\cup\cdots\cup\Phi^{n-1}(A_1)\cup
    \Phi^n\left(\left[0,\frac1s\right]\right)\supset  \Phi^{n-1}(\Omega_1).
\end{equation}
\end{itemize}
With those observations, we have all the ingredients to obtain the following.
\begin{theo}\label{theo:Omega}
Assume  $m\ge2$ and set $E_n=\bigcup_{i=0}^{n-1}\Phi^i(A_1)$. Then   $\lim\limits_{n\rightarrow\infty}\left(\Phi^n\left(\left[0,\frac1s\right]\right)\cup E_n\right)$ exists and equals $\Omega$, under the Hausdorff distance. Consequently, $\dim_H\Omega=\frac{\log m}{\log N}$.
\end{theo}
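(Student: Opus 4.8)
The plan is to show that the set $\Phi^n([0,\frac1s])\cup E_n$ appearing in the statement is \emph{exactly} the $n$-th projection--survivor set $\Omega_n$, so that the claimed limit reduces to observation (a). Concretely, I would first prove by induction on $n$ the self-similar recursion
\[
\Omega_n \;=\; A_1\cup\Phi(\Omega_{n-1}),\qquad \Omega_0:=\Big[0,\tfrac1s\Big].
\]
The base case $n=1$ is precisely observation (b): the $m$ interval-components of $\Omega_1$ are the first-level cylinders $h_{u_1}([0,\frac1s]),\dots,h_{u_m}([0,\frac1s])$, whose union is $\Phi([0,\frac1s])$, and the remaining components are the isolated points forming $A_1$. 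Iterating the recursion then gives
\[
\Omega_n=A_1\cup\Phi(A_1)\cup\cdots\cup\Phi^{n-1}(A_1)\cup\Phi^n\Big(\Big[0,\tfrac1s\Big]\Big)=E_n\cup\Phi^n\Big(\Big[0,\tfrac1s\Big]\Big),
\]
which is exactly the set in the theorem. Since by (a) the sets $\Omega_n$ form a decreasing sequence of nonempty compacta with $\Omega=\bigcap_j\Omega_j$, they converge in the Hausdorff metric to their intersection; hence $\lim_{n\to\infty}\big(\Phi^n([0,\frac1s])\cup E_n\big)=\Omega$, as claimed.

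Establishing the recursion is the heart of the argument and, I expect, the main obstacle. The inclusion $A_1\cup\Phi(\Omega_{n-1})\subseteq\Omega_n$ splits into $A_1\subseteq\Omega_n$, which is observation (c) (each $v_i$ comes from a genuine line $l_{v_i}\subseteq\bigcap_jH_j$), and $\Phi(\Omega_{n-1})\subseteq\Omega_n$, i.e. $h_{u_i}(\Omega_{n-1})\subseteq\Omega_n$ for each $i$. The reverse inclusion $\Omega_n\subseteq A_1\cup\Phi(\Omega_{n-1})$ asserts that every level-$n$ survivor is either one of the persistent isolated points or descends through exactly one of the $m$ admissible bands to a level-$(n-1)$ survivor. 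Both are proved by unwinding the self-similar identity $H_n=\frac1N\big(K^{(n-1)}+(\Dc+N\bbZ^2)\big)$ and tracking how an infinite line $l_\omega$ of slope $\tau$ lies inside $H_n$: applying $\psi_N$ shows $l_\omega\subseteq H_n$ iff $l_{N\omega}\subseteq K^{(n-1)}+(\Dc+N\bbZ^2)$, and the admissible residues of $N\omega$ modulo the lattice are governed precisely by the digits $u_1,\dots,u_m$ recorded in (b). The one-directional containments noted in (d) and (e) are weaker consequences of this identity; the genuinely new content is the matching reverse containment, which requires checking that no slope-$\tau$ line can straddle two distinct bands while remaining in $H_n$. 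Here I would use that $H$ contains no horizontal or vertical line and that the bands $h_{u_i}([0,\frac1s])$ are pairwise disjoint subintervals of $[0,\frac1s]$.

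For the dimension statement I would read off the structure of $\Phi$: it is the Hutchinson operator of the iterated function system $\{h_{u_1},\dots,h_{u_m}\}$ of $m\ge2$ similarities of ratio $\frac1N$ whose first-level cylinders $h_{u_i}([0,\frac1s])$ are pairwise disjoint, so the open set condition holds with open set $(0,\frac1s)$. Its attractor $C$ therefore satisfies $\dim_H C=\frac{\log m}{\log N}$, the similarity dimension solving $mN^{-d}=1$. Since $\Phi^n([0,\frac1s])\subseteq\Omega_n$ for every $n$ and each $\Omega_n$ is closed, passing to the limit gives $C\subseteq\Omega$, whence $\dim_H\Omega\ge\frac{\log m}{\log N}$. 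For the upper bound I would use the covering furnished by the identity above: $\Omega\subseteq\Omega_n=E_n\cup\Phi^n([0,\frac1s])$, where $E_n$ is finite and $\Phi^n([0,\frac1s])$ is a union of $m^n$ intervals of length $s^{-1}N^{-n}$; the sum of their $d$-th powers is $s^{-d}(mN^{-d})^n=s^{-d}$ at $d=\frac{\log m}{\log N}$, so $\mathcal{H}^d(\Omega)<\infty$ and $\dim_H\Omega\le\frac{\log m}{\log N}$. Combining the two bounds yields $\dim_H\Omega=\frac{\log m}{\log N}$; note that the persistent part $\bigcup_i\Phi^i(A_1)$ is countable, hence of dimension zero, and does not affect the value.
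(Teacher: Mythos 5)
Your proof is correct and follows essentially the same route as the paper: both rest on observations (b)--(e), identify $\Phi$ as the Hutchinson operator of the IFS $\{h_{u_1},\dots,h_{u_m}\}$ so that $\lim_n\Phi^n\bigl(\bigl[0,\tfrac1s\bigr]\bigr)$ is its attractor, and obtain the dimension from the similarity dimension of that attractor together with the countability of $\bigcup_i\Phi^i(A_1)$. The only difference is one of packaging: you establish the exact recursion $\Omega_n=A_1\cup\Phi(\Omega_{n-1})$, hence the identity $\Omega_n=E_n\cup\Phi^n\bigl(\bigl[0,\tfrac1s\bigr]\bigr)$, and then invoke monotone convergence of decreasing compacta, whereas the paper proves the two inclusions of the limit statement separately (getting $E_\infty\subset\Omega$ by passing to limits of the lines $l_{\omega_n}$ and leaving the reverse inclusion to the iterated containment in (e)); the reverse inclusion $\Omega_n\subset A_1\cup\Phi(\Omega_{n-1})$ that you single out as the main obstacle is indeed the step the paper leaves implicit, and your sketch of it via $\psi_N(H_n)\subset H_{n-1}$ and the digit analysis is sound.
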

\begin{proof} On the one hand,  $\Phi^i(A_1)\subset\Omega$ for all $i\ge1$. Check (c) and (d). Thus $E_n\subset\Omega$ for all $n\ge1$. On the other,  the  limits  $\lim\limits_{n\rightarrow\infty}\Phi^{n}(A_1)$ and $\lim\limits_{n\rightarrow\infty}\Phi^{n}\left(\left[0,\frac1s\right]\right)$  under the Hausdorff distance 
both exist and coincide with the self-similar set $E_\infty$ determined by the IFS $\{h_{u_i}: 1\le i\le m\}$. Therefore, each  $\omega\in E_\infty$ may be approached by a sequence $\{\omega_n: n\ge1\}$ with $\omega_n\in\Phi^n(A_1)$. Every $\omega_n$ corresponds to an infinite line $l_{\omega_n}: x_2=\omega_n+\tau x_1$ that is contained in $\Omega$. This forces that $\omega\in\Omega$ and $\Omega=\lim\limits_{n\rightarrow\infty}\left(\Phi^n\left(\left[0,\frac1s\right]\right)\cup E_n\right)$. The second part is immediate, so we are done.
\end{proof}

\begin{rema}
The major ideas of Theorem \ref{theo:Omega} were already used in \cite[Theorem 1.2]{ZhangYF-20}. We summarize the fundamentals here and illustrate the logic behind the arguments
by using Barnsley's IFS theory of attractor with condensation \cite{Barnsley93}. 
\end{rema}

\begin{rema}
Besides the special case $m\ge2$ considered in Theorem \ref{theo:Omega}, there are other cases. For instance, if $m=1$ and $q\ge1$ then $\Omega=\lim\limits_{n\rightarrow\infty}E_n$, where $E_n=\bigcup_{i=0}^{n-1}\Phi^i(A_1)$. This set consists exactly of $\bigcup_nE_n$  and the only fixed point $\omega_1$ of $h_{u_1}:\bbR\rightarrow\bbR$. However, if $m\le1$ and $mq=0$ there are two sub-cases, either (1) $m=1$ and $q=0$ or (2) $m=0$ and $q\ge1$. In the first, $\Omega=\Omega_1=\{\omega_1\}$. In the second, $\Omega=\Omega_1=A_1$.
\end{rema}

Let  $\Hc=\Hc_K$ consist of all the infinite lines in $H$. With Theorem \ref{theo:Omega}, we have the following.

\begin{theo}\label{theo:intercept_set} Each of the following assertions hold:
\begin{itemize}
\item[\rm(i)] $\dim_H\lambda_K^{-1}(1)=\dim_H(\Hc\cap K)$.
\item[\rm(ii)] If $m\ge2$ then  $\dim_H\lambda_K^{-1}(1)=1+\frac{\log m}{\log N}$. 
\item[\rm(iii)] If $m=1$ and $q\ge1$, then $\dim_H\lambda_K^{-1}(1)=1$. 
\item[\rm(iv)] If  $m\le1$ and $mq=0$ then $\Omega$ is a finite set and $\lambda_K^{-1}(1)=\emptyset$. 
\end{itemize}
\end{theo}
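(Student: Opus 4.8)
The plan is to translate the whole statement into the geometry of the intercept set $\Omega$ and the family of parallel segments it indexes. Recall that in the case under consideration every non-degenerate component of $K$ is a line segment of slope $\tau=\tfrac rs$, and that by the definition of the lambda function $\lambda_K(x)=1$ holds exactly when $x$ lies in a non-degenerate element (necessarily a segment) of the core decomposition $\Dc_K^{PC}$, while $\lambda_K(x)=0$ holds exactly when $\{x\}$ itself is an element of $\Dc_K^{PC}$. Since each maximal segment in $K$ lies on a full line $l_\omega\subset H$ with $\omega\in\Omega+\tfrac1s\bbZ$, I would isolate three facts and assemble the theorem from them: (a) $\lambda_K^{-1}(1)\subseteq\Hc\cap K=\bigcup_{\omega}(l_\omega\cap K)$; (b) if $\omega$ is an accumulation point of $\Omega$ then the whole segment $S_\omega=l_\omega\cap K$ lies in $\lambda_K^{-1}(1)$; and (c) $\dim_H(\Hc\cap K)=1+\dim_H\Omega$.

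For (c) I would use that the affine shear $(t,\omega)\mapsto(t,\omega+\tau t)$ is bi-Lipschitz and carries any set of the form $[a,b]\times\Omega$ onto a subset of $\Hc\cap K$, while $\Hc\cap K$ itself is covered by finitely many $\tfrac1s$-translates of the image of $[0,1]\times\Omega$; since each fiber $l_\omega\cap K$ contains a segment of length bounded below (the components of $\Pi(H_1)$ having diameters bounded away from $0$), this yields $\dim_H(\Hc\cap K)=1+\dim_H\Omega$, and the same argument applied over $\Omega^{*}$ gives $\dim_H\big(\bigcup_{\omega\in\Omega^{*}}S_\omega\big)=1+\dim_H\Omega^{*}$, where $\Omega^{*}$ denotes the set of accumulation points of $\Omega$. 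Fact (a) is immediate from the Dichotomy Theorem. With (a)--(c) in hand, assertion (i) follows once I observe that $\Omega$ and $\Omega^{*}$ differ only by the at most countably many isolated points of $\Omega$, so $\dim_H\Omega=\dim_H\Omega^{*}$ whenever $\Omega^{*}\ne\emptyset$; then
\[
1+\dim_H\Omega^{*}=\dim_H\!\Big(\bigcup_{\omega\in\Omega^{*}}S_\omega\Big)\le\dim_H\lambda_K^{-1}(1)\le\dim_H(\Hc\cap K)=1+\dim_H\Omega=1+\dim_H\Omega^{*},
\]
forcing equality.

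The three numerical cases then drop out of Theorem \ref{theo:Omega} together with the description of $\Omega$. When $m\ge2$, $\Omega$ contains the self-similar Cantor set $E_\infty$ (the attractor of $\{h_{u_i}:1\le i\le m\}$), which is perfect and hence lies in $\Omega^{*}$; since $\dim_HE_\infty=\dim_H\Omega=\tfrac{\log m}{\log N}$, both bounds in the display equal $1+\tfrac{\log m}{\log N}$, giving (ii). When $m=1$ and $q\ge1$, $\Omega=\{\omega_1\}\cup\bigcup_{i\ge0}h_{u_1}^{\,i}(A_1)$ is countable with the single accumulation point $\omega_1$, so $\Omega^{*}=\{\omega_1\}\ne\emptyset$ and $\dim_H\Omega=0$, whence $\dim_H\lambda_K^{-1}(1)=1$, which is (iii). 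Finally, when $m\le1$ and $mq=0$, the remarks after Theorem \ref{theo:Omega} give $\Omega=\Omega_1$ finite, so $K$ carries only finitely many segment components alongside its point components; being a compactum whose components are all locally connected and of which only finitely many exceed any positive diameter, $K$ is itself a Peano compactum, its core decomposition is trivial, and $\lambda_K\equiv0$, proving (iv).

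The main obstacle is fact (b): that an accumulation of parallel full segments forces the limiting segment to collapse to a single atom. The mechanism is the upper semicontinuity of $\Dc_K^{PC}$. If $\omega=\lim\omega_k$ with distinct $\omega_k\in\Omega$, then the full-diameter segments $S_{\omega_k}$ converge to $S_\omega$ in the Hausdorff metric; were the atom through some point of $S_\omega$ a proper subcontinuum of $S_\omega$, a small neighborhood of that atom could not absorb the nearby sets $S_{\omega_k}$, contradicting upper semicontinuity. Hence for every $\omega\in\Omega^{*}$ the segment $S_\omega$ is an atom and $\lambda_K\equiv1$ on it. I would argue this carefully against the definition of the core decomposition, since this is precisely where the Sch\"onflies/Peano machinery of \cite{LLY-2019} enters. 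I should also flag that in the degenerate case (iv) the literal identity in (i) is vacuous: there $\lambda_K^{-1}(1)=\emptyset$ whereas $\Hc\cap K$ is a nonempty finite union of segments, so (i) is to be read as the dimension formula valid when $\Omega^{*}\ne\emptyset$, with (iv) recording the genuinely different conclusion.
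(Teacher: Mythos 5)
Your overall route is the same as the paper's: reduce everything to the intercept set $\Omega$, identify $\lambda_K^{-1}(1)$ with the segments lying over accumulation points of $\Omega$, import $\dim_H\Omega=\frac{\log m}{\log N}$ from Theorem \ref{theo:Omega}, and use the shear/product structure to add $1$. You are in fact more explicit than the paper on several points, notably the two-sided sandwich $1+\dim_H\Omega^{*}\le\dim_H\lambda_K^{-1}(1)\le 1+\dim_H\Omega$ and the observation that item (i), read literally, fails in case (iv) (there $\Hc\cap K$ is a nonempty union of segments of dimension $1$ while $\lambda_K^{-1}(1)=\emptyset$); the paper's one-line proof of (i) does not address this.

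The one step where your proposed mechanism does not work as stated is fact (b). Upper semicontinuity of $\Dc_K^{PC}$ alone does not force $S_\omega$ to be an atom: the nearby segments $S_{\omega_k}$ need not themselves be single elements of the decomposition a priori, and if each were split into singletons, a saturated open set around a proper sub-atom of $S_\omega$ could still be produced, so no usc violation arises. What actually forces the collapse is the Peano-compactum condition on the quotient. If $F=\lim_k S_{\omega_k}$ (Hausdorff) is non-degenerate and met two distinct elements of a usc decomposition $\Dc$ with Peano quotient, then $\pi(F)$ would be a non-degenerate continuum; since $\pi(S_{\omega_k})\to\pi(F)$ and the $\pi(S_{\omega_k})$ are distinct components of the quotient, infinitely many components of the quotient would have diameter bounded below, contradicting the null-family requirement. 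Hence $F$ lies in a single element of every admissible $\Dc$, in particular in a single atom, and your fact (b) goes through with this substitution. One further slip, in (iv): when $\Omega$ is finite, $K$ may still have infinitely many segment components (e.g.\ $K(5,\Dc_0)$ of Example \ref{exmp:slope_1}); what is true, and what your argument actually uses, is that only finitely many of them exceed any fixed positive diameter, so $K$ is a Peano compactum and $\lambda_K\equiv 0$.
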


\begin{proof}
 Recall that $\mathcal{G}_N$ is a countable semi-group and that $\Hc$ is invariant under $\mathcal{G}_N$. For any point  $x\in\lambda_K^{-1}(1)$, the component of $K$ containing $x$ is a line segment, say $L_x$; moreover, there is an infinite sequence of line segments $L_j$ whose limit under the Hausdorff distance is a non-degenerate lien segment containing $x$. Thus we can find $g\in\mathcal{G}_N$ such that $g(x)\in\Hc$. This proves item (i). The other items are proved  below.
 
 If $m\ge2$, we can apply Theorem \ref{theo:Omega} to infer that  $\displaystyle\dim_H\Omega=\frac{\log m}{\log N}$. This proves item (ii).

If  $m=1$ and $q\ge1$, then $A_1=\{v_1,\ldots.v_q\}$ contains exactly $q$ points. Let $E_n=\bigcup_{i=0}^{n-1}\Phi^i(A_1)$. Then it is routine to show that $\Omega=\lim\limits_{n\rightarrow\infty}E_n$ consists of $\bigcup_nE_n$  and the only fixed point $\omega_1$ of $h_{u_1}:\bbR\rightarrow\bbR$. This implies that $\Omega$ is a countable set and  $\Hc$ contains at least one infinite sequence of lines, with intercepts $h_{u_1}^n(v_1)$, that converge to the infinite line $l_{\omega_1}: x_2=\omega_1+\tau x_1$.  The equality $\dim_H\lambda_K^{-1}(1)=\dim_HK_c=1$ then follows. This proves item (iii).

Finally, if $m\le1$ and $mq=0$, then $\Omega=A_1$, as given in Observation (b). This then implies that $\Omega+\frac{\bbZ}{s}$ is a uniformly discrete set hence $H$ does not contain  any infinite sequence of straight lines converging to a limit line under Hausdorff distance. This proves item (iv).
\end{proof}

To conclude this section, we give the following.

\begin{exam}\label{ex:(2,1)} 
Let  $K=K(N,\Dc)$ be a fractal square of order $N=7$. See  Figure \ref{fig:slope_1_dim} for the first approximation $K^{(1)}$ (in the left), the second approximation $K^{(2)}$ (in the middle), and the third approximation (in the right). Moreover, we shall have $(m_\Dc,q_\Dc)=(2,1)$.
\begin{figure}[ht]
\begin{center}
\begin{tabular}{ccc}
\includegraphics[width=5.28cm]{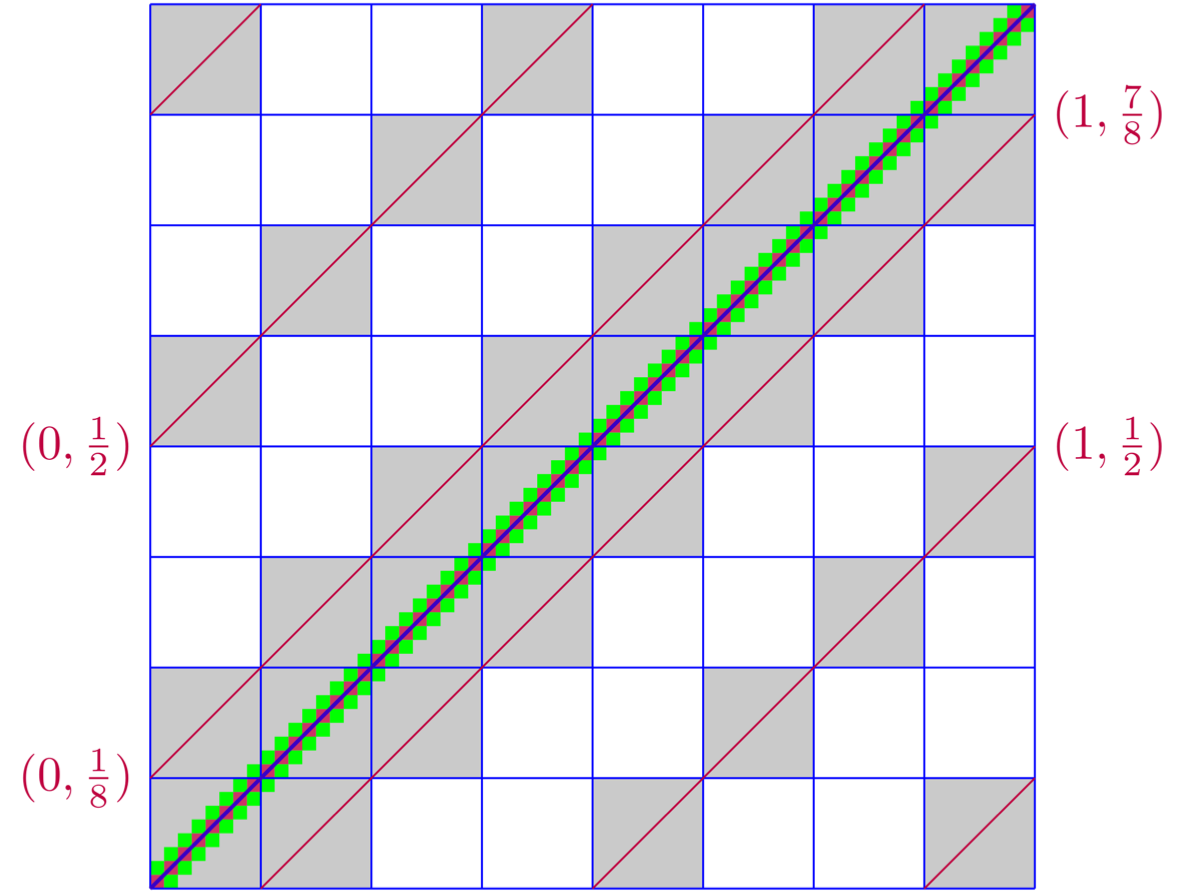}
&\includegraphics[width=4.0cm]{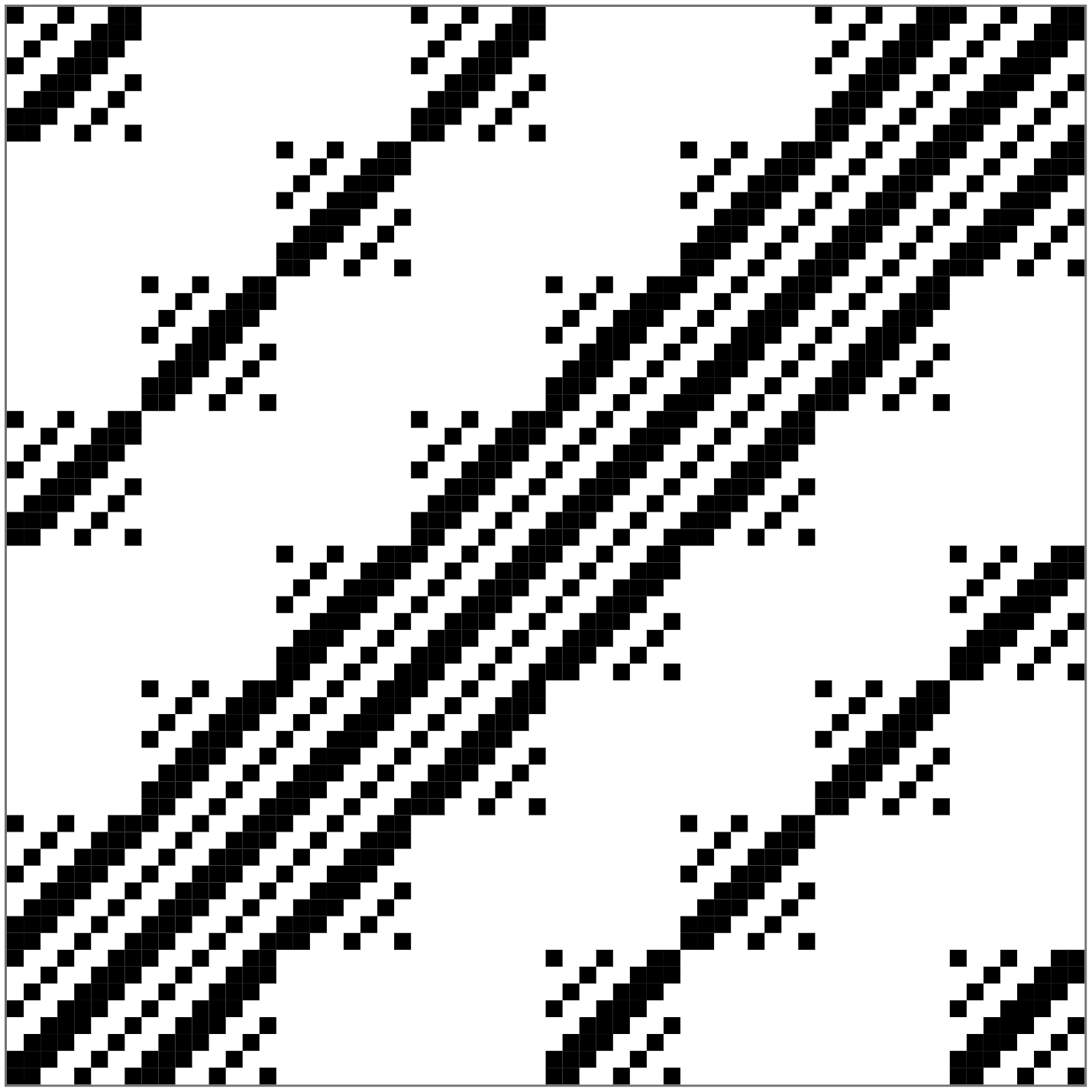}
&\includegraphics[width=4.0cm]{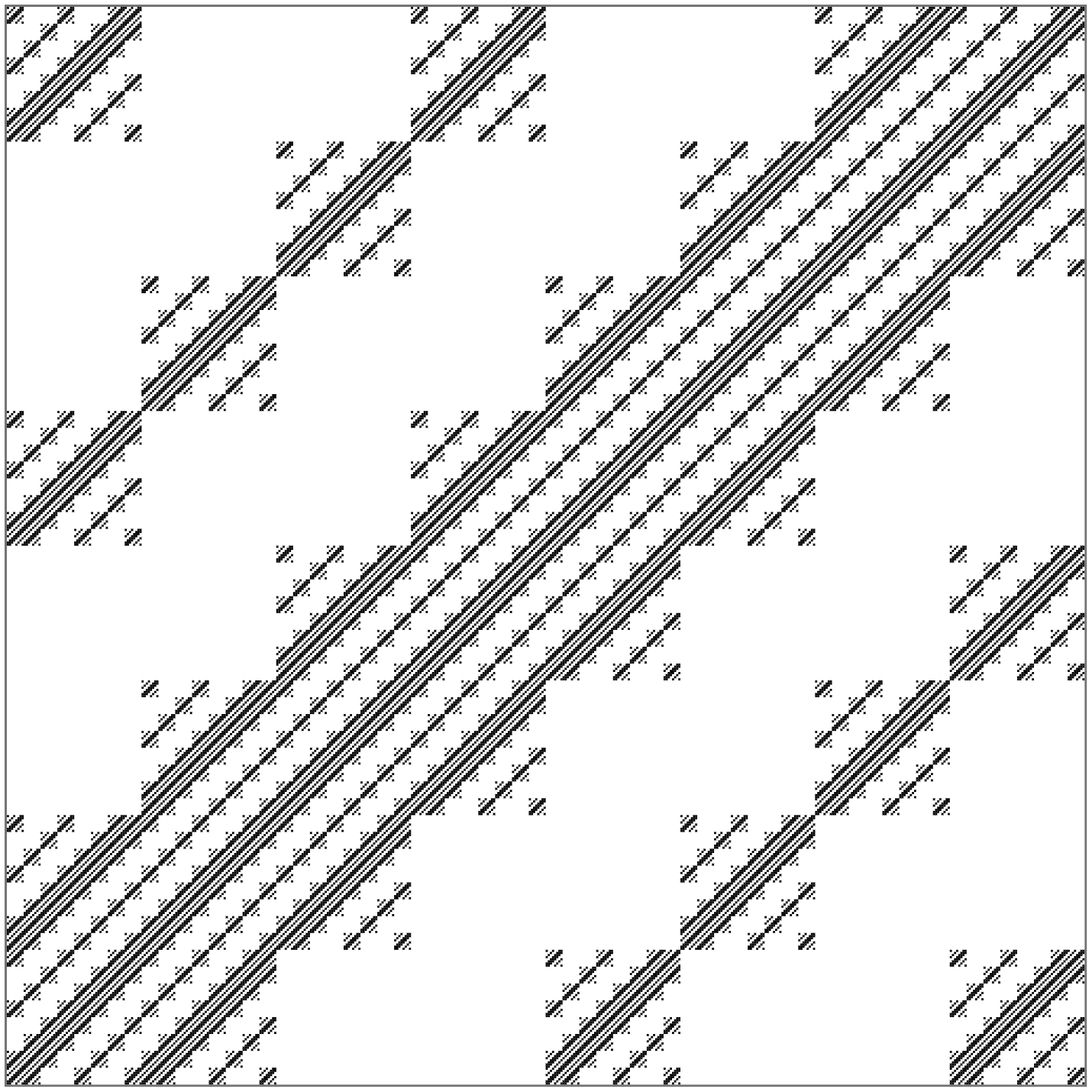}
\end{tabular}
\end{center}
\vspace{-0.5cm}
\caption{A fractal square $K(N,\Dc)$ with $(m,p)=(2,1)$.}\label{fig:slope_1_dim}
\end{figure}
\end{exam}


\section{There are Planar Compacta $L_n(n\ge1)$ with $\lambda_{L_n}(L_n)=\{0,\ldots,n\}$}\label{examples_and_questions}

We construct a sequence of compacta $L_n\subset\bbR^2(n\ge2)$ with two requirements. First,  all the components of $L_n$ are locally connected. Second,  $\lambda_{L_n}(L_n)=\{0,\ldots,n\}$.
We will need  the Cantor ternary set $\mathcal{C}$ and  the components of $[0,1]\setminus\mathcal{C}$, to be denoted by $\{(a_j,b_j): j\ge1\}$. Assume that $a_1=\frac13$ and $b_1=\frac23$. Let $Q$ consist of the boundary of $[0,1]^2$ and the line segments 
$l_k'=\left\{\left(\frac1k,t\right):\  \frac1k\le t\le 1-\frac1k\right\}$ and $l_k''=\left\{\left(1-\frac1k,t\right):\ \frac1k\le t\le 1-\frac1k\right\}$ 
with $k\ge3$. See Figure \ref{key_picture}(a).
\begin{figure}[ht]
\vspace{-0.15cm}
\begin{center}\begin{tabular}{ccc}
\includegraphics[width=4.9cm]{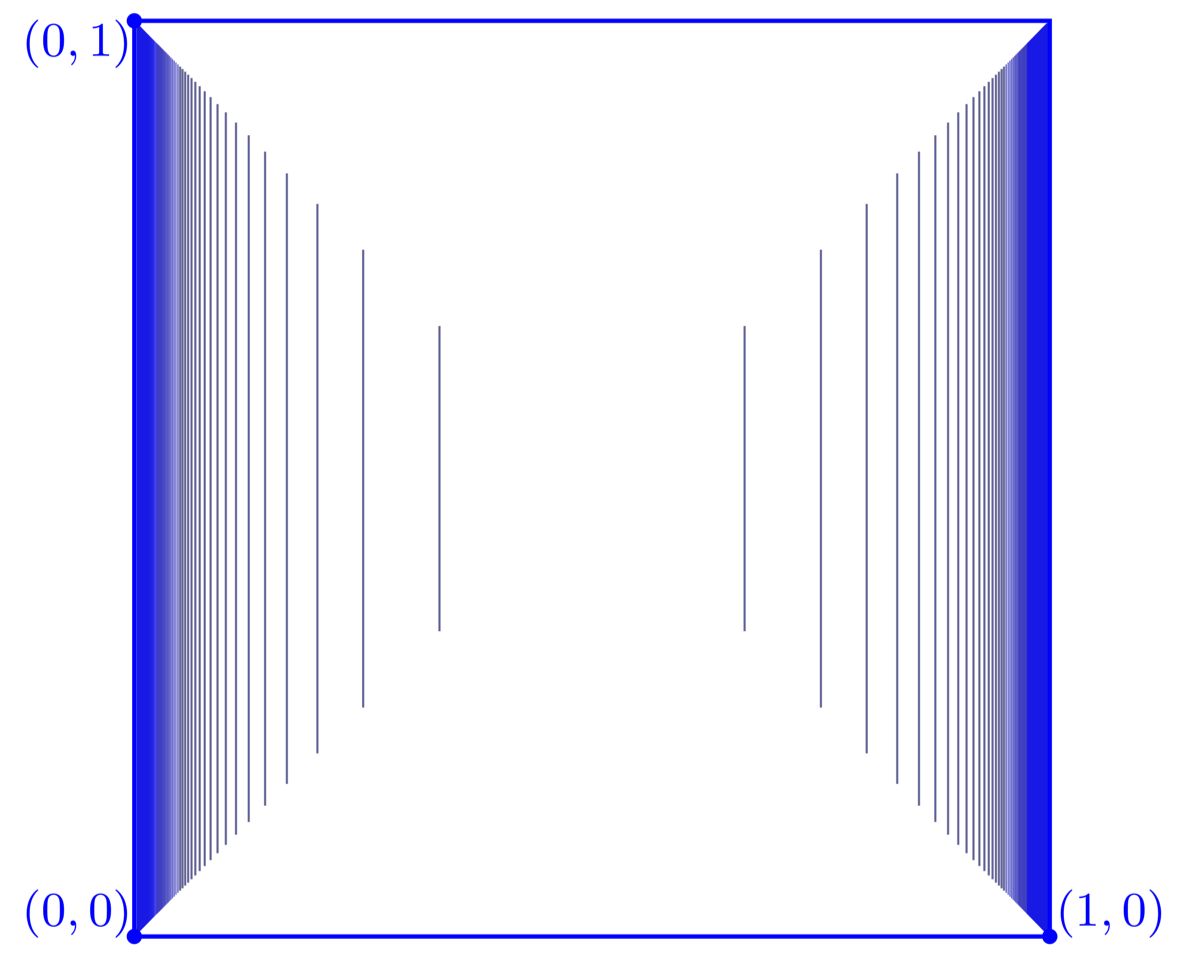} & \includegraphics[width=4.65cm]{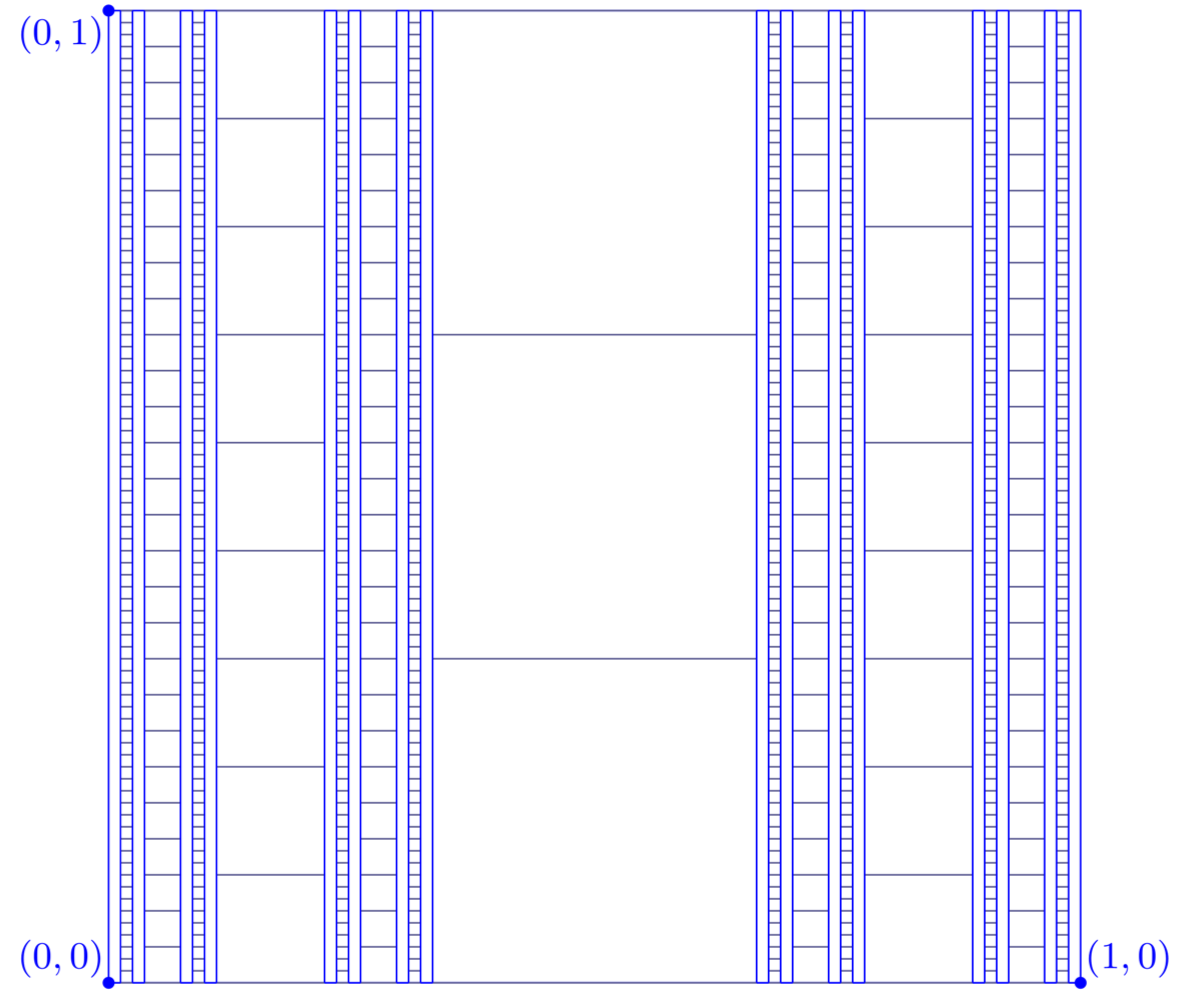} &
 \includegraphics[width=2.5cm]{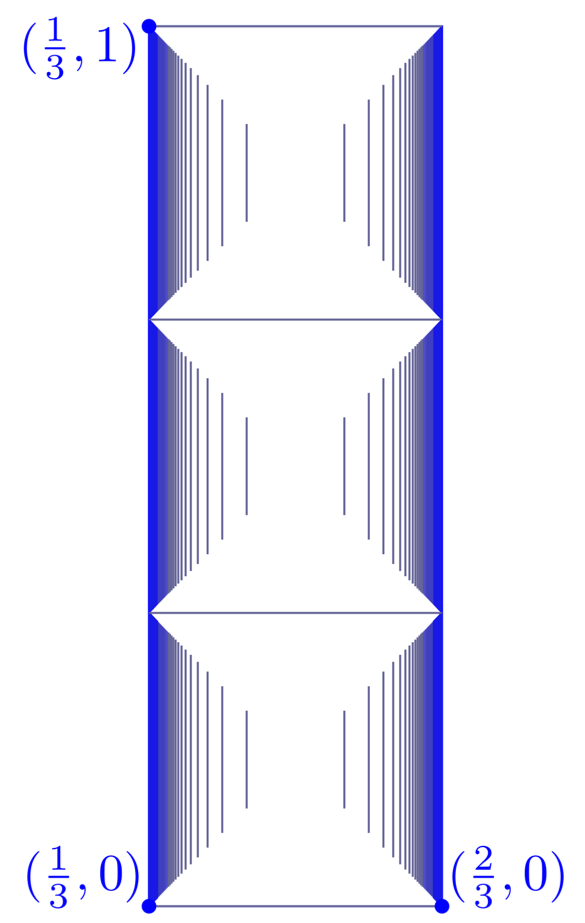}
 \\ (a) & (b) &(c) \end{tabular}
\end{center}
\vspace{-0.5cm}
\caption{(a) the continuum $Q$; (b) the continuum $L_0$; (c) $L_1\bigcap\Big([\frac13,\frac23]\times[0,1]\Big)$.}\label{key_picture}
\end{figure}
Now, for all $j\ge1$ we pick $k_j$ such that $b_j-a_j=3^{-k_j}$ and let  $M_j$ consist of   
all the segments $\displaystyle l_{i,j}=\left\{\left(t,i\cdot3^{-k_j}\right): a_j\le t\le b_j\right\}
$ with $0\le i\le 3^{k_j}$. See  Figure \ref{key_picture}(b). Then, the union of $\mathcal{C}\times[0,1]$ and $\bigcup_{j\ge1}M_j$, to be denoted by $L_0$, is a locally connected continuum. 
For $j\ge1$ and $0\le i\le 3^{k_j}$, set $\displaystyle f_{i,j}(x)=3^{-k_j}x+\left(a_j,i\cdot3^{-k_j}\right)$ and
\[ \Lambda_0=\bigcup_{j\ge1}\bigcup_{i=0}^{3^{k_j}-1}f_{i,j}(Q).
\]
Further set
$\Lambda=\alpha_\infty\cup\left(\bigcup_{p}\alpha_p\right)$ 
where $\alpha_\infty$ is the line segment connecting $(0,1)$ to $(1,1)$ and  $\alpha_p(p\ge1)$ the one connecting $\left(\frac{1}{p+2},1+\frac{1}{p+2}\right)$ to  $\left(\frac{p+1}{p+2},1+\frac{1}{p+2}\right)$. 

\begin{exam}\label{key_example_1}
$L_1=L_0\cup\Lambda_0$ has locally connected components and satisfies $\lambda_{L_1}(L_1)=\{0,1\}$, since the non-degenerate atoms of $L_1$ are exactly the line segments $\{u\}\times[0,1]$ with $u\in\mathcal{C}$. See Figure \ref{key_picture}(c) for a simple depiction of $L_1\bigcap\Big([\frac13,\frac23]\times[0,1]\Big)$.
\end{exam}

\begin{exam}\label{key_example_2}
$L_2=\Lambda\cup L_1$ has locally connected components and satisfies $\lambda_{L_2}(L_2)=\{0,1,2\}$. 
In deed, $L_2$ has just one non-degenerate atom, say $\delta_2$,  which consists $\alpha_\infty$, $\mathcal{C}\times[0,1]$ and 
$[1,2]\times\mathcal{C}$.
The left part of  Figure \ref{Fig13} illustrates $\Lambda\cup\delta_2$. Clearly,  $\lambda_{\delta_2}(\delta_2)=\{0,1\}$ hence $\lambda_{L_2}(L_2)=\{0,1,2\}$. 
\end{exam}
\begin{figure}[ht]
\vspace{-0.382cm}
\begin{center}
\begin{tabular}{ccc}
\includegraphics[width=4.0cm]{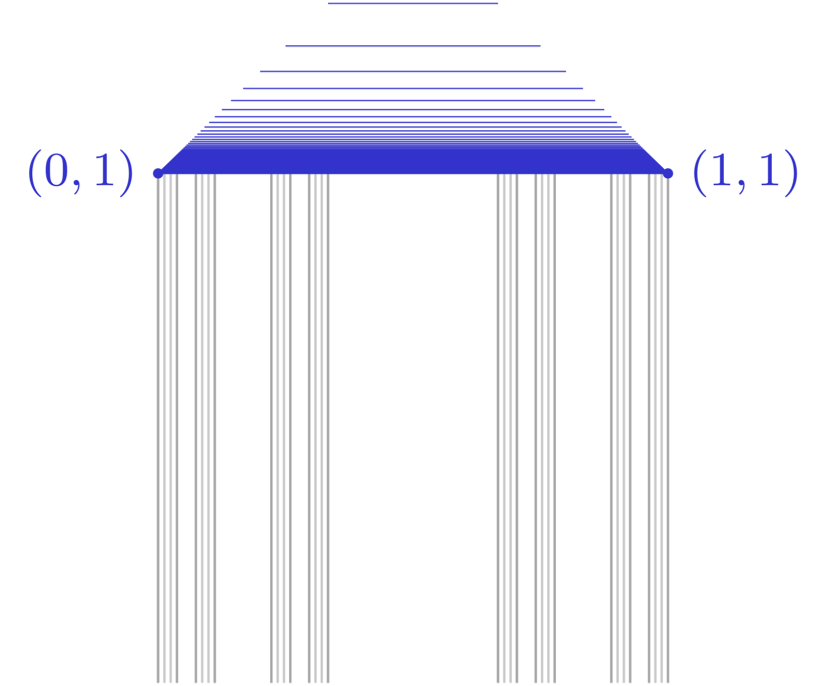}
&
\includegraphics[width=5.0cm]{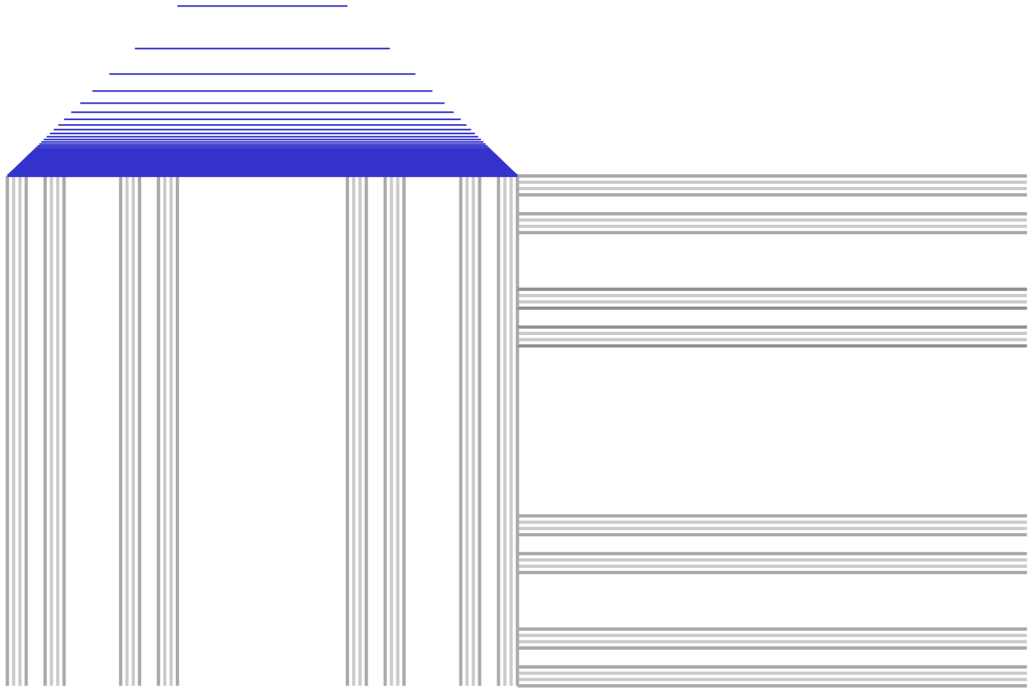}
&
\includegraphics[width=5.0cm]{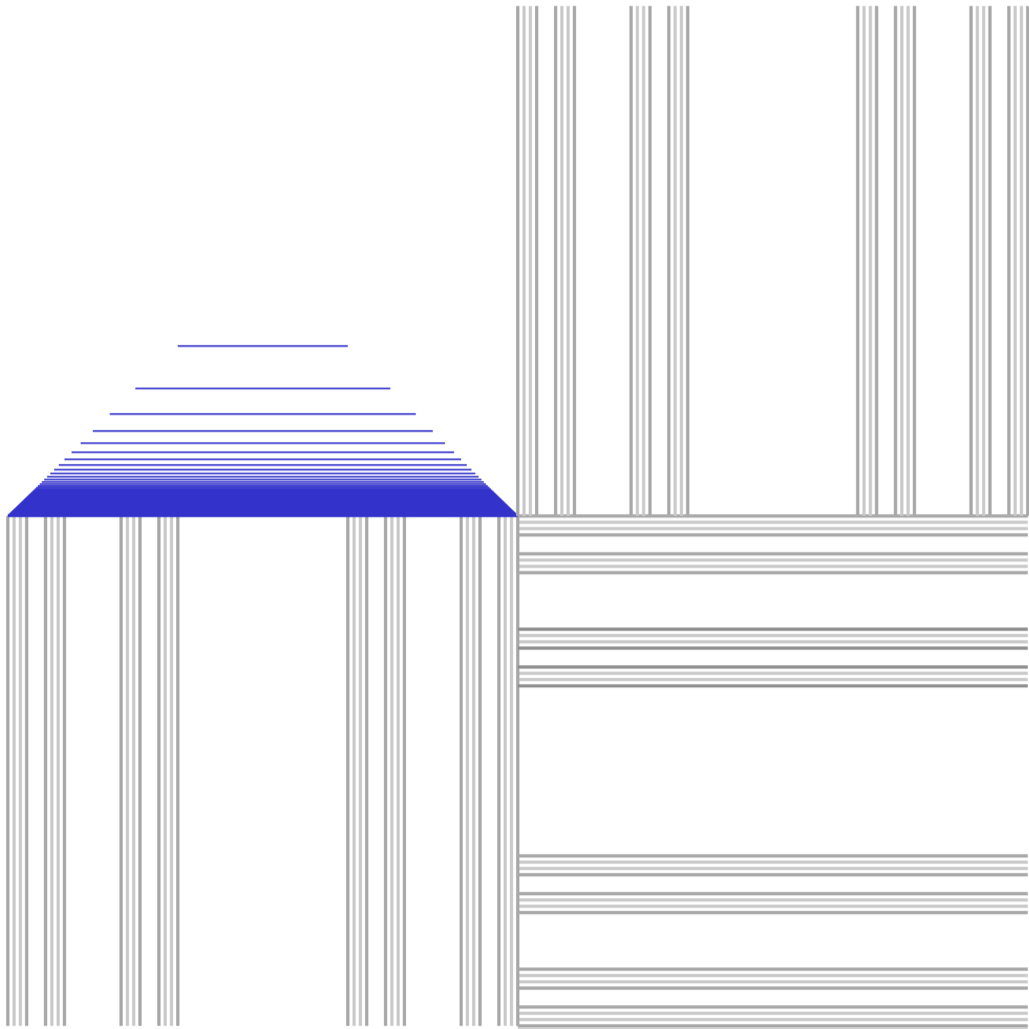}
\\
(a) & (b) &(c)
\end{tabular}
\end{center}
\vspace{-0.382cm}
\caption{(a)  $\Lambda\cup\delta_2$; \quad  (b) $\Lambda \cup\delta_3$; \quad (c) $\Lambda\cup\delta_4$.}\label{Fig13}
\end{figure}

Notice that $L_2=L_1\cup L_1'$, where  $L_1'=\{(u,v): (v,u)\in L_1\}+(1,0)$. Now we can inductively define $L_n(n\ge3)$ in the following way.
If $n=2k$ for  $k\ge1$, let $L_n=L_{2k-1}\cup L_1'+(k-1,k-1)$. If $n=2k+1$ for $k\ge1$, let $L_n=L_{2k}\cup L_1+(k-1,k-1)$.

\begin{exam}\label{key_example}
$L_n(n\ge3)$ has locally connected components and satisfies $\lambda_{L_n}(L_n)=\{0,\ldots,n\}$. The are four basic observations. First,  
$\delta_n\subset\delta_{n+1}$ fro all $n\ge2$. Second, each component of $L_n$ is locally connected. Third, $L_n$ has just one non-degenerate  atom $\delta_n$. The middle and the right parts of Figure \ref{Fig13} illustrate $\Lambda\cup\delta_3$ and $\Lambda\cup\delta_4$.
 Finally,  every atom of $\delta_{n+1}$ is either a point $(u,1)$ with $u\notin\mathcal{C}$, a segment $\{u\}\times[0,1]$ with $u\in\mathcal{C}$, or a continuum congruent to $\delta_n$. A similar sequence has been analyzed in \cite[Example 4.6]{LYY-2019}, with motivations to compute the NLC-scale of certain planar continua. Clearly, $\lambda_{\delta_n}(\delta_n)=\{0,\ldots,n-1\}$ and hence $\lambda_{L_n}(L_n)=\{0,\ldots,n\}$.
\end{exam}


\noindent
{\bf Acknowledgment}. The current study was funded by the National Key R\&D Program of China [No. 2024YFA 1013700] and  the Guangdong Basic and Applied Basic Research Foundation (Grant No. 2021A1515010242). The third named author is grateful to the hospitality of Brigham Young University, for supporting his two visits  in 2024 and 2025. The authors also want to thank Hui RAO and Yuan ZHANG at Huazhong Normal University in Wuhan and Huojun RUAN at Zhejiang University in Hangzhou, for helpful suggestions. All pictures are in png form. The colored ones are drawn in tikz packages. The others are made by a new program, which is recently composed by Greg Conner at Brigham Young University in 2025. 

\bibliographystyle{plain}
\bibliography{biblio-2}

\end{CJK}
\end{document}